\newif\ifmpcps
\def\definetac{\newif\iftac}    
\def\definebeamer{\newif\ifbeamer}
\else\usepackage{amsthm}\fi\fi
\else\usepackage{enumitem}\fi
  \definecolor{darkgreen}{rgb}{0,0.45,0} 
  \definecolor{darkred}{rgb}{0.75,0,0}
  \definecolor{darkblue}{rgb}{0,0,0.6} 
\renewcommand*{\backref}[1]{}
\renewcommand*{\backrefalt}[4]{({%
    \ifcase #1 Not cited.%
          \or On p.~#2%
          \else On pp.~#2%
    \fi%
    })}
\let\setof\Set
\let\ea\expandafter
\def\mdef#1#2{\ea\ea\ea\gdef\ea\ea\noexpand#1\ea{\ea\ensuremath\ea{#2}\xspace}}
\def\alwaysmath#1{\ea\ea\ea\global\ea\ea\ea\let\ea\ea\csname your@#1\endcsname\csname #1\endcsname
  \ea\def\csname #1\endcsname{\ensuremath{\csname your@#1\endcsname}\xspace}}
\DeclareRobustCommand\widecheck[1]{{\mathpalette\@widecheck{#1}}}
\def\@widecheck#1#2{%
    \setbox\z@\hbox{\m@th$#1#2$}%
    \setbox\tw@\hbox{\m@th$#1%
       \widehat{%
          \vrule\@width\z@\@height\ht\z@
          \vrule\@height\z@\@width\wd\z@}$}%
    \dp\tw@-\ht\z@
    \@tempdima\ht\z@ \advance\@tempdima2\ht\tw@ \divide\@tempdima\thr@@
    \setbox\tw@\hbox{%
       \raise\@tempdima\hbox{\scalebox{1}[-1]{\lower\@tempdima\box
\tw@}}}%
    {\ooalign{\box\tw@ \cr \box\z@}}}
\def\foreachletter#1#2#3{\foreachcount=#1
  \ea\loop\ea\ea\ea#3\@alph\foreachcount
  \advance\foreachcount by 1
  \ifnum\foreachcount<#2\repeat}
\def\foreachLetter#1#2#3{\foreachcount=#1
  \ea\loop\ea\ea\ea#3\@Alph\foreachcount
  \advance\foreachcount by 1
  \ifnum\foreachcount<#2\repeat}
\def\definescr#1{\ea\gdef\csname s#1\endcsname{\ensuremath{\mathscr{#1}}\xspace}}
\def\definecal#1{\ea\gdef\csname c#1\endcsname{\ensuremath{\mathcal{#1}}\xspace}}
\def\definebold#1{\ea\gdef\csname b#1\endcsname{\ensuremath{\mathbf{#1}}\xspace}}
\def\definebb#1{\ea\gdef\csname d#1\endcsname{\ensuremath{\mathbb{#1}}\xspace}}
\def\definefrak#1{\ea\gdef\csname k#1\endcsname{\ensuremath{\mathfrak{#1}}\xspace}}
\def\definesf#1{\ea\gdef\csname i#1\endcsname{\ensuremath{\mathsf{#1}}\xspace}}
\def\definebar#1{\ea\gdef\csname #1bar\endcsname{\ensuremath{\overline{#1}}\xspace}}
\def\definetil#1{\ea\gdef\csname #1til\endcsname{\ensuremath{\widetilde{#1}}\xspace}}
\def\definehat#1{\ea\gdef\csname #1hat\endcsname{\ensuremath{\widehat{#1}}\xspace}}
\def\definechk#1{\ea\gdef\csname #1chk\endcsname{\ensuremath{\widecheck{#1}}\xspace}}
\def\defineul#1{\ea\gdef\csname u#1\endcsname{\ensuremath{\underline{#1}}\xspace}}
\def\autofmt@n#1\autofmt@end{\mathrm{#1}}
\def\autofmt@b#1\autofmt@end{\mathbf{#1}}
\def\autofmt@l#1#2\autofmt@end{\mathbb{#1}\mathsf{#2}}
\def\autofmt@c#1#2\autofmt@end{\mathcal{#1}\mathit{#2}}
\def\autofmt@s#1#2\autofmt@end{\mathscr{#1}\mathit{#2}}
\def\autofmt@f#1\autofmt@end{\mathsf{#1}}
\def\autofmt@k#1\autofmt@end{\mathfrak{#1}}
\def\autofmt@u#1\autofmt@end{\underline{\smash{\mathsf{#1}}}}
\def\autofmt@U#1\autofmt@end{\underline{\underline{\smash{\mathsf{#1}}}}}
\def\autofmt@h#1\autofmt@end{\widehat{#1}}
\def\autofmt@r#1\autofmt@end{\overline{#1}}
\def\autofmt@t#1\autofmt@end{\widetilde{#1}}
\def\autofmt@k#1\autofmt@end{\check{#1}}
\def\auto@drop#1{}
\def\autodef#1{\ea\ea\ea\@autodef\ea\ea\ea#1\ea\auto@drop\string#1\autodef@end}
\def\@autodef#1#2#3\autodef@end{%
  \ea\def\ea#1\ea{\ea\ensuremath\ea{\csname autofmt@#2\endcsname#3\autofmt@end}\xspace}}
\def\autodefs@end{blarg!}
\def\autodefs#1{\@autodefs#1\autodefs@end}
\def\@autodefs#1{\ifx#1\autodefs@end%
  \def\autodefs@next{}%
  \else%
  \def\autodefs@next{\autodef#1\@autodefs}%
  \fi\autodefs@next}
\DeclareSymbolFont{bbold}{U}{bbold}{m}{n}
\DeclareSymbolFontAlphabet{\mathbbb}{bbold}
\mdef\delbar{\overline{\partial}}
\let\sm\wedge
\mdef\hf{\textstyle\frac12 }
\mdef\thrd{\textstyle\frac13 }
\mdef\qtr{\textstyle\frac14 }
\newcommand{\op}{^{\mathrm{op}}}
\let\adj\dashv
\newcommand{\pullback}[1][dr]{\save*!/#1-1.2pc/#1:(-1,1)@^{|-}\restore}
\def\frc#1/#2.{\frac{#1}{#2}}   
\mdef\ten{\mathrel{\otimes}}
\mdef\sqten{\mathrel{\boxtimes}}
\let\toot\rightleftarrows
\let\toto\rightrightarrows
\mdef\we{\overset{\sim}{\longrightarrow}}
\mdef\leftwe{\overset{\sim}{\longleftarrow}}
\let\fib\twoheadrightarrow
\let\xto\xrightarrow
\def\rightarrowtailfill@{\arrowfill@{\Yright\joinrel\relbar}\relbar\rightarrow}
\newcommand\xrightarrowtail[2][]{\ext@arrow 0055{\rightarrowtailfill@}{#1}{#2}}
\def\twoheadrightarrowfill@{\arrowfill@{\relbar\joinrel\relbar}\relbar\twoheadrightarrow}
\newcommand\xtwoheadrightarrow[2][]{\ext@arrow 0055{\twoheadrightarrowfill@}{#1}{#2}}
\let\xfib\xtwoheadrightarrow
\def\slashedarrowfill@#1#2#3#4#5{%
  $\m@th\thickmuskip0mu\medmuskip\thickmuskip\thinmuskip\thickmuskip
   \relax#5#1\mkern-7mu%
   \cleaders\hbox{$#5\mkern-2mu#2\mkern-2mu$}\hfill
   \mathclap{#3}\mathclap{#2}%
   \cleaders\hbox{$#5\mkern-2mu#2\mkern-2mu$}\hfill
   \mkern-7mu#4$%
}
\def\rightslashedarrowfill@{%
  \slashedarrowfill@\relbar\relbar\mapstochar\rightarrow}
\newcommand\xslashedrightarrow[2][]{%
  \ext@arrow 0055{\rightslashedarrowfill@}{#1}{#2}}
\mdef\hto{\xslashedrightarrow{}}
\mdef\htoo{\xslashedrightarrow{\quad}}
\def\toiso{\xto{\smash{\raisebox{-.5mm}{$\scriptstyle\sim$}}}}
\long\def\my@drawfill#1#2;{%
\@skipfalse
\fill[#1,draw=none] #2;
\@skiptrue
\draw[#1,fill=none] #2;
}
\newif\if@skip
\newcommand{\skipit}[1]{\if@skip\else#1\fi}
\newcommand{\drawfill}[1][]{\my@drawfill{#1}}
\def\thmqedhere{\expandafter\csname\csname @currenvir\endcsname @qed\endcsname}
  \renewcommand{\theenumi}{(\roman{enumi})}
\mdef\ep{\varepsilon}
\mdef\ph{\varphi}
\let\al\alpha
\let\be\beta
\let\Gm\Gamma
\let\De\Delta
\let\si\sigma
\let\Th\Theta
  \def\qed{\unskip\hskip 1pc\nolinebreak\proofbox}
  \def\qedhere{\unskip\hskip 1pc\nolinebreak\proofbox}
\theoremstyle{plain}		
\theoremstyle{definition}		
\let\C\cC
\let\D\cD
\let\P\cP
\let\T\cT
\let\refl\frefl
\def\J{\ensuremath{\mathsf{J}}}
\def\indeff#1#2#3{
  \begin{quote}
    \noindent \fdata ${#1} : {#2}$ \fwhere
    \@indef #3 \OR\OR
  \end{quote}
}
\def\@indef#1\OR{\ifthenelse{\isempty{#1}}{}{\\\hspace*{3mm} $#1$ \@indef}}
\newcommand{\types}{\vdash}
\renewcommand{\id}[3][]{\fId_{#1}(#2,#3)}
\newcommand{\idover}[4][]{\fId_{#1}(#2,#3)_{#4}}
\let\ap\fap
\newcommand{\ivl}{{\Delta^1}}
\newcommand{\trunc}[2]{\mathopen{}\left\Vert #2\right\Vert_{#1}\mathclose{}}
\newcommand{\brck}[1]{\trunc{}{#1}}
\newcommand{\jdeq}{\equiv}
\newcommand{\blank}{\mathord{\hspace{1pt}\text{--}\hspace{1pt}}}
\newcommand{\opp}[1]{\mathord{{#1}^{-1}}}
\newcommand{\transf}[1]{\ensuremath{{#1}_{*}}\xspace} 
\newcommand{\ct}{%
  \mathchoice{\mathbin{\raisebox{0.5ex}{$\displaystyle\centerdot$}}}%
             {\mathbin{\raisebox{0.5ex}{$\centerdot$}}}%
             {\mathbin{\raisebox{0.25ex}{$\scriptstyle\,\centerdot\,$}}}%
             {\mathbin{\raisebox{0.1ex}{$\scriptscriptstyle\,\centerdot\,$}}}
}
\def\prd#1{\@ifnextchar\bgroup{\prd@parens{#1}}{%
    \@ifnextchar\sm{\prd@parens{#1}\@eatsm}{%
    \@ifnextchar\prd{\prd@parens{#1}\@eatprd}{%
    \@ifnextchar\;{\prd@parens{#1}\@eatsemicolonspace}{%
    \@ifnextchar\\{\prd@parens{#1}\@eatlinebreak}{%
    \@ifnextchar\narrowbreak{\prd@parens{#1}\@eatnarrowbreak}{%
      \prd@noparens{#1}}}}}}}}
\def\prd@parens#1{\@ifnextchar\bgroup%
  {\mathchoice{\@dprd{#1}}{\@tprd{#1}}{\@tprd{#1}}{\@tprd{#1}}\prd@parens}%
  {\@ifnextchar\sm%
    {\mathchoice{\@dprd{#1}}{\@tprd{#1}}{\@tprd{#1}}{\@tprd{#1}}\@eatsm}%
    {\mathchoice{\@dprd{#1}}{\@tprd{#1}}{\@tprd{#1}}{\@tprd{#1}}}}}
\def\@eatsm\sm{\sm@parens}
\def\prd@noparens#1{\mathchoice{\@dprd@noparens{#1}}{\@tprd{#1}}{\@tprd{#1}}{\@tprd{#1}}}
\def\lprd#1{\@ifnextchar\bgroup{\@lprd{#1}\lprd}{\@@lprd{#1}}}
\def\@lprd#1{\mathchoice{{\textstyle\prod}}{\prod}{\prod}{\prod}({\textstyle #1})\;}
\def\@@lprd#1{\mathchoice{{\textstyle\prod}}{\prod}{\prod}{\prod}({\textstyle #1}),\ }
\def\tprd#1{\@tprd{#1}\@ifnextchar\bgroup{\tprd}{}}
\def\@tprd#1{\mathchoice{{\textstyle\prod_{(#1)}}}{\prod_{(#1)}}{\prod_{(#1)}}{\prod_{(#1)}}}
\def\dprd#1{\@dprd{#1}\@ifnextchar\bgroup{\dprd}{}}
\def\@dprd#1{\prod_{(#1)}\,}
\def\@dprd@noparens#1{\prod_{#1}\,}
\def\@eatnarrowbreak\narrowbreak{%
  \@ifnextchar\prd{\narrowbreak\@eatprd}{%
    \@ifnextchar\sm{\narrowbreak\@eatsm}{%
      \narrowbreak}}}
\def\@eatlinebreak\\{%
  \@ifnextchar\prd{\\\@eatprd}{%
    \@ifnextchar\sm{\\\@eatsm}{%
      \\}}}
\def\@eatsemicolonspace\;{%
  \@ifnextchar\prd{\;\@eatprd}{%
    \@ifnextchar\sm{\;\@eatsm}{%
      \;}}}
\def\lam#1{{\lambda}\@lamarg#1:\@endlamarg\@ifnextchar\bgroup{.\,\lam}{.\,}}
\def\@lamarg#1:#2\@endlamarg{\if\relax\detokenize{#2}\relax #1\else\@lamvar{\@lameatcolon#2},#1\@endlamvar\fi}
\def\@lamvar#1,#2\@endlamvar{(#2\,{:}\,#1)}
\def\@lameatcolon#1:{#1}
\def\lamu#1{{\lambda}\@lamuarg#1:\@endlamuarg\@ifnextchar\bgroup{.\,\lamu}{.\,}}
\def\@lamuarg#1:#2\@endlamuarg{#1}
\def\sm#1{\@ifnextchar\bgroup{\sm@parens{#1}}{%
    \@ifnextchar\prd{\sm@parens{#1}\@eatprd}{%
    \@ifnextchar\sm{\sm@parens{#1}\@eatsm}{%
    \@ifnextchar\;{\sm@parens{#1}\@eatsemicolonspace}{%
    \@ifnextchar\\{\sm@parens{#1}\@eatlinebreak}{%
    \@ifnextchar\narrowbreak{\sm@parens{#1}\@eatnarrowbreak}{%
        \sm@noparens{#1}}}}}}}}
\def\sm@parens#1{\@ifnextchar\bgroup%
  {\mathchoice{\@dsm{#1}}{\@tsm{#1}}{\@tsm{#1}}{\@tsm{#1}}\sm@parens}%
  {\@ifnextchar\prd%
    {\mathchoice{\@dsm{#1}}{\@tsm{#1}}{\@tsm{#1}}{\@tsm{#1}}\@eatprd}%
    {\mathchoice{\@dsm{#1}}{\@tsm{#1}}{\@tsm{#1}}{\@tsm{#1}}}}}
\def\@eatprd\prd{\prd@parens}
\def\sm@noparens#1{\mathchoice{\@dsm@noparens{#1}}{\@tsm{#1}}{\@tsm{#1}}{\@tsm{#1}}}
\def\lsm#1{\@ifnextchar\bgroup{\@lsm{#1}\lsm}{\@@lsm{#1}}}
\def\@lsm#1{\mathchoice{{\textstyle\sum}}{\sum}{\sum}{\sum}({\textstyle #1})\;}
\def\@@lsm#1{\mathchoice{{\textstyle\sum}}{\sum}{\sum}{\sum}({\textstyle #1}),\ }
\def\tsm#1{\@tsm{#1}\@ifnextchar\bgroup{\tsm}{}}
\def\@tsm#1{\mathchoice{{\textstyle\sum_{(#1)}}}{\sum_{(#1)}}{\sum_{(#1)}}{\sum_{(#1)}}}
\def\dsm#1{\@dsm{#1}\@ifnextchar\bgroup{\dsm}{}}
\def\@dsm#1{\sum_{(#1)}\,}
\def\@dsm@noparens#1{\sum_{#1}\,}
\def\defthm#1#2#3{%
  \newaliascnt{#1}{thm}
  \newtheorem{#1}[#1]{#2}
  \aliascntresetthe{#1}
  \crefname{#1}{#2}{#3}
}
\def\defrmk#1#2#3{%
  \newaliascnt{#1}{thm}
  \newnumbered{#1}[#1]{#2}
  \aliascntresetthe{#1}
  \crefname{#1}{#2}{#3}
}\fi
\newtheorem{thm}{Theorem}[section]
\crefname{thm}{Theorem}{Theorems}
  \newenvironment{constr}{\begin{proof}[Construction.]}{\end{proof}}
  \newenvironment{constr}{\begin{proof}[Construction]}{\end{proof}}
  \theoremstyle{definition}
  \theoremstyle{remark}
\let\c@equation\c@thm
\numberwithin{equation}{section}
\def\name#1{\ulcorner #1\urcorner}
\let\J\iJ
\let\W\iW
\let\F\cF
\let\N\iN
\let\sec\S
\let\S\cS
\let\P\cP
\def\emptyps{\diamond}
\def\typeps#1#2{\langle #1,#2\rangle}
\def\termps#1#2#3{\llbracket #1,#2,#3\rrbracket}
\def\alg{\text{-}\cAlg}
\def\algf{\alg_{\mathbf{f}}}
\mdef\dId{\mathbb{I}\mathsf{d}}
\def\dtprop{\dT_{\mathsf{Prop}}}
\mdef\Mf{\sM_{\mathbf{f}}}
\mdef\fibm{(\sM,\cF)}
\mdef\fibmf{(\Mf,\cF_{\mathbf{f}})}
\mdef\fibmbang{(\sM,\cF_!)}
\mdef\fibmfbang{(\Mf,\cF_{\mathbf{f},!})}
\let\C\cC
\let\T\cT
\let\r\ir
\def\inl{\mathsf{inl}}
\def\inr{\mathsf{inr}}
\def\zero{\mathsf{zero}}
\def\succ{\mathsf{succ}}
\def\nrec{\mathsf{nrec}}
\def\sup{\mathsf{sup}}
\def\fold{\mathsf{fold}}
\def\wrec{\mathsf{wrec}}
\def\tr{\mathsf{tr}}
\def\treq{\mathsf{treq}}
\def\trrec{\mathsf{trrec}}
\let\Id\fId
\let\G\cG
\def\dG{\widehat{\G}}
\let\type\fibtype
\def\drefl{\refl'}
\renewcommand{\idover}[4][]{\fId_{#1}^{#4}(#2,#3)}
\newcommand{\Idtwo}[2]{\widetilde{\Id^{#1}_{#2}}}
\newcommand{\rtwo}[2]{\widetilde{\r^{#1}_{#2}}}
\newcommand{\idovertwo}[4][]{\widetilde{\fId^{#4}_{#1}}(#2,#3)}
\newcommand{\aptwo}[1]{\widetilde{\ap_{#1}}}
\tikzset{idmap/.style={double equal sign distance,-}}
\def\pullback#1{\ar[#1,phantom,near start,"\lrcorner"]}
\tikzset{commutative diagrams/cof/.style={tail}}
\tikzset{commutative diagrams/fib/.style={two heads}}
\tikzset{commutative diagrams/acyc/.style={"\sim" {sloped,above}}}
\tikzset{commutative diagrams/acyc'/.style={"\sim" {sloped,below}}}
\title{Semantics of higher inductive types}
\newcommand{\thankstext}{This material is based on research sponsored by The United States Air Force Research Laboratory under agreement number FA9550-15-1-0053.  The U.S. Government is authorized to reproduce and distribute reprints for Governmental purposes notwithstanding any copyright notation thereon.  The views and conclusions contained herein are those of the author and should not be interpreted as necessarily representing the official policies or endorsements, either expressed or implied, of the United States Air Force Research Laboratory, the U.S. Government, or Carnegie Mellon University.
  The research presented here was also partly funded by the Swedish Research Council (VR) Grant 2015-03835 \emph{Constructive and category-theoretic foundations of mathematics}.}
  \author[Peter LeFanu Lumsdaine and Michael Shulman]{PETER LEFANU LUMSDAINE \and\ MICHAEL SHULMAN\thanks{\thankstext}}
  \author{Peter LeFanu Lumsdaine}
  \author{Michael Shulman}
  \thanks{\thankstext}
\begin{document}

\maketitle

\ifmpcps
\else
 \vspace{-1.7\baselineskip} 
\fi

\begin{abstract}
  \emph{Higher inductive types} are a class of type-forming rules, introduced to provide basic (and not-so-basic) homotopy-theoretic constructions in a type-theoretic style.
  They have proven very fruitful for the ``synthetic'' development of homotopy theory within type theory, as well as in formalizing ordinary set-level mathematics in type theory.
  In this article, we construct models of a wide range of higher inductive types in a fairly wide range of settings.
  
  We introduce the notion of \emph{cell monad with parameters}: a semantically-defined scheme for specifying homotopically well-behaved notions of structure.
  We then show that any suitable model category has \emph{weakly stable typal initial algebras} for any cell monad with parameters.
  When combined with the local universes construction to obtain strict stability, this specializes to give models of specific higher inductive types, including spheres, the torus, pushout types, truncations, the James construction, and general localisations.

  Our results apply in any sufficiently nice Quillen model category, including any right proper, simplicially locally cartesian closed, simplicial Cisinski model category (such as simplicial sets) and any locally presentable locally cartesian closed category (such as sets) with its trivial model structure.
  In particular, any locally presentable locally cartesian closed $(\infty,1)$-category is presented by some model category to which our results apply.
\end{abstract}

\tableofcontents

\section{Introduction}
\label{sec:introduction}

Higher inductive types are a recent innovation in dependent type theory.
They were originally motivated by the homotopical interpretations of type theory~\cite{aw:htpy-idtype,klv:ssetmodel,hottbook}, as a way to construct types corresponding to homotopy-theoretic cell complexes such as spheres, tori, and so on.

For instance, a circle can be constructed as a topological cell complex with one point (0-cell) and one path (1-cell) with both endpoints glued to the point.
The homotopy-type-theoretic view of paths as corresponding to elements of identity types suggests considering a ``circle type'' $S^1$ that is ``inductively generated'' by a point $\mathsf{base}:S^1$ and an identity $\mathsf{loop}: \id[S^1]{\mathsf{base}}{\mathsf{base}}$.
Similarly, a torus can be constructed as a cell complex with one point, two paths, and one disc (2-cell) whose boundary traverses the paths in one order on one side and the other order on the other side; this suggests a ``torus type'' $T^2$ that is inductively generated by one point $\mathsf{base}:T^2$, two identities $\mathsf{left},\mathsf{right}:\id[T^2]{\mathsf{base}}{\mathsf{base}}$, and an ``iterated identity'' $\mathsf{sq} : \id[{\id[T^2]{\mathsf{base}}{\mathsf{base}}}]{\mathsf{left}\ct\mathsf{right}}{\mathsf{right}\ct\mathsf{left}}$ (where $\ct$ denotes the concatenation of identities).

Although it may seem odd to consider ``inductive constructors'' that take values not in an inductive type itself but in its identity types, it is not hard to write down rules for such types in the usual dependent-type-theory style of formation, introduction, elimination, and computation principles.
For instance, in the case of $S^1$, the terms $\mathsf{base}$ and $\mathsf{loop}$ are the introduction rules, while the elimination rule says that to define a section $x:S^1 \types f(x) : P(x)$ of a type family over $S^1$ it suffices to give a point $f(\mathsf{base}):P(\mathsf{base})$ and a lift of the loop $\ap_{f}(\mathsf{loop}) : \idover[P]{f(\mathsf{base})}{f(\mathsf{base})}{\mathsf{loop}}$ (see \cref{sec:coherence-pushouts} for the meaning of the notation), and the computation rules say that $f$ evaluates on $\mathsf{base}$ and $\mathsf{loop}$ to the given data.

The above examples are ``non-recursive'' higher inductive types, analogous to binary sum types and the empty type, considered as ordinary inductive types.
The ``most general'' non-recursive higher inductive type, in the sense that all others can be constructed from it, is a \emph{(homotopy) pushout}: given two functions $f_1:A\to B_1$ and $f_2:A\to B_2$, their pushout is a type $D$ inductively generated by maps $\nu_1:B_1\to D$ and $\nu_2:B_2\to D$ and a homotopy $\mu:\prod_{x:A} \id[D]{\nu_1(f_1(x))}{\nu_2(f_2(x))}$.
In particular, from pushouts we can construct all other finite homotopy colimits (and also some infinite ones, using the natural numbers type).

We can also consider higher inductive types that do have recursive constructors.
For instance, in homotopy type theory a \emph{proposition} is defined to be a type any two elements of which are equal, i.e.\ such that there is a term in $\prod_{x,y:A} \id[A]xy$.
The \emph{propositional truncation} of an arbitrary type $A$ is a proposition $\brck{A}$ with a map from $A$ that is ``universal''; it can be regarded as the higher inductive type generated by one ordinary constructor $\tr:A\to\brck{A}$ and one higher constructor $\treq : \prod_{x,y:\brck{A}}\id[\brck{A}]{x}{y}$.
(The latter is ``recursive'' because the inputs $x,y$ belong to the type $\brck{A}$ being constructed.)
Other recursive higher inductive types include the \emph{$n$-truncation} for any $n$ (a universal map into a homotopy $n$-type --- the propositional truncation is the case $n=-1$) and the \emph{localization} at a family of morphisms $\{f_i\}_{i\in I}$ (the universal map into a type that ``sees each $f_i$ as an isomorphism'').

In particular, the $0$-truncation of a pushout of $0$-types (types satisfying Uniqueness of Identity Proofs, a.k.a.\ ``sets'') is a pushout in the category of $0$-types.
If we also assume Voevodsky's univalence axiom (or just its restriction to propositions), we can show that the latter category is a pretopos; see~\cite[Chapter 10]{hottbook}.
In particular, one has exact set-quotients of equivalence relations, eliminating the need for the cumbersome method of ``setoids''.
Recursive higher inductive $0$-types (also known as ``quotient inductive types''~\cite{acdf:qiits}) have other applications, such as constructing free algebras for arbitrary algebraic theories (which is not possible in a general elementary topos \cite{blass:freealg}) and representing type theory inside itself~\cite{ak:tt-qit}.
Thus, higher inductive types add power even for the representation of ordinary set-based mathematics inside type theory.

A general discussion of the syntax of higher inductive types (though not going so far as to propose a general definition) can be found in~\cite[Chapter 6]{hottbook}.
They have succeeded admirably at their original goal of providing types in homotopy type theory that behave like cell complexes in classical homotopy theory (see~\cite[Chapter 8]{hottbook} and also more recent work).
Interestingly, although the original motivation came from inspecting small concrete cell complexes such as circles and tori, the more complicated recursive higher inductive types (such as $n$-truncation and localization) also correspond to constructions in classical homotopy theory (such as Postnikov towers and Bousfield localization) that are usually performed using cell complexes, albeit transfinite and often unwieldy ones.
This yields a strong intuition that higher inductive types are somehow a \emph{type-theoretic replacement for cell complexes}.
However, there is some distance from this intuition to a formal proof that higher inductive types actually \emph{exist} in homotopical models of type theory.

The goal of the present paper is to fill this gap.
The basic ideas were known to the authors several years ago, and circulated informally in the community, but the results have not been written out precisely until now.

Roughly speaking, there are the following issues to overcome in constructing semantics for higher inductive types.
\begin{enumerate}
\item When interpreting type theory in a model category, types are represented by \emph{fibrant} objects.
  But cell complexes are built out of colimits, which do not preserve fibrancy; thus we need to ``fibrantly replace'' them.\label{item:p1}
\item The usual constructions of homotopy theory, including cell complexes and fibrant replacement, produce the right answer only up to homotopy equivalence.
  But the induction principle of a higher inductive type is somewhat stricter than this (though it falls short of determining it up to strict isomorphism).\label{item:p2}
\item Everything in type theory is stable under substitution, so in order to be type-theoretically meaningful a category-theoretic construction must be stable under pullback (at least ``weakly stable'', as in~\cite{lw:localuniv}).
  But fibrant replacement is not in general stable under pullback, even weakly.\label{item:p3}
\item The constructors of an ordinary inductive type are unrelated to each other.
  But in a higher inductive type each constructor must be able to refer to the previous ones; specifically, the source and target of a path-constructor generally involve the previous point-constructors.
  No syntactic scheme has yet been proposed for this that covers all cases of interest while remaining meaningful and consistent, so the ``target'' of an interpretation theorem is not even precisely defined.\label{item:p4}
\end{enumerate}
As we will see, problem~\ref{item:p2} only arises in the recursive case; but on the other hand, it is not specific to \emph{higher} inductive types, being a problem even for interpreting ordinary recursive inductive types such as natural numbers and \W-types.

We present (in \cref{sec:cell-monads-param}) a general context that solves problems~\ref{item:p1}--\ref{item:p3}, and partially solves~\ref{item:p4} in the following sense.
We describe a \emph{semantic} construction (a ``cell monad'') that enables subsequent ``constructors'' to refer to previous ones, and determines a precise \emph{semantic} meaning for what the source and target of a path-constructor can be.
However, although in particular cases it is easy to see that the \emph{syntactic} source and targets in the specification of a higher inductive type give rise to the desired semantic structure, we do not give a general syntactic characterization of allowable ``source and target terms''.\footnote{In~\cite{acdf:qiits} a similar semantic characterization is internalized literally, using universes, under the assumption of UIP; but with present technology this does not generalize to type theory without UIP, since $(\infty,1)$-categories are not known to be definable internally.}
Moreover, there are some kinds of source and target terms that seem natural syntactically but do not fit into our semantic framework.

For expositional clarity, instead of jumping right to the general context of cell monads, we begin (after preliminaries in \cref{sec:model-categ-type} and a warm-up in \cref{sec:coproducts}) in \crefrange{sec:pushouts}{sec:pushouts-type-theory} by considering in detail the case of pushouts.
These are subject only to problems~\ref{item:p1} and~\ref{item:p3}.
We solve~\ref{item:p1} by a simple ordinary fibrant replacement, and \ref{item:p3} by pulling back separately along fibrations (which are easy) and acyclic cofibrations (using a pushout-product argument).

In \cref{sec:natural-numbers} we consider the natural numbers type, which is of course simpler because it lacks any path-constructors, but is recursive and thus subject to problems~\ref{item:p1}--\ref{item:p3}.
We solve problem~\ref{item:p2} by using an \emph{algebraic fibrant replacement}, which retains better control over the point-set-level behavior than an ordinary fibrant replacement, in particular enabling us to prove the type-theoretic universal property.
However, the simple pushout-product argument for~\ref{item:p3} that applies in the non-recursive case is no longer applicable in the recursive case.
Thus, to prove weak stability for natural numbers, we alter the usual interpretation of type theory a little by requiring \emph{contexts} as well as types to be fibrant (this turns out to be true anyway \emph{a posteriori} in the usual interpretation, it's just not usually assumed explicitly at the outset).
This allows us to ignore the case of pullback along acyclic cofibrations.

In \cref{sec:w-types} we consider \W-types, which add only one wrinkle to natural numbers: since they have nontrivial parameters, we have to be able to pull back along acyclic cofibrations in addition to along fibrations.
But requiring contexts to be fibrant still solves the problem, with some extra work: it ensures that acyclic cofibrations have retractions, which together with an ``extension lemma'' enables us to reduce an elimination problem over the domain to one over the codomain.

These specific cases, while admittedly rather specific, are by themselves sufficient for a large fraction of current homotopy type theory and synthetic homotopy theory.
In particular, Kraus~\cite{kraus:nonrec-hit} and van Doorn~\cite{vandoorn:proptrunc-nonrec} have shown that from pushouts and the natural numbers one can construct the propositional truncation, while Rijke~\cite{rijke:join,rijke:omegaloc-talk} has shown that one can even construct all $n$-truncations along with some localizations, and some have even conjectured that all higher inductive types might be similarly constructible.
To motivate the need for further work, therefore, in \cref{sec:blass} we describe a specific higher inductive type that definitely \emph{cannot} be constructed in terms of pushouts and natural numbers.

As one final warm-up for the general case, in \cref{sec:prop-trunc} we discuss the propositional truncation.
This is special in many ways, but still exhibits the one remaining basic new idea in our general approach, namely \emph{algebraic pushouts of monads}.
A sequence of such pushouts along ``generating monad cells'' yields what we call a \emph{cell monad}, since it is a precise analogue of a cell complex, only built \emph{in the category of monads}.
Each generating monad cell corresponds to one constructor of a higher inductive type; thus we might say that higher inductive types replace the large unwieldy cell complexes of classical algebraic topology with small manageable ``cell complexes'' constructed at a higher level of abstraction.

In \cref{sec:cell-monads} we describe cell monads for higher inductive types without parameters, and in \cref{sec:cell-monads-param} we generalize to the parametrized case.
Most of the work in these sections is just in setting up a sufficiently general context in which to prove the theorems.
As mentioned above, we do not have a syntactic context of equal generality; but we explain heuristically, with many examples, how to perform a syntax-to-semantics translation in particular cases.
We also discuss some limitations of our construction; most importantly, the fact that source and target terms cannot contain ``fibrant structure'' such as eliminations from other inductive and higher inductive types.
This includes concatenation of paths, as used for instance in the standard presentaiton of the torus; however, for this and many other examples, we show how to work around the issue by using more general cofibrations.
In \cref{sec:summary} we summarize our results for quick reference.

\begin{rmk}
  Since type-theoretic laws hold up to strict equality, whereas categorical constructions are well-behaved only up to isomorphism (or homotopy), the standard way to interpret dependent type theory into categories involves an intermediate ``strictification'' step.
  For this we will use the \emph{local universes coherence method}~\cite{lw:localuniv}, which replaces any well-behaved comprehension category by a split one in such a way that any ``weakly stable'' structure possessed by the original becomes ``strictly stable'' structure in the splitting.
  In \cref{sec:coproducts,sec:w-types} we use results from~\cite{lw:localuniv} verbatim, while elsewhere we have to prove the analogous theorems ourselves; this is generally straightforward, although it is worth noting that the ``cofibrancy'' of a cell monad plays a role not only in the weak stability of its homotopy-initial algebras but also the ``representability'' of its structures, both of which are necessary for the local universes theorem.

  Once such a strict algebraic structure has been built from a desired categorical model, the interpretation is completed by constructing the \emph{initial} such algebraic structure out of the syntax of type theory, so that there is a unique structured map interpreting all the syntax into the desired model.
  Such an initiality theorem was suggested by~\cite{cartmell:gatcc}, and proven thoroughly by~\cite{streicher:semtt} for a specific non-toy type theory, the Calculus of Constructions.
  Such proofs are too long and bureaucratic for anyone to want to write or read in detail, but opinions differ as to whether they are straightforward enough that initiality for other type theories (such as those considered here) may be considered established.
  We take no position on this question; our only goal is to construct split comprehension categories with the appropriate strictly stable structure to model higher inductive types, and so we will simply refer to these as ``models of type theory''.
\end{rmk}

Our perspective on higher inductive types can be summarized as follows.
It is well-known that ordinary inductive types can be described as initial algebras for polynomial endofunctors (and this remains true in homotopy type theory~\cite{ags:it-hott}).
An algebra for an endofunctor $F$ (in the endofunctor sense, i.e.\ an object $X$ with a map $F X \to X$) is the same as an algebra (in the monad sense) for the \emph{algebraically-free monad} generated by $F$; thus ordinary inductive types are equivalently initial algebras for \emph{free monads on polynomial endofunctors}.
Since (unlike an endofunctor) a monad always has an initial algebra, the existence of inductive types is really about the existence of such free monads themselves.

Higher inductive types generalize from \emph{free} monads to \emph{presented} monads, i.e.\ monads constructed as an \emph{algebraic colimit} of free monads on polynomial endofunctors.
(A colimit of monads is ``algebraic'' if an algebra structure for the colimit monad is determined by compatible algebra structures for the monads in the input diagram.)
Intuitively, a ``cell monad'' is one with such a presentation that is ``homotopically meaningful'', i.e.\ a ``homotopy colimit of monads''.
This suggests a way to say what it means for a category (or $(\infty,1)$-category) to ``have higher inductive types'': that algebraic colimits of algebraically-free monads on polynomial endofunctors exist (with all these words interpreted 1-categorically or $(\infty,1)$-categorically as appropriate).
This is known to be true for locally presentable 1-categories, and ought to be true for locally presentable $(\infty,1)$-categories as well; but it is not immediately obvious how to make it precise in an ``elementary'' way.

While our general construction achieves a lot, there is also still a lot left to do.
Open problems include the following.
\begin{itemize}
\item Our type theory does not contain any universes.
  In some respects this is a feature rather than a bug, since it makes the semantics more general: our model categories include all locally presentable locally cartesian closed 1-categories and suffice to present all locally presentable locally cartesian closed $(\infty,1)$-categories.
  Of course, if a model category does have universes, such as simplicial sets~\cite{klv:ssetmodel} and others constructed from it~\cite{shulman:invdia,shulman:elreedy,cisinski:elegant,shulman:eiuniv}, then higher inductive types are more powerful due to ``large eliminations'', which are used frequently in synthetic homotopy theory.

  However, there is an additional desirable feature that our method does not produce even in models that have universes.
  One hopes to have universes that are \emph{closed under} parametrized higher inductive types, for instance that if $A,B_1,B_2$ all lie in some universe then so does some pushout of any $f_1:A\to B_1$ and $f_2:A\to B_2$.
  But our method does not give this, because it involves fibrant replacement, which does not preserve smallness of fibers.
  (The different ``cubical'' methods of~\cite{chm:cubical-hits} do appear to produce universes closed under higher inductive types.)
\item As mentioned above, although our method allows very general ``source and target terms'', it does not allow ``fibrant structure'' therein, such as path-concatenation and eliminations from inductive types.
  While we can work around this in many cases, it would be better to be able to drop the restriction.
\item Of course, we would like a general syntactic scheme for higher inductive types, and a general theorem interpreting such syntax using our general semantics, rather than having to treat every case manually.
\item Finally, there are various generalizations of higher inductive types that we have not addressed, such as indexed higher inductive families, higher inductive-inductive types (e.g.~\cite[Chapter 11]{hottbook} and~\cite{ak:tt-qit}), and higher inductive-recursive types (e.g.~\cite{shulman:hiru-tdd}).
  A starting point would be to unify our approach with that of~\cite{acdf:qiits}.
\end{itemize}

\section{Good model categories}
\label{sec:model-categ-type}

We will work throughout in the following contexts.

\begin{defn}
  A \textbf{good model category} is a model category \sM with the following additional properties.
  \begin{enumerate}
  \item \sM is simplicial.\label{item:m0}
  \item Every monomorphism in \sM is a cofibration.\label{item:m1}
  \item Cofibrations in \sM are stable under arbitrary limits.\label{item:m1a}
  \item \sM is right proper, i.e.\ weak equivalences are preserved by pullback along fibrations.\label{item:m2}
  \item \sM is locally cartesian closed, as a simplicial category.\label{item:m3}
  \end{enumerate}
  An \textbf{excellent model category\footnote{Note that this nonce definition is different from the similarly-named~\cite[Definition A.3.2.16]{lurie:higher-topoi}, though it shares some of the same conditions.}} is a good model category that is in addition combinatorial, i.e.\ it satisfies the following further properties.
  \begin{enumerate}[resume]\ifmpcps\setcounter{enumi}{5}\fi
  \item \sM is cofibrantly generated.
  \item As a category, \sM is locally presentable.
  \end{enumerate}
\end{defn}

\begin{eg}
A \emph{Cisinski model category}~\cite{cisinski:topos,cisinski:presheaves} is a cofibrantly generated model structure on a Grothendieck topos whose cofibrations are the monomorphisms.
Therefore, any right proper, simplicially locally cartesian closed, simplicial Cisinski model category is an excellent model category.
Cisinski~\cite{cisinski:lccc-rpcmc} and Gepner--Kock~\cite{gk:univlcc} have shown that any locally presentable locally cartesian closed $(\infty,1)$-category (and in particular any Grothendieck $(\infty,1)$-topos~\cite{lurie:higher-topoi}) can be presented by such a model category.
\end{eg}

\begin{eg}
Any complete and cocomplete locally cartesian closed category with its trivial model structure (the weak equivalences are the isomorphisms and all morphisms are cofibrations and fibrations) is also a good model category.
If it is locally presentable, then it is an excellent model category.
Thus, we also include the standard examples of semantics for extensional type theory.
\end{eg}

\begin{noneg}
The category of groupoids, which was the first homotopical model of type theory~\cite{hs:gpd-typethy}, satisfies all the axioms of a good model category except that it is not locally cartesian closed: only fibrations of groupoids are exponentiable.
It is possible that with some care our method could be generalized to such examples, at the expense of increased awkwardness (for instance, \cref{thm:free-fibred} would no longer be true exactly as stated).
\end{noneg}

\begin{rmk}
The first version of this paper omitted the condition that good model categories should be \emph{simplicially} locally cartesian closed, required for copowers to be stable under pullback; see \cite{MO:enriched-cccs} and followups.    
\end{rmk}

Assumption~\ref{item:m0} tells us that \sM is simplicially enriched, with powers and copowers\footnote{Also known as cotensors and tensors, respectively.} satisfying the pushout-product and pullback-corner axioms, and (by assumption~\ref{item:m3}) preserved by pullbacks.
This enables us to construct path objects (which model identity types) as simplicial powers.
Specifically, if $\ivl$ denotes the 1-simplex (the simplicial interval), then for any fibrant object $A$, the power $A^\ivl$ is a path-object for $A$.
Likewise, if $p:A\fib \Gamma$ is a fibration, then the ``fibred power'' $A^\ivl_\Gamma = A^\ivl \times_{\Gamma^\ivl} \Gamma$ is a path-object for $A$ regarded as a type over $\Gamma$; see \cref{thm:stable-id}.

This is important for us because it means that homotopies can be represented in adjoint form: a map $A\to B^\ivl$ is equivalent to a map $A\ten \ivl \to B$, where $A\ten \ivl$ is the simplicial copower.
Note that like other colimits, the simplicial copower of $p:A\fib \Gamma$ in the slice category over $\Gamma$ is just $A\ten\ivl$ with the projection $A\ten\ivl \to A \to \Gamma$.
Moreover, each slice category of \sM is a simplicial model category, and pullback preserves both simplicial powers and copowers.
Given $A\in\sM/\Gamma$, we write $A^K_\Gamma$ for the simplicial power in $\sM/\Gamma$.

Every good model category is a type-theoretic model category in the sense of~\cite{shulman:invdia}, and hence the subcategory of fibrant objects is a type-theoretic fibration category.
In particular, cofibrations are stable under pullback, so acyclic cofibrations are stable under pullback along fibrations, and hence dependent products of fibrations along fibrations are fibrations.
Taken together, this implies that the fibrations in a good model category $\sM$ form a \emph{comprehension category} $\fibm$ that has the categorical structure corresponding to type theory with dependent sums (including a unit type), dependent products, and identity types.

To actually construct such an interpretation requires a coherence theorem making all the structure strictly stable under pullback.
We will use the coherence method of~\cite{lw:localuniv}, which applies to many different kinds of structure.
The input to this method is a general comprehension category $(\C,\T)$.
One then defines the \textbf{left adjoint splitting} $(\C,\T_!)$ (or just $\C_!$ for short) in which a type $A\in \T_!(\Gamma)$ consists of an object $V_A\in\C$, a type $E_A\in\T(V_A)$, and a map $\name{A}:\Gamma\to V_A$.
We call $V_A$ the ``local universe'' and think of this as a representative of the pullback of $E_A$ along $\name{A}$.
Reindexing of such types is done by simple composition ($A[f]$ is $\name{A}\circ f$) which is strictly associative.
The local universes technique then shows that if $\C$ satisfies a technical condition (see~\eqref{eq:lf} below) and $\C$ has ``weakly stable'' structure of some sort, meaning that it exists in each fiber $\T(\Gamma)$ and the reindexing of \emph{a} structure is \emph{a} structure, then $\C_!$ admits \emph{strictly} stable structure (obtained by constructing weakly stable structure once in the ``universal case''), and thus provides semantics for the corresponding type-theoretic rules.
See~\cite{lw:localuniv} for details.

We summarize the above discussion as follows.

\begin{thm}\label{thm:gmc-tt}
  Any good model category $\sM$ admits a natural structure of a comprehension category $\fibm$ where $\cF(\Gamma)$ is the category of fibrations with codomain $\Gamma$.
  Moreover, $\fibm$ has weakly stable dependent sums, unit type, dependent products, and identity types, and therefore $\fibmbang$ has strictly stable dependent sums, unit type, dependent products, and identity types.

  Similarly, writing $\fibmf$ for the restriction of $\fibm$ to fibrant objects, $\fibmf$ carries the same weakly stable structure, and hence $\fibmfbang$ the same strictly stable structure.
\end{thm}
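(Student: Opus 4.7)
The plan is to verify the claim in three steps: (a) exhibit the basic comprehension category structure on $\fibm$; (b) construct each type former and check it is \emph{weakly stable} under pullback; (c) invoke the local universes coherence method of \cite{lw:localuniv} to obtain strict stability in $\fibmbang$. The fibrant-object version then follows by restriction.

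For (a), let $\cF(\Gamma)$ be the full subcategory of $\sM/\Gamma$ on the fibrations, with comprehension functor taking $(p : A \fib \Gamma)$ to its domain $A$. Pullback preserves fibrations by the basic model category axioms, so reindexing is well-defined and yields a cloven (but non-split) comprehension category.

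For (b), I would build the type formers semantically as follows. Dependent sums are composites of fibrations, again fibrations. The unit type in context $\Gamma$ is $\mathrm{id}_\Gamma$. Dependent products are the right adjoints $p_*$ supplied by local cartesian closedness, and $p_* q$ is a fibration whenever $p$ and $q$ are, since every good model category is a type-theoretic model category in the sense of \cite{shulman:invdia} (as noted in the paragraph preceding the theorem, where the relevant Frobenius/pushout-product argument is invoked). Identity types are constructed via the simplicial path object $A^\ivl_\Gamma := A^\ivl \times_{\Gamma^\ivl} \Gamma$; applying the pushout-product axiom to the cofibration $\partial\ivl \cofto \ivl$ and the fibration $p$ yields the required acyclic-cofibration / fibration factorization of the diagonal $A \to A \times_\Gamma A$ through $A^\ivl_\Gamma$. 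Weak stability in each case then reduces to the facts that pullback commutes, up to canonical isomorphism, with composition of fibrations, with the right adjoints $p_*$ (by Beck--Chevalley), and with simplicial powers in slices; this last fact is precisely what the axiom of simplicial local cartesian closure buys us.

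For (c), the technical representability condition needed by the local universes method is immediate in any good model category, since fibrations are closed under the relevant operations and pullbacks. Applying the main theorem of \cite{lw:localuniv} then yields the strictly stable type-theoretic structure on $\fibmbang$. For the fibrant-object variant, one checks that fibrant objects are closed under the relevant operations: pullback along fibrations into fibrant bases, composition of fibrations, dependent products along fibrations between fibrant objects, and simplicial powers by small simplicial sets. Hence the weakly stable structure restricts to $\fibmf$, and the same coherence argument delivers $\fibmfbang$. I expect the main obstacle to be the pullback-stability of the identity-type construction, which rests entirely on simplicial powers in slices being preserved by pullback---the content of the simplicial local cartesian closure axiom, added in the revision alluded to in the preceding remark to patch exactly this gap.
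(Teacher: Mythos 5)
Your proposal is correct and takes essentially the same route as the paper, which presents this theorem as a summary of the preceding discussion: composites of fibrations for $\Sigma$, the locally cartesian closed right adjoints for $\Pi$ (with fibrancy of $p_*q$ via stability of acyclic cofibrations under pullback along fibrations), the fibred simplicial power $A^\ivl_\Gamma$ for identity types, and the local universes splitting of~\cite{lw:localuniv} for strict stability, with~\eqref{eq:lf} holding since $\sM$ is locally cartesian closed. The one small compression is that the acyclic-cofibration half of the path-object factorization does not come from the pushout-product axiom for $\mathbf{2}\to\ivl$ (that gives only the fibration $A^\ivl_\Gamma \to A\times_\Gamma A$) but from powering the fibrant object $A$ by the weak equivalence of cofibrant simplicial sets $\ivl\to\mathbf{1}$ together with the split-mono-hence-cofibration observation, exactly as in the paper's construction of the canonical stable class of identity types.
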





\section{Coproducts}
\label{sec:coproducts}

We warm up for pushouts by considering the case of coproducts.
To start with, we specialize the definitions from~\cite[\sec3.4.1]{lw:localuniv} to our context of a good model category.
We qualify these definitions with ``typal'' (the adjective of ``type'') to distinguish them from the ordinary categorical constructions in \sM.

\begin{defn}\label{defn:sum}
  A \textbf{typal coproduct} of fibrations $A_1\to \Gamma$ and $A_2\to\Gamma$ consists of a fibration $A_1\oplus A_2 \to\Gamma$ with maps $\nu_i: A_i \to A_1\oplus A_2$ over $\Gamma$ such that for any fibration $C\to A_1\oplus A_2$ with sections $t_i : A_i \to C$ over $\nu_i$, there exists a section $s:A_1\oplus A_2 \to C$ such that $s\circ \nu_i = t_i$.

  We say \sM has \textbf{weakly stable typal coproducts} if for any $A_1\to \Gamma$ and $A_2\to\Gamma$ there exists a typal coproduct $A_1\oplus A_2$ such that for any $\sigma:\Delta\to\Gamma$, the pullback $\sigma^*(A_1\oplus A_2)$ with injections $\sigma^*\nu_i$ is a typal coproduct of $\sigma^*A_1$ and $\sigma^* A_2$.
\end{defn}

\begin{thm}[{\cite[Lemma 3.4.1.4]{lw:localuniv}}]
  If \sM has weakly stable typal coproducts, then its left adjoint splitting models type theory with coproduct types.
\end{thm}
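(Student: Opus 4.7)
The plan is to invoke the local universes coherence method directly: as stated, this is an instance of the general~\cite[Lemma~3.4.1.4]{lw:localuniv}, whose content is that any weakly stable structure on a comprehension category lifts to a strictly stable version on its left adjoint splitting. I will sketch how this unfolds for coproducts in our setting.

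Recall that an object of $\T_!(\Gamma)$ is a triple $(V_A, E_A, \name{A}\colon \Gamma \to V_A)$ with $E_A \in \cF(V_A)$, and reindexing is precomposition of names: $(V_A, E_A, \name{A})[\sigma] := (V_A, E_A, \name{A} \circ \sigma)$. Since composition of morphisms is strictly associative, any construction in $\T_!$ that depends on the names of its input types only through composition is automatically strictly stable under substitution. The strategy is therefore to build the coproduct ``universally'' once and obtain each specific instance by reindexing.

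Concretely, given $A_1 = (V_1, E_1, \name{A_1})$ and $A_2 = (V_2, E_2, \name{A_2})$ in $\T_!(\Gamma)$, I would set $V := V_1 \times V_2$, pull each $E_i$ back along the projection $V \to V_i$ to obtain $E_i^* \in \cF(V)$, and apply weakly stable typal coproducts in $\sM/V$ to form $E_1^* \oplus_V E_2^*$ as a fibration over $V$. Define
\[ A_1 \oplus A_2 \;:=\; \bigl(V,\; E_1^* \oplus_V E_2^*,\; \pair{\name{A_1}}{\name{A_2}}\bigr), \]
with injections $\nu_i$ pulled back from the universal ones over $V$. For elimination, given a motive $C = (V_C, E_C, \name{C})$ over $A_1 \oplus A_2$ in $\T_!$ together with sections $t_i$ over $\nu_i$, unfold everything to honest fibrations in $\sM$ (by pulling $E_C$ back along $\name{C}$) and apply the typal-coproduct eliminator guaranteed by \cref{defn:sum} to produce a section, which is then repackaged as a term in $\T_!$.

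Strict stability under a substitution $\sigma\colon \Delta \to \Gamma$ is immediate because $\pair{\name{A_1}}{\name{A_2}} \circ \sigma = \pair{\name{A_1} \circ \sigma}{\name{A_2} \circ \sigma}$, and both the injections and the eliminator depend on this composite only through further composition. The main point requiring care is verifying that the elimination step works uniformly for every representation of $C$, since $\name{C}$ can be arbitrary; but this is exactly what the pullback-stability clause of \cref{defn:sum} provides, reducing each elimination problem to the universal typal coproduct in $\sM/V$ and transporting the resulting section back along $\name{C}$.
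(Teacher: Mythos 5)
Your construction of the formation and introduction structure is exactly right and matches what \cite[Lemma 3.4.1.4]{lw:localuniv} does: take $V = V_1\times V_2$, form the weakly stable typal coproduct of the pulled-back $E_i$ over $V$, and name it by $\pair{\name{A_1}}{\name{A_2}}$, so that strict stability follows from associativity of composition of names.

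The gap is in the elimination step. You construct the eliminator separately for each instance of the motive $C$ over $\Gamma$ and then assert that strict stability is ``immediate,'' but \cref{defn:sum} only guarantees that a section \emph{exists} in each instance (and, via weak stability, that every reindexed instance again has the elimination property). It does not make your chosen sections compatible: given $\sigma\colon\Delta\to\Gamma$, the section you pick for $C[\sigma]$ over $\Delta$ need not equal the pullback of the one you picked for $C$ over $\Gamma$. Your appeal to ``reducing each elimination problem to the universal typal coproduct in $\sM/V$'' does not repair this, because $V$ classifies only the formation-rule premises ($A_1$ and $A_2$); the motive $C$ and the sections $t_1,t_2$ do not live over $V$, so there is nothing universal there to reduce to. The actual argument requires a \emph{second} local universe classifying all the eliminator premises, of the shape
\[
[\; a_1:V_1,\ a_2:V_2,\ c:\textstyle\prod u:E_{A_1\oplus A_2}.\,V_C,\ t_1:\textstyle\prod y_1:E_{A_1}.\,E_C(c(\nu_1(y_1))),\ t_2:\textstyle\prod y_2:E_{A_2}.\,E_C(c(\nu_2(y_2)))\;]
\]
built using condition~\eqref{eq:lf} (dependent exponentials of display maps), which your sketch never invokes. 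One then chooses a single section in this universal case --- using weak stability to know that the typal coproduct pulled back to this local universe still has the elimination property --- and obtains every specific eliminator by reindexing, which is what makes the choices strictly stable. This is the same pattern the paper uses explicitly in, e.g., \cref{thm:lu-pushout}.
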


Now, how do we actually \emph{construct} weakly stable typal coproducts?
Of course, since \sM is a model category, it has ordinary categorical coproducts, and the coproduct $A_1 + A_2 \to \Gamma$ satisfies all parts of \cref{defn:sum} except that it may not be a fibration.
(Admittedly, for some particularly nice \sM, such as simplicial sets, it is always a fibration.
However, we treat the general case not only out of a desire for generality, but because in the case of pushouts the analogous argument will be necessary even when \sM is simplicial sets.)

The obvious solution is to fibrantly replace it.
Thus, let $A_1 + A_2 \to A_1\oplus A_2 \fib \Gamma$ be an (acyclic cofibration, fibration) factorization.

\begin{thm}\label{thm:coproduct}
  For any fibrations $A_1 \fib \Gamma$ and $A_2\fib\Gamma$, 
  the fibration $A_1\oplus A_2 \fib\Gamma$ is a weakly stable typal coproduct.
\end{thm}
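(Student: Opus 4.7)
The plan is to verify the two parts of the definition of a weakly stable typal coproduct: first, that $A_1 \oplus A_2$ itself enjoys the typal coproduct property, and second, that this property is preserved by pullback along any $\sigma \colon \Delta \to \Gamma$. For the typal coproduct property, given a fibration $p \colon C \fib A_1 \oplus A_2$ with sections $t_i \colon A_i \to C$ over $\nu_i$, I would combine them by the universal property of the ordinary coproduct $A_1 + A_2$ to obtain a map $[t_1,t_2] \colon A_1 + A_2 \to C$ which is a section of $p$ along the canonical acyclic cofibration $A_1 + A_2 \acof A_1 \oplus A_2$. Since $p$ is a fibration, the standard lifting property of acyclic cofibrations against fibrations immediately produces the desired extension $s \colon A_1 \oplus A_2 \to C$ with $s \circ \nu_i = t_i$.

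The same argument, carried out in the slice over $\Delta$, gives weak stability provided that, for every $\sigma$, the induced map $\sigma^*\nu \colon \sigma^*(A_1+A_2) \to \sigma^*(A_1 \oplus A_2)$ is again an acyclic cofibration. Note that $\sigma^*(A_1+A_2) \cong \sigma^*A_1 + \sigma^*A_2$, since pullback in a locally cartesian closed category preserves colimits, so the source and induced injections are as required. Moreover $\sigma^*\nu$ is automatically a cofibration by assumption~(iii), cofibrations here being monomorphisms and monomorphisms being stable under all limits. So the remaining content is to show that $\sigma^*\nu$ is a weak equivalence.

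For this I would factor $\sigma = \iota \circ q$ as an acyclic cofibration $q \colon \Delta \acof Z$ followed by a fibration $\iota \colon Z \fib \Gamma$ and handle the two factors in turn. Pullback along $\iota$ is the easy case: the map $\iota^*(A_1 \oplus A_2) \to A_1 \oplus A_2$ is itself a fibration, being the pullback of $\iota$ along $A_1 \oplus A_2 \fib \Gamma$, so by right properness together with cofibration stability the pullback of $\nu$ along it is again an acyclic cofibration. The genuinely hard case is pullback along $q$, which analogously reduces to pulling back an acyclic cofibration along a second acyclic cofibration (the pullback of $q$ along a fibration over $Z$, which is an acyclic cofibration by right properness and mono stability), and this is not automatically preserved even in a right proper model category. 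Here I would appeal to a pushout-product argument, exploiting the simplicial enrichment (assumption~(i)) and the compatibility of copowers with pullback (assumption~(v)) to reduce to the simplicial pushout-product axiom. This last step is the main obstacle, and is where the assumption that $\sM$ is simplicially locally cartesian closed is essential.
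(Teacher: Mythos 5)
Your treatment of the typal coproduct property itself, the reduction of weak stability to showing that $\sigma^*(A_1+A_2) \to \sigma^*(A_1\oplus A_2)$ is an acyclic cofibration, the factorization of $\sigma$, and the fibration case all match the paper. The gap is in the final step, the case where $\sigma$ is an acyclic cofibration. The pushout-product axiom concerns maps of the form $X\ten L \sqcup_{X\ten K} Y\ten K \to Y\ten L$ built from a simplicial copower, and no copower appears anywhere in the construction of $A_1+A_2$ or of the comparison map $\sigma^*(A_1+A_2)\to A_1+A_2$; there is nothing to apply that axiom to, and simplicial local cartesian closedness plays no role in this theorem. (That argument is exactly what is needed one section later for pushouts, where the explicit homotopy pushout genuinely contains a factor $A\ten\ivl$; you have imported it into a setting where it has no purchase.)

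The step you are missing is elementary. Since $\sigma$ is an acyclic cofibration and each $A_i\fib\Gamma$ is a fibration, right properness makes each $\sigma^*A_i\to A_i$ a weak equivalence, and it is a monomorphism, hence an acyclic cofibration. Acyclic cofibrations, being the left class of a weak factorization system, are closed under coproducts, and $\sigma^*$ preserves coproducts, so $\sigma^*(A_1+A_2)\cong \sigma^*A_1+\sigma^*A_2 \to A_1+A_2$ is an acyclic cofibration. Now in the pullback square comparing $\sigma^*(A_1+A_2)\to\sigma^*(A_1\oplus A_2)$ with $A_1+A_2\to A_1\oplus A_2$, the top, right-hand, and bottom maps are all weak equivalences (the bottom again by right properness, being a pullback of $\sigma$ along the fibration $A_1\oplus A_2\fib\Gamma$), so 2-out-of-3 gives that the left-hand map is a weak equivalence, hence an acyclic cofibration. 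You correctly observed that a pullback of an acyclic cofibration along an acyclic cofibration need not be acyclic in general; the resolution is that one does not need such a preservation statement here, because the fourth side of the square is controlled directly.
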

\begin{proof}
  We define the injections by composition with those of $A_1+A_2$.
  To show that $A_1\oplus A_2$ is a typal coproduct, let $C\fib A_1\oplus A_2$ be a fibration with sections $t_i$ over $A_i$.
  Then the universal property of $A_1+A_2$ induces a section $t : A_1+A_2 \to C$, and the lifting property of the acyclic cofibration $A_1+A_2 \to A_1\oplus A_2$ against the fibration $C\fib A_1\oplus A_2$ allows us to extend this section to $A_1\oplus A_2$.

  To show that this typal coproduct is weakly stable, it suffices to show that for any $\sigma :\Delta\to\Gamma$, the pullback $\sigma^*(A_1+A_2) \to \sigma^*(A_1\oplus A_2)$ is again an acyclic cofibration.
  Since cofibrations are stable under pullback, this map is a cofibration; thus it remains to show it is a weak equivalence.
  Weak equivalences are not generally stable under pullback, but weak equivalences between fibrations are; so we would be done if $A_1+A_2 \to \Gamma$ were a fibration, but the whole point is that it may not be.

  Right properness of \sM ensures that weak equivalences are also stable under pullback along fibrations, so we would be done if $\sigma$ were a fibration.
  In general it may not be either, but we can factor it as an acyclic cofibration followed by a fibration.
  Thus, without loss of generality we may assume that $\sigma$ is an acyclic cofibration.

  In this case, right properness tells us that the induced maps $\sigma^*A_i \to A_i$ and $\sigma^*(A_1\oplus A_2) \to (A_1\oplus A_2)$ are weak equivalences, hence acyclic cofibrations, since they are pullbacks of $\sigma$ along fibrations.
  Moreover, acyclic cofibrations are closed under coproducts (being the left class of a weak factorization system), so the induced map $\sigma^*A_1 +\sigma^*A_2 \to A_1+A_2$ is an acyclic cofibration.
  Moreover, since $\sigma^*$ is a left adjoint, it preserves coproducts.
  Thus, in the following square
  \[ \begin{tikzcd}
    \sigma^*(A_1+A_2) \ar[r,acyc,cof] \ar[d] & A_1+A_2 \arrow[d,acyc,cof]
    \\ \sigma^*(A_1\oplus A_2) \arrow[r, acyc] & A_1\oplus A_2
  \end{tikzcd} \]
  all the marked maps are acyclic cofibrations, hence weak equivalences.
  Hence, by 2-out-of-3, so is the remaining map, which is what we wanted.
\end{proof}

\begin{cor}
  The left adjoint splitting of \sM models coproduct types.\qed
\end{cor}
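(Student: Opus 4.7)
The plan is simply to combine the two ingredients already at hand. \cref{thm:coproduct} establishes that every good model category $\sM$ admits weakly stable typal coproducts in the sense of \cref{defn:sum}, which is precisely the hypothesis of the result \cite[Lemma 3.4.1.4]{lw:localuniv} quoted just before \cref{thm:coproduct}. That lemma converts weakly stable typal coproducts in a comprehension category into strictly stable coproduct types in its left adjoint splitting, so composing the two yields the corollary.

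To apply this cleanly, I would first invoke \cref{thm:gmc-tt}, which already equips $\fibmbang$ with strictly stable dependent sums, unit type, dependent products, and identity types; this provides the underlying split comprehension category into which the coproduct-type structure will be installed. The output of \cref{thm:coproduct} is phrased for the fibration comprehension category $\fibm$, so the only thing to verify before feeding it into the coherence lemma is that the constructed $A_1 \oplus A_2 \to \Gamma$ lives in $\cF(\Gamma)$; but this holds by construction, since it is the fibration half of the chosen (acyclic cofibration, fibration) factorization of $A_1 + A_2 \to \Gamma$.

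I do not expect any genuine obstacle: all of the substantive work has already been carried out in \cref{thm:coproduct} (fibrantly replacing the categorical coproduct, deducing the typal universal property from a lifting against the generating acyclic cofibration, and proving weak stability by factoring the substitution $\sigma$ and exploiting right properness), and the passage from weakly stable to strictly stable structure is an entirely formal consequence of the local universes machinery of \cite{lw:localuniv}. Accordingly, the proof should be no more than a one-line citation of \cref{thm:coproduct} together with \cite[Lemma 3.4.1.4]{lw:localuniv}.
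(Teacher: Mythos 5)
Your proposal is correct and matches the paper's own (implicit) argument exactly: the corollary is the immediate combination of \cref{thm:coproduct} with the quoted coherence lemma from \cite{lw:localuniv}, which is why the paper marks it with a bare \qed. The extra checks you mention (that $A_1\oplus A_2\to\Gamma$ is a fibration by construction) are sound but already built into the statement of \cref{thm:coproduct}.
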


\section{Pushouts in model categories}
\label{sec:pushouts}

Higher inductive pushouts are not a traditionally standard type constructor, so we need to begin with definitions.
First we consider a sort of weakly stable structure that only makes sense in a context where we have simplicial homotopies.
In \cref{sec:coherence-pushouts} we will rephrase this in type-theoretic language and prove the relevant local universes coherence theorem.

\begin{defn}\label{defn:ivl-pushout}
  Let $f_1:A\to B_1$ and $f_2:A\to B_2$ be morphisms between fibrations $A\fib\Gamma$ and $B_i\fib\Gamma$.
  A \textbf{$\ivl$-typal pushout} is a fibration $D\fib\Gamma$ with maps $\nu_i : B_i \to D$ over $\Gamma$ and a homotopy $\mu:A \ten \ivl \to D$ over $\Gamma$ between $\nu_1 \circ f_1$ and $\nu_2\circ f_2$, such that for any fibration $C\fib D$ equipped with sections $t_i : B_i \to C$ over $\nu_i$ and a homotopy $u:A\ten \ivl \to C$ over $\mu$, there exists a section $s:D\to C$ such that $s\circ \nu_i = t_i$ and $s\circ \mu = u$.

  We say \sM has \textbf{weakly stable $\ivl$-typal pushouts} if for any $f_1,f_2$ there exists a $\ivl$-typal pushout $D$ such that for any $\sigma:\Delta\to\Gamma$, the pullback $\sigma^* D$ with injections $\sigma^*\nu_i$ and homotopy $\sigma^*\mu$ is a $\ivl$-typal pushout of $\sigma^*f_1$ and $\sigma^* f_2$.
\end{defn}

\begin{thm}
  Any good model category has weakly stable $\ivl$-typal pushouts.
\end{thm}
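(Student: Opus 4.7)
The plan is to imitate the coproduct construction in \cref{thm:coproduct}, but with the ordinary coproduct $A_1 + A_2$ replaced by a \emph{double mapping cylinder}. Specifically, given $f_1 : A \to B_1$ and $f_2 : A \to B_2$ over $\Gamma$, let
\[ P \defeq B_1 \sqcup_{A \ten \{0\}} (A \ten \ivl) \sqcup_{A \ten \{1\}} B_2 \]
be the pushout in $\sM/\Gamma$ of $B_1 \ot A \into A\ten\ivl \leftmono A \to B_2$, and then take an (acyclic cofibration, fibration) factorization $P \acof D \fib \Gamma$. The maps $\nu_i$ and the homotopy $\mu$ are defined by composing the coprojections into $P$ with the acyclic cofibration to $D$.

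To verify that $D$ is a $\ivl$-typal pushout, suppose $C \fib D$ with sections $t_i$ and homotopy $u$ as in \cref{defn:ivl-pushout}. Since $u$ is a homotopy over $\mu$, its restrictions along $A\ten\{i\} \into A\ten\ivl$ lie over $\nu_i \circ f_i$ and can be taken to equal $t_i \circ f_i$, so the data $(t_1, t_2, u)$ assemble (by the ordinary pushout property of $P$) into a section $P \to C$ over the inclusion $P \to D$; lifting this section across the acyclic cofibration $P \acof D$ against the fibration $C \fib D$ produces the required $s: D \to C$.

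Weak stability is the main obstacle, and is handled as in \cref{thm:coproduct}, only now with an additional term $A \ten \ivl$ in the pushout. Given $\sigma : \Delta \to \Gamma$, the pullback $\sigma^*P \to \sigma^*D$ is a cofibration by axiom~\ref{item:m1a}; we must show it is a weak equivalence. Factoring $\sigma$ as an acyclic cofibration followed by a fibration, it suffices to treat the two cases separately. If $\sigma$ is a fibration, then $\sigma^*D \to D$ is a fibration (being a pullback of $\sigma$ along $D\fib\Gamma$), so right properness applied to the acyclic cofibration $P \acof D$ yields the claim. If $\sigma$ is an acyclic cofibration, then right properness applied to the fibrations $A \fib \Gamma$, $B_i \fib \Gamma$, $D \fib \Gamma$ shows that each of $\sigma^*A \to A$, $\sigma^*B_i \to B_i$, $\sigma^*D \to D$ is an acyclic cofibration; crucially, axiom~\ref{item:m3} (simplicial local cartesian closure) guarantees that copowers are stable under pullback, so $\sigma^*(A\ten\ivl) \iso (\sigma^*A)\ten\ivl$, and the induced map to $A\ten\ivl$ is an acyclic cofibration by the pushout-product axiom applied to $\sigma^*A \acof A$ and $\emptyset \cof \ivl$. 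Since $\sigma^*$ preserves colimits, $\sigma^*P$ is the analogous double mapping cylinder of the $\sigma^*$-ed data; because the inclusions $A\ten\{i\} \cof A\ten\ivl$ are cofibrations (being pushout-products of cofibrations), iterating the gluing lemma shows that $\sigma^*P \to P$ is a weak equivalence. Finally, in the commuting square
\[ \begin{tikzcd}
\sigma^*P \ar[r] \ar[d, acyc, cof] & P \ar[d, acyc, cof] \\
\sigma^*D \ar[r, acyc] & D
\end{tikzcd} \]
the right vertical is acyclic by construction, the top and bottom horizontals are weak equivalences (the latter by right properness), so by 2-out-of-3 the left vertical is a weak equivalence, as required.
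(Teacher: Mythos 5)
Your construction and verification are essentially the paper's own: the object you call $P$ is exactly the explicit homotopy pushout $Q$ used there (the pushout of $B_1+B_2 \leftarrow A+A \to A\ten\ivl$), the universal property is obtained by the same two steps (the ordinary pushout property followed by lifting against $P\acof D$), and weak stability is reduced in the same way to showing $\sigma^*P\to P$ is a weak equivalence when $\sigma$ is an acyclic cofibration. The only divergence is that last step: you invoke the gluing lemma, whereas the paper factors $\sigma^*Q\to Q$ through the intermediate pushout $S$ of $\sigma^*A\ten\ivl \leftarrow \sigma^*A+\sigma^*A \to B_1+B_2$ and shows each factor is an acyclic cofibration (the second being a cobase change of the pushout-product of $\sigma^*A\acof A$ with $\mathbf{2}\to\ivl$). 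Your route works, but two points deserve care. First, the gluing lemma requires either left properness or cofibrancy of all six objects; the latter does hold here, since in a locally cartesian closed category the initial object is strict, so $\emptyset\to X$ is a monomorphism and hence a cofibration --- but this should be said. Second, ``iterating'' the gluing lemma one cylinder end at a time is delicate: after gluing $B_2$ onto $A\ten\ivl$, the remaining leg $A\ten\{0\}\to A\ten\ivl\cup_{A}B_2$ is a composite whose second factor is a cobase change of the arbitrary map $f_2$ and so need not be a cofibration. The clean move is a single application of the cube lemma to the span $B_1+B_2\leftarrow A+A\to A\ten\ivl$, whose right-hand leg is a cofibration in both rows; with that adjustment your argument goes through.
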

\begin{proof}
  Give $f_1,f_2$, let $Q$ be their explicit homotopy pushout, meaning the pushout of the following diagram in \sM:
  \[ \begin{tikzcd}
    A+A \ar[r,cof,"\iota"] \ar[d,"f_1+f_2"'] & A\ten\ivl
    \\ B_1+B_2 
  \end{tikzcd} \]
  By construction, this satisfies all parts of \cref{defn:ivl-pushout} except that it may not be a fibration over $\Gamma$.
  (And in this case it really isn't, even in simplicial sets.)
  Let $D$ be its fibrant replacement, i.e.\ we have a factorization $Q\to D \fib \Gamma$ as an acyclic cofibration followed by a fibration.
  Then $D$ has injections and a homotopy obtained by composition from $Q$, and for any fibration $C\fib D$ as in \cref{defn:ivl-pushout} we can first define a section over $Q$ by its universal property and then extend to $D$ by lifting against the acyclic cofibration $Q\to D$.

  It remains to deal with weak stability.
  As in \cref{thm:coproduct}, for this it suffices to show that $\sigma^*Q\to\sigma^*D$ is an acyclic cofibration for any acyclic cofibration $\sigma:\Delta\to\Gamma$, and by 2-out-of-3 it suffices to show that $\sigma^*Q \to Q$ is an acyclic cofibration.
  Again, $\sigma^*$ preserves colimits and simplicial copowers, so $\sigma^*Q $ is the pushout of $\sigma^*B_1+\sigma^*B_2$ and $\sigma^*A \ten\ivl$ under $\sigma^*A+\sigma^*A$.
  Furthermore, again as in \cref{thm:coproduct}, $\sigma^*A \to A$ and $\sigma^*B_i \to B_i$ are acyclic cofibrations, hence so are $\sigma^*A+\sigma^*A \to A+A$ and ${\sigma^*B_1 +\sigma^*B_2} \to B_1+B_2$.

  Now consider the following commutative cube, in which the left-hand and right-hand faces are pushouts, and the objects $R$ and $S$ are also pushouts.
  \[\begin{tikzcd}
    & \sigma^* A+\sigma^*A \arrow[dl,cof] \arrow[rr,cof,acyc] \arrow[dd] \ar[dr,phantom,"R"{name=sr}]
    & & A+A \arrow[dl,cof] \arrow[dd] \ar[to=sr] \\
    \sigma^*A\ten\ivl\arrow[dd] \ar[to=sr, crossing over] & & A\ten\ivl \ar[from=sr] \\
    & \sigma^*B_1+\sigma^*B_2 \arrow[dl] \arrow[rr,cof,near end,"\sim"] \ar[dr,phantom,"S"{name=tr}]
    \ar[from=sr,to=tr,crossing over]
    \arrow[from=ul,to=ur, crossing over,cof,near start,"\sim"']
    & & B_1+B_2 \arrow[dl] \ar[to=tr] \\
    \sigma^*Q \arrow[rr] \ar[to=tr] & & Q \arrow[from=uu, crossing over]
    \ar[from=tr] \\
  \end{tikzcd}\]
  To show that $\sigma^*Q \to Q$ is an acyclic cofibration, it will suffice to show that both of its factors $\sigma^*Q\to S$ and $S\to Q$ are such.
  The former is easy, since it is a pushout of the acyclic cofibration ${\sigma^*B_1 +\sigma^*B_2} \to B_1+B_2$.
  For the latter, a standard argument shows that it is the pushout of the map $R\to A\ten\ivl$ in the upper square.
  However, this map is the pushout-product of the acyclic cofibration $\sigma^*A\to A$ and the cofibration of simplicial sets $\mathbf{2}\to\ivl$.
  Thus since the model structure is simplicial, it is an acyclic cofibration as well.
\end{proof}

\section{Pushouts in comprehension categories}
\label{sec:coherence-pushouts}

Inside of (ordinary, Martin-L\"of) type theory, of course, we do not have a ``$\ivl$'', so \cref{defn:ivl-pushout} does not correspond directly to anything type-theoretic the way \cref{defn:sum} does.
Instead we need a version of this definition that refers only to identity types, which categorically means path-objects.
This is an instance of ``one type constructor stacked on top of another'', like the case of $\mathsf{W}$-types considered in~\cite[\sec3.4.4]{lw:localuniv}; hence we need to start by defining good classes of identity types.

Let $(\C,\T)$ be a comprehension category.
As in~\cite{lw:localuniv}, if $A\in\T(\Gamma)$ we denote its comprehension by $\Gamma.A\to\Gamma$, and its reindexing along $\sigma:\Delta\to\Gamma$ by $A[\sigma]$.

In what follows, by a \emph{family} we mean an \emph{indexed family}.
That is, a ``family of elements of $S$'', for any set $S$, consists of a set $F$ and a function $F\to S$.
We often abuse notation by identifying elements of $F$ with their images in $S$, but it is important that such a family is not just a subset of $S$.
There is a category $\mathrm{Fam}$ whose objects are families $F\to S$ and whose morphisms are commutative squares.

\begin{defn}\label{defn:id}
  A \textbf{stable class of identity types} on \C consists of:
  \begin{itemize}
  \item For each $A\in \T(\Gamma)$, a non-empty family $\G_\Id(A)$ of elements of $\T(\Gamma.A.A)$, called ``good identity types'' $\Id_A$.
    These must be weakly stable under reindexing, in that for any $\sigma:\Delta\to\Gamma$, there is a morphism of families $\G_\Id(A) \to \G_\Id(A[\sigma])$ over the reindexing functor $\T(\Gamma.A.A) \to \T(\Delta.A[\sigma].A[\sigma])$.
    If $\Id_A\in\G_\Id(A)$ is a good identity type for $A$, we abuse notation by writing $\Id_A[\sigma] \in \G_\Id(A[\sigma])$ for its image.
  \item For each good identity type $\Id_A\in\G_\Id(A)$, a non-empty family $\G_\r(A,\Id_A)$ of ``good reflexivity terms'' that are sections of $\Id_A[\delta_A] \in \T(\Gamma.A)$.
    These must be weakly stable under reindexing, in that we have morphisms of families $\G_\r(A,\Id_A) \to \G_\r(A[\sigma],\Id_A[\sigma])$ over the reindexing morphisms of types and good identity types.
  \item For each good identity type $\Id_A$ and good reflexivity term $\r$, and each type $C\in\T(\Gamma.A.A.\Id_A)$ equipped with a section $c$ of $C[\delta_A,\r]\in\T(\Gamma.A)$, a non-empty family $\G_\J(\Id_A,\r,C,c)$ of ``good extensions'' of $c$ to a section of $C$ itself.
    Of course, every good extension must actually be an extension, i.e.\ its composite with $\delta_A \circ \r$ must be $c$.
    Good extensions must also be weakly stable under reindexing, in that we have morphisms $\G_\J(\Id_A,\r,C,c) \to \G_\J(\Id_A[\sigma],\r[\sigma],C[\sigma],c[\sigma])$ over the reindexings of everything else.
  \end{itemize}
\end{defn}

Note that the underlying data of a good identity type and a good reflexivity term consist equivalently of a factorization of $\Gamma.A \to \Gamma.A.A$ through the comprehension $\Gamma.A.A.\Id_A \to \Gamma.A.A$ of some type $\Id_A$.

\begin{thm}\label{thm:stable-id}
  If \sM is a good model category, then \fibm has a stable class of identity types, called the \textbf{canonical stable class of identity types}, in which:
  \begin{itemize}
  \item $\G_\Id(A)$ is the set of objects of $\sM/\Gamma$ equipped with data exhibiting the universal property of the simplicial power $(\Gamma.A)^\ivl_\Gamma$ therein.
  \item For each such object, the factorization $\Gamma.A \to (\Gamma.A)^\ivl_\Gamma \to \Gamma.A.A \cong \Gamma.A \times_\Gamma \Gamma.A$ is induced by powering with the maps $\mathbf{2} \to \ivl \to \mathbf{1}$ of simplicial sets.
    In particular, every good identity type has exactly one good reflexivity term.
  \item For any $C$ and $c$, every extension of $c$ to $\Id_A$ is good in a unique way (i.e.\ the family of good extensions is simply the set of all extensions).
  \end{itemize}
\end{thm}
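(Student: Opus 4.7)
The plan is to read off each ingredient of \cref{defn:id} directly from the standard path-object construction for $\Gamma.A$ in the simplicial model category $\sM/\Gamma$.

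First, for the good identity types: by assumption~(i) the simplicial power $(\Gamma.A)^\ivl_\Gamma$ exists in $\sM/\Gamma$, and the SM7 pushout-corner axiom applied to the cofibration $\mathbf{2}\to\ivl$ of simplicial sets and the fibration $\Gamma.A\fib\Gamma$ shows that the induced map $(\Gamma.A)^\ivl_\Gamma \to (\Gamma.A)^{\mathbf{2}}_\Gamma \cong \Gamma.A\times_\Gamma\Gamma.A$ is a fibration, hence defines an object of $\cF(\Gamma.A.A)$; together with the map $\Gamma.A\to(\Gamma.A)^\ivl_\Gamma$ induced by $\ivl\to\mathbf{1}$, this gives a factorization of the diagonal $\delta_A$. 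I would take $\G_\Id(A)$ to consist of all objects of $\sM/\Gamma$ carrying this universal property with its designated factorization maps; the family is non-empty because the power exists.

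Next, for reflexivity terms: the factorization $\Gamma.A \to (\Gamma.A)^\ivl_\Gamma \to \Gamma.A.A$ is determined uniquely by the simplicial maps $\mathbf{2}\to\ivl\to\mathbf{1}$ together with the universal property of the power, so each $\G_\r(A,\Id_A)$ contains exactly one element. For good extensions: SM7 applied to the acyclic fibration $\ivl\to\mathbf{1}$ and the fibration $\Gamma.A\fib\Gamma$ shows that $\Gamma.A\to(\Gamma.A)^\ivl_\Gamma$, and hence its composite $\delta_A\circ\r : \Gamma.A \to \Gamma.A.A.\Id_A$, is an acyclic cofibration; therefore any section $c$ of $C[\delta_A,\r]$ lifts along it against the fibration $C\fib\Gamma.A.A.\Id_A$ to a section of $C$, and declaring every such extension to be good makes $\G_\J(\Id_A,\r,C,c)$ a non-empty family as required.

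For stability under reindexing $\sigma:\Delta\to\Gamma$, I would invoke assumption~(v): the pullback functor $\sigma^*:\sM/\Gamma\to\sM/\Delta$ preserves simplicial powers, so it sends any good identity type for $A$ to one for $A[\sigma]$ with the analogous universal property, inducing the required morphism $\G_\Id(A)\to\G_\Id(A[\sigma])$; compatibility with reflexivity is automatic from uniqueness, and stability of extensions is trivial since every extension is declared good. The only point to check carefully is that the SM7 axiom really applies inside the slice $\sM/\Gamma$ regarded as a simplicial model category, and that the resulting factorization agrees with the one described in the theorem; both are routine once one unwinds that slice powers are computed as fibred powers, and it is precisely assumption~(v) that ensures all of this descends coherently under pullback so that reindexing produces a good identity type rather than some weaker replacement.
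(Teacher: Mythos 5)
Your overall route is the same as the paper's: identify $\Id_A$ with the fibred power $(\Gamma.A)^\ivl_\Gamma$, get the fibration $\Id_A\to\Gamma.A.A$ from the pullback-corner form of SM7 for $\mathbf{2}\to\ivl$, get uniqueness of reflexivity from the universal property, obtain extensions by lifting against the fibration $\Gamma.A.A.\Id_A.C\to\Gamma.A.A.\Id_A$, and get weak stability from preservation of simplicial powers under pullback. All of that matches the paper.

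There is, however, a genuine error in the one step that carries the real content: your justification that $\Gamma.A\to(\Gamma.A)^\ivl_\Gamma$ is an acyclic cofibration. First, $\ivl\to\mathbf{1}$ is not an acyclic fibration of simplicial sets --- $\ivl=\Delta^1$ is not a Kan complex, so this map is a weak equivalence but not a fibration. Second, and more importantly, SM7 in any form only produces (acyclic) \emph{fibrations} on the power side; it cannot show that a map \emph{into} a power is a cofibration. In a general simplicial model category the map $X\to X^\ivl$ need not be a cofibration at all (e.g.\ the inclusion of constant paths into a path space of topological spaces), so no amount of SM7 will close this. The correct argument splits into two parts: (a) the map is a weak equivalence because $\ivl\to\mathbf{1}$ is a weak equivalence between cofibrant simplicial sets and $\Gamma.A$ is fibrant in $\sM/\Gamma$, so powering preserves it; and (b) the map is a split monomorphism (split by evaluation at either endpoint), hence a cofibration by the good-model-category axiom that every monomorphism is a cofibration. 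Step (b) is exactly where one of the special hypotheses on a good model category enters, and your proof never invokes it; without the cofibrancy of this map the lifting argument producing good extensions does not go through.
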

\begin{proof}
  The projection $(\Gamma.A)^\ivl_\Gamma \to \Gamma.A\times_\Gamma \Gamma.A$ is the pullback corner map for the fibrant object $\Gamma.A\in\sM/\Gamma$ and the cofibration $\mathbf{2}\to \ivl$.
  Since $\sM/\Gamma$ is a simplicial model category, this map is a fibration, hence the comprehension of a type over $\Gamma.A.A$.

  The stability of simplicial powers under pullback gives the reindexing operations for good identity types and good reflexivity terms.
  Note that unlike for many other type constructors such as $\Sigma$- and $\Pi$-types, although these identity types are determined by a 1-categorical universal property, the type-theoretic data we consider (the reflexivity term) is not sufficient to describe this universal property.

  Similarly, the inclusion $\Gamma.A\to (\Gamma.A)^\ivl_\Gamma$ is the pullback corner map for $A$ and the projection $\ivl \to \mathbf{1}$.
  Since $\Gamma.A$ is fibrant in $\sM/\Gamma$ and $\ivl \to \mathbf{1}$ is a weak equivalence between cofibrant objects, this is a weak equivalence.
  Moreover, it is a split monomorphism, hence a cofibration, and thus an acyclic cofibration.
  It follows that given any $C$ and $c$ there exists such an extension, i.e.\ the family of good extensions defined in the theorem statement is non-empty.
  Of course the reindexing of any extension is again an extension, and pseudofunctoriality is automatic.
\end{proof}

By the adjointness between simplicial powers and copowers, a simplicial homotopy $A\ten\ivl \to B$ between $f,g:A\toto B$ is equivalently a lift of $(f,g):A\to B\times B$ to any canonical identity type $\Id_B = B^\ivl$, i.e.\ a term of $\Id_B[(f,g)]$ in context $A$.
But in order to rephrase \cref{defn:ivl-pushout} relative to a stable class of identity types, we also need to talk about ``homotopies over homotopies'', for which we need \emph{dependent identity types}.
Inside of type theory, the dependent identity type looks like this, for a dependent type $x:A \types B(x)\type$:
\[ a_1:A, a_2:A, e:\id[A]{a_1}{a_2}, b_1:A(a_1), b_2:A(a_2) \types \idover[x.B(x)]{b_1}{b_2}{e} \type \]
That is, it tells us how to identify two points in different fibers along a path in the base.
Inside type theory, there are many ways to define such a type:
\begin{enumerate}\label{idover}
\item If we first define the \emph{transport} operation $\transf{e}:B(x) \to B(y)$ (using the eliminator for identity types), then we can define $\idover[x.B(x)]{u}{v}{e}$ to be $\id[B(y)]{\transf{e}(u)}{v}$.
  This is the definition used by~\cite{hottbook} and~\cite{hottcoq,bglsss:hottcoq}.\label{item:idover1}
\item We could instead use $\id[B(x)]{u}{\transf{(\opp{e})}(v)}$, where $\opp{e}:\id[A]{y}{x}$ is the inverse path of $e$ (i.e.\ $\opp{(\blank)}$ witnesses the symmetry of equality).\label{item:idover2}
\item We could use the eliminator for identity types on $e$, with $\idover[x.B(x)]{u}{v}{\refl_x}$ defined to be $\id[B(x)]{u}{v}$.
  This is the definition used by~\cite{hottagda}.\label{item:idover3}
\item We could define $\idover[x.B(x)]{u}{v}{e}$ as an inductive family, with a single constructor giving for any $x:A$ and $u:B(x)$ an element $\refl_u : \idover[x.B(x)]{u}{u}{\refl_x}$.\label{item:idover4}
\end{enumerate}
Compared to the first three, option~\ref{item:idover4} has the disadvantage that $\idover[x.B(x)]{u}{v}{\refl_x}$ is not judgmentally equal to $\id[B(x)]{u}{v}$.
However, option~\ref{item:idover4} is also the one that corresponds most directly to the native path-object structure in a good model category, so it is the one we will adopt.
In \cref{sec:pushouts-type-theory} we will show that our construction also gives pushouts relative to choices~\ref{item:idover1}--\ref{item:idover3}, albeit in a slightly weaker sense.

\begin{defn}\label{defn:dep-id}
  Suppose \C has a stable class of identity types.
  A \textbf{stable class of dependent identity types} relative to this stable class consists of the following data, each of which must be weakly stable in a straightforward sense.
  \begin{itemize}
  \item For each $A\in\T(\Gamma)$ and $B\in\T(\Gamma.A)$, and each good identity type $\Id_A\in \G_\Id(A)$, a non-empty family $\dG_\Id(A,\Id_A,B)$ of ``good dependent identity types'' $\Id^A_B$ in $\T(\Gamma.A.A.\Id_A.B.B)$.
  \item For each good dependent identity type as above, and each good reflexivity term $\r_A$ for $\Id_A$, a non-empty family $\dG_\r(A,\Id_A,\r,B,\Id^A_B)$ of ``good dependent reflexivity terms'' that are sections of $\Id^A_B[\delta_A,\r_A,\delta_B] \in \T(\Gamma.A.B)$.
  \item For each good dependent identity type and good dependent reflexivity term as above, and each type $C\in \T(\Gamma.A.A.\Id_A.B.B.\Id^A_B)$ equipped with a section $c$ of $C[\delta_A,\r_A,\delta_B,\r^A_B]\in \T(\Gamma.A.B)$, a nonempty family $\dG_\J(A,\Id_A,\r_A,B,\Id^A_B,\r^A_B)$ of ``good extensions'' of $c$ to a section of $C$ itself (which are actually extensions thereof).
  \end{itemize}
\end{defn}

Similarly to the non-dependent case, the underlying data of a good dependent identity type and a good dependent reflexivity term consist of a factorization of the diagonal of $\Gamma.A.B$ that lies over a given factorization of the diagonal of $\Gamma.A$, such that the dashed pullback map shown below is a dependent projection:
\[
\begin{tikzcd}[row sep=huge,column sep=huge]
  \Gamma.A.B \ar[r] \ar[d,fib] & \Gamma.A.A.\Id_A.B.B.\Id^A_B \ar[rr] \ar[d] \ar[drr,phantom,near start,"\Gamma.A.A.\Id_A.B.B"{name=hi}] \ar[dashed,fib,to=hi] && \Gamma.A.A.B.B \ar[d] \ar[from=hi] \\
  \Gamma.A \ar[r] & \Gamma.A.A.\Id_A \ar[rr,fib] \ar[from=hi] && \Gamma.A.A
\end{tikzcd}
\]

\begin{thm}\label{thm:stable-dep-id}
  If \sM is a good model category, then \fibm has a \textbf{canonical stable class of dependent identity types} over the canonical stable class of identity types, in which:
  \begin{itemize}
  \item $\dG_\Id(A,\Id_A,B)$ is the set of objects of $\sM/\Gamma$ equipped with the universal property of a simplicial power $(\Gamma.A.B)^\ivl_\Gamma$.
  \item For each such object, the above factorization is given by the following diagram:
    \[
    \begin{tikzcd}[row sep=large]
      \Gamma.A.B \ar[r] \ar[d,fib] & (\Gamma.A.B)^\ivl_\Gamma \ar[rr] \ar[d] \ar[drr,phantom,near start,"\bullet"{name=hi}] \ar[dashed,to=hi,fib] && \Gamma.A.A.B.B \ar[d] \ar[from=hi] \\
      \Gamma.A \ar[r] & (\Gamma.A)^\ivl_\Gamma \ar[rr,fib] \ar[from=hi] && \Gamma.A.A
    \end{tikzcd}
    \]
    In particular, every good dependent identity type has a unique good dependent reflexivity term.
  \item for any $C$ and $c$, every extension of $c$ to $\Id^A_B$ is good.
  \end{itemize}
\end{thm}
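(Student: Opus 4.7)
The plan is to parallel the proof of Theorem~\ref{thm:stable-id}, with one new ingredient: the pushout-product axiom applied in the simplicial model category $\sM/\Gamma$ to verify that the dashed arrow is a fibration. First, I would set up the data. Since $\Gamma.A.B \fib \Gamma.A \fib \Gamma$ is a composite of fibrations, $\Gamma.A.B$ is fibrant in $\sM/\Gamma$, so the simplicial power $(\Gamma.A.B)^\ivl_\Gamma$ exists there. The maps of simplicial sets $\mathbf{2} \to \ivl \to \mathbf{1}$ induce, by functoriality of powers, the horizontal arrows of the diagram; since powering commutes with the projection $\Gamma.A.B \fib \Gamma.A$, they restrict to the factorization of $\Gamma.A \to \Gamma.A.A$ constructed in Theorem~\ref{thm:stable-id}. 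Identifying $(\Gamma.A.B)^{\mathbf{2}}_\Gamma = \Gamma.A.B \times_\Gamma \Gamma.A.B = \Gamma.A.A.B.B$ and $(\Gamma.A)^{\mathbf{2}}_\Gamma = \Gamma.A.A$ puts the whole diagram into the form displayed in the theorem.

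The key new step is to show the dashed arrow is a fibration. Applied in $\sM/\Gamma$ to the cofibration $\mathbf{2} \to \ivl$ of simplicial sets and the fibration $\Gamma.A.B \fib \Gamma.A$, the pushout-product axiom states that the pullback corner map
\[
(\Gamma.A.B)^\ivl_\Gamma \to (\Gamma.A.B)^{\mathbf{2}}_\Gamma \times_{(\Gamma.A)^{\mathbf{2}}_\Gamma} (\Gamma.A)^\ivl_\Gamma
\]
is a fibration; under the identifications above, this is exactly the dashed arrow, so it is the comprehension of some type.

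The remainder parallels Theorem~\ref{thm:stable-id}. The left-hand vertical map $\Gamma.A.B \to (\Gamma.A.B)^\ivl_\Gamma$ is obtained by powering the fibrant object $\Gamma.A.B \in \sM/\Gamma$ with the weak equivalence $\ivl \to \mathbf{1}$ of cofibrant simplicial sets, so it is a weak equivalence; and since $\ivl \to \mathbf{1}$ is split (by any vertex of $\ivl$), it is also a split monomorphism, hence a cofibration, and so an acyclic cofibration. Any section $c$ as in Definition~\ref{defn:dep-id} therefore extends along this acyclic cofibration against the fibration $C$, proving non-emptiness of $\dG_\J$ and that every extension is good. Weak stability under any $\sigma : \Delta \to \Gamma$ is then automatic: by the simplicially locally cartesian closed hypothesis on $\sM$, simplicial powers are stable under pullback, which furnishes the reindexing morphisms for $\dG_\Id$, $\dG_\r$, and $\dG_\J$ with pseudofunctoriality automatic. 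The main subtlety is identifying the target of the pushout-product's pullback corner with the intermediate object in the factorization diagram, which is the routine computation of binary simplicial powers as fibred products over $\Gamma$.
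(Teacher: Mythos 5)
Your proposal is correct and follows essentially the same route as the paper's proof: the dashed map is identified as the pullback corner map for the fibration $\Gamma.A.B \fib \Gamma.A$ and the cofibration $\mathbf{2}\to\ivl$ in the simplicial model category $\sM/\Gamma$, hence a fibration, and the inclusion $\Gamma.A.B \to (\Gamma.A.B)^\ivl_\Gamma$ is an acyclic cofibration exactly as in \cref{thm:stable-id}, giving non-emptiness of the good extensions. Your write-up merely spells out the identifications of the binary powers with fibred products and the stability-under-pullback argument, which the paper leaves implicit.
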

\begin{proof}
  Similarly to the non-dependent case, the dotted projection above is the pullback corner map for the fibration $\Gamma.A.B\to\Gamma.A$ and the cofibration $\mathbf{2}\to \ivl$ in the simplicial model category $\sM/\Gamma$, hence a fibration.
  The inclusion $\Gamma.A.B \to (\Gamma.A.B)^\ivl_\Gamma$ is an acyclic cofibration for the same reasons as in \cref{thm:stable-id}, so the set of good extensions is non-empty.
\end{proof}

Finally, in \cref{defn:ivl-pushout} we also need to postcompose a homotopy with a dependent function.
That is, given $x:A \types f(x):B(x)$, we need the following term:
\[ a_1:A, a_2:A, e:\id[A]{a_1}{a_2} \types \ap_f(e) : \idover[x.B(x)]{f(a_1)}{f(a_2)}{e} \]
Inside type theory there is a standard way to define $\ap$, defined using the eliminator for the identity type.
However, once again, in a good model category there is a different way to define it using the functoriality of simplicial powers (see \cref{thm:stable-ap}).
Thus, we also introduce this abstractly.

\begin{defn}\label{defn:ap}
  Suppose \C has stable classes of identity types and dependent identity types.
  A \textbf{stable class of identity applications} consists of, for each types $A\in\T(\Gamma)$ and $B\in\T(\Gamma.A)$, each section $f$ of $B$, each good identity type and reflexivity term for $A$, and each corresponding dependent identity type and dependent reflexivity term for $B$, a non-empty family of ``good identity applications'', which are sections $\ap_f$ of the projection $\Gamma.A.A.\Id_A.B.B.\Id^A_B \to \Gamma.A.A.\Id_A$ such that the following squares commute:
  \[
  \begin{tikzcd}
    \Gamma.A.B \ar[r,"\r"] & \Gamma.A.A.\Id_A.B.B.\Id^A_B \ar[r] & \Gamma.A.A.B.B \\
    \Gamma.A \ar[r,"\r"] \ar[u,"f"] & \Gamma.A.A.\Id_A \ar[u,"{\ap_f}"] \ar[r] & \Gamma.A.A \ar[u,"{(f,f)}",swap]
  \end{tikzcd}
  \]
  Moreover, good identity applications must be weakly stable under reindexing in an evident sense.
\end{defn}

\begin{thm}\label{thm:stable-ap}
  If \sM is a good model category, then \fibm has a \textbf{canonical stable class of identity applications} over the canonical stable classes of identity types and dependent identity types, in which the good sections $\ap_f$ are the maps $f^\ivl_\Gamma$ induced by the functoriality of simplicial powers.
\end{thm}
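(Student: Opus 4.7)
The plan is to verify that $\ap_f \defeq f^\ivl_\Gamma$, the map induced by the functoriality of the simplicial power $(-)^\ivl_\Gamma$ applied to the morphism $f\maps \Gamma.A \to \Gamma.A.B$ in $\sM/\Gamma$, satisfies every clause of \cref{defn:ap}, and that the assignment $f \mapsto f^\ivl_\Gamma$ is weakly stable under reindexing. By \cref{thm:stable-dep-id}, the canonical choice identifies $\Gamma.A.A.\Id_A$ with $(\Gamma.A)^\ivl_\Gamma$ and $\Gamma.A.A.\Id_A.B.B.\Id^A_B$ with $(\Gamma.A.B)^\ivl_\Gamma$, and the projection between them is obtained by functorially powering the fibration $\Gamma.A.B \fib \Gamma.A$. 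Since $f$ is a section of this fibration, applying the functor $(-)^\ivl_\Gamma$ yields that $f^\ivl_\Gamma$ is a section of this projection, as required.

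Next I would verify the two commuting squares of \cref{defn:ap}. Under the canonical identifications, the reflexivity map $\Gamma.A \to \Gamma.A.A.\Id_A$ is the component at $\Gamma.A$ of the natural transformation $\mathrm{id} \Rightarrow (-)^\ivl_\Gamma$ induced by $\ivl \to \mathbf{1}$, and similarly for $\Gamma.A.B$; the left square then commutes by naturality of this transformation applied to $f$. The right square commutes for the same reason, using instead the natural transformation $(-)^\ivl_\Gamma \Rightarrow (-)\times_\Gamma(-)$ induced by $\mathbf{2}\to\ivl$. Non-emptiness of the family of good identity applications is immediate, since $f^\ivl_\Gamma$ is one such.

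Finally, I would check weak stability under reindexing along $\sigma \maps \Delta \to \Gamma$. The essential point is assumption \ref{item:m3}: since $\sM$ is simplicially locally cartesian closed, pullback along $\sigma$ preserves simplicial powers in the slice. Hence there is a canonical isomorphism $\sigma^*\bigl((\Gamma.A.B)^\ivl_\Gamma\bigr) \cong (\Delta.A[\sigma].B[\sigma])^\ivl_\Delta$ compatible with the isomorphisms of \cref{thm:stable-id,thm:stable-dep-id}, and under these identifications $\sigma^*(f^\ivl_\Gamma) = (f[\sigma])^\ivl_\Delta$. This is precisely the required compatibility, and pseudofunctoriality in $\sigma$ is automatic from the functoriality of pullback and the naturality of these comparison isomorphisms.

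The only step requiring any real care is the last one: one must check that the canonical isomorphism $\sigma^*(X^\ivl_\Gamma) \cong (\sigma^* X)^\ivl_\Delta$ is compatible with the whole diagram built from $\mathbf{2}\to\ivl\to\mathbf{1}$, so that the reindexing data for good identity types, good dependent identity types, and good identity applications all fit together. This is essentially bookkeeping, following from the fact that all of the relevant natural transformations are built uniformly from the simplicial structure on $\sM$, which is preserved by pullback under assumption \ref{item:m3}.
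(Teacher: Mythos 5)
Your proposal is correct and follows essentially the same route as the paper, whose entire proof is the observation that the requisite squares commute by the two-variable functoriality of simplicial powers --- exactly the naturality argument you spell out for the maps induced by $\mathbf{2}\to\ivl\to\mathbf{1}$. Your additional explicit check of weak stability via the preservation of simplicial powers under pullback is the same routine point the paper already invokes in \cref{thm:stable-id,thm:stable-dep-id} and leaves implicit here.
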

\begin{proof}
  The requisite squares commute by the two-variable functoriality of simplicial powers.
\end{proof}

Note that in order for these maps to be well-defined, we must require an object of $\G_\Id(A)$ to be \emph{equipped with} the structure of a simplicial power, and hence $\G_\Id(A)$ cannot be merely a sub\emph{set} of $\T(\Gamma.A.A)$, since this structure is not determined even by the reflexivity term.

Finally, we can define a type-theoretic notion of pushout.

\begin{defn}\label{defn:pushout}
  Suppose $\C$ has stable classes of identity types, dependent identity types, and identity applications.
  A \textbf{stable class of typal pushouts} relative to these stable classes consists of the following data, all weakly stable under reindexing.
  \begin{itemize}
  \item For each pair $f_1:\Gamma.A\to \Gamma.B_1$ and $f_2:\Gamma.A\to \Gamma.B_2$ of morphisms over $\Gamma$, a non-empty family of ``good pushout types'' $D\in\T(\Gamma)$ equipped with ``good injections'' $\nu_i : \Gamma.B_i \to \Gamma.D$ over $\Gamma$.
  \item For any good pushout $D$ and good injections $\nu_i$, and any good identity type for $D$, a non-empty family of ``good glueing data'' maps $\mu : \Gamma.A \to \Gamma.D.D.\Id_D$ over $(\nu_1 f_1,\nu_2 f_2)$.
  \item For any good identity type, good reflexivity term, and good gluing data and good injections for a good pushout $D$, and any type $C\in\T(D)$ equipped with sections $t_i$ over $\nu_i$, a good dependent identity type $\Id^D_C$ with good reflexivity term, and a morphism $u:A\to \Gamma.D.D.\Id_D.C.C.\Id^D_C$ over $(\mu,t_1,t_2)$, we have a non-empty family of good sections $s$ of $C$ such that $s \circ \nu_i = t_i$, and for any good identity application $\ap_s$ we have $\ap_s \circ \mu = u$.
  \end{itemize}
\end{defn}

\begin{thm}\label{thm:model-pushout}
  In any good model category, the $\ivl$-typal pushouts (\cref{defn:ivl-pushout}) are a stable class of typal pushouts relative to the canonical stable classes of identity types, dependent identity types, and identity applications.
\end{thm}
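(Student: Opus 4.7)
The plan is to transport the universal property of the $\ivl$-typal pushout from the previous section through the copower/power adjunction, using the fact that the canonical stable classes of identity and dependent identity types are defined precisely as the simplicial powers $(\Gamma.D)^\ivl_\Gamma$ and $(\Gamma.D.C)^\ivl_\Gamma$ (\cref{thm:stable-id,thm:stable-dep-id}), and that good identity applications are the functorial maps $s^\ivl_\Gamma$ (\cref{thm:stable-ap}).

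Concretely, given $f_1, f_2$, I take $D \fib \Gamma$ to be the $\ivl$-typal pushout constructed in the previous section, with its good injections $\nu_i$. For any choice of canonical $\Id_D \in \G_\Id(D)$, the homotopy $\mu : \Gamma.A \ten \ivl \to \Gamma.D$ over $\Gamma$ transposes, under the simplicial copower/power adjunction in $\sM/\Gamma$, to a map $\Gamma.A \to (\Gamma.D)^\ivl_\Gamma = \Gamma.D.D.\Id_D$ lying over $(\nu_1 f_1, \nu_2 f_2)$ by construction of the factorization in \cref{thm:stable-id}; this is the required good gluing datum.

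For the elimination clause, suppose $C \in \T(\Gamma.D)$ is equipped with sections $t_i$ over $\nu_i$, a canonical dependent identity type $\Id^D_C$ with its unique reflexivity term, and a map $u : \Gamma.A \to \Gamma.D.D.\Id_D.C.C.\Id^D_C$ over $(\mu, t_1, t_2)$. Using the dependent adjunction that defines $\Id^D_C = (\Gamma.D.C)^\ivl_\Gamma$ (\cref{thm:stable-dep-id}), and the fact that $u$ is required to lie over $\mu$, the datum $u$ transposes to a map $\bar u : \Gamma.A \ten \ivl \to \Gamma.D.C$ over $\mu$ whose restriction along the two endpoint inclusions recovers $t_1 \circ f_1$ and $t_2 \circ f_2$; this is exactly a homotopy $u : \Gamma.A \ten \ivl \to C$ over $\mu$ in the sense of \cref{defn:ivl-pushout}. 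The universal property of the $\ivl$-typal pushout then yields a section $s : \Gamma.D \to C$ with $s \circ \nu_i = t_i$ and $s \circ \bar u = u$ (simplicially). Since the canonical $\ap_s$ is defined by functoriality of simplicial powers as $s^\ivl_\Gamma$, transposing the equation $s \circ \bar u = u$ back through the adjunction yields $\ap_s \circ \mu = u$ in the identity-type formulation.

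Weak stability under $\sigma : \Delta \to \Gamma$ is immediate from combining three facts: $\ivl$-typal pushouts are already weakly stable (by the previous theorem); the simplicial powers defining the canonical classes of identity types, dependent identity types, and identity applications are stable under pullback (since $\sM$ is simplicially locally cartesian closed, so $\sigma^*$ preserves both copowers and powers in slices); and the copower/power adjunction is preserved by $\sigma^*$. Thus the translation above commutes with reindexing on the nose, and good data in the source pushes forward to good data in the pullback. The only part requiring genuine care is the middle step, where one must verify that a morphism into the dependent identity type really does transpose to a homotopy in the slice $\sM/\Gamma$ lying over the given base homotopy, which amounts to unpacking that the pullback-corner square defining $\Id^D_C$ in \cref{thm:stable-dep-id} is what realizes fibered simplicial powers.
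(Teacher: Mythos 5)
Your proposal is correct and is essentially the paper's own argument: the paper's proof is the single observation that the canonical stable classes of \cref{thm:stable-id,thm:stable-dep-id,thm:stable-ap} were defined precisely so that the data of \cref{defn:pushout} transposes, via the simplicial copower/power adjunction, to the data of \cref{defn:ivl-pushout}, and your write-up just makes that transposition explicit. (One notational slip: the universal property of the $\ivl$-typal pushout gives $s \circ \mu = \bar u$, not $s \circ \bar u = u$; transposing that equation through the adjunction is what yields $\ap_s \circ \mu = u$.)
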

\begin{proof}
  The canonical stable classes were constructed exactly so that the data of \cref{defn:pushout} reduces to that of \cref{defn:ivl-pushout}.
\end{proof}

\section{Pushouts in type theory}
\label{sec:pushouts-type-theory}

Finally, we move to strict stability and type theory.
Recall the basic condition on a comprehension category $(\C,\T)$ that makes local universe theorems work:
\begin{equation}
  \label{eq:lf}
  \tag{LF} \parbox{10cm}{$\C$ has finite products, and dependent exponentials of display maps and product projections along display maps.}
\end{equation}
(A \textbf{display map} is a finite composite of the dependent projections $\Gamma.A\to\Gamma$ associated to the comprehensions of types.)

Of course, since a good model category $\sM$ is locally cartesian closed, $\fibm$ always satisfies condition~\eqref{eq:lf}.

\begin{defn}
  A split comprehension category $\C$ has:
  \begin{itemize}
  \item \textbf{strictly stable identity types} if it has a stable class of identity types (\cref{defn:id}) for which each family of good structures is a singleton.
  \item \textbf{strictly stable dependent identity types} if in addition it has a stable class of dependent identity types for which each family of good structures is a singleton.
  \item \textbf{strictly stable identity applications} if in addition it has a stable class of identity applications for which each family of good structures is a singleton.
  \item \textbf{strictly stable typal pushouts} if in addition it has a stable class of typal pushouts for which each family of good structures is a singleton.
\end{itemize}
\end{defn}

Our strictly stable identity types are easily seen to be equivalent to~\cite[Definition 3.4.3.1]{lw:localuniv} with the ``Frobenius'' condition omitted (which we don't bother with since our intended models all have $\Pi$-types).
Syntactically, the above strictly stable structure corresponds to the type-theoretic rules shown in \cref{fig:id,fig:depid,fig:ap,fig:pushouts}.

\begin{rmk}\label{rmk:rule-style}
  Because we are not concerned with syntactic properties such as admissibility of substitution, we have no qualms about adding variables to the context of the conclusion.
  As a result, multiple rules often have the same premises; we combine these by listing multiple conclusions in the same rule.
  This has the additional advantage that each multiple-conclusion rule corresponds to one new local universe on the semantic side.
  (On the other hand, unlike in some syntactic presentations, we do include all type judgment premises in all rules, since these are necessary semantically.)
\end{rmk}

\begin{figure}
  \centering
  \begin{mathpar}
    \inferrule{\Gamma\types A\type} 
    {\Gamma,a_1:A, a_2:A\types \id[A]{a_1}{a_2}\type \\ 
      \Gamma,a:A \types \refl_a : \id[A]{a}{a}}\and
    \inferrule{\Gamma\types A\type \\ \Gamma,x:A,y:A,e:\id[A]{x}{y}\types C\type \\ \Gamma,x:A \types c:C[x/y,\refl_x/e]} 
    {\Gamma,a_1:A,a_2:A,p:\id[A]{a_1}{a_2}\types \J(xye.C,x.c,a_1,a_2,p):C[a_1/x,a_2/y,p/e] \\ 
    \Gamma,a:A\types \J(xye.C,x.c,a,a,\refl_a) \jdeq c[a/x]}
  \end{mathpar}
  \caption{Identity types}
  \label{fig:id}
\end{figure}

\begin{figure}
  \centering
  \begin{mathpar}
    \inferrule{\Gamma\types A\type \\ \Gamma,x:A \types B\type}
    {\Gamma, a_1:A, a_2:A, e:\id[A]{a_1}{a_2}, b_1:B[a_1/x], b_2:B[a_2/x] \types \idover[B]{b_1}{b_2}{e}\type \\ 
    \Gamma, a:A, b:B[a/x]\types \drefl_b : \idover[B]{b}{b}{\refl_a}}\and
    \inferrule{\Gamma\types A\type \\ \Gamma,x:A \types B\type \\ \Gamma,x:A,y:A,e:\id[A]{x}{y}, u:B, v:B[y/x], d:\idover[B]{u}{v}{e} \types C\type \\ \Gamma,x:A,u:B \types c:C[x/y,\refl_x/e,u/v,\drefl_u/d]}
    {\Gamma, a_1:A, a_2:A, p:\id[A]{a_1}{a_2}, b_1:B[a_1/x], b_2:B[a_2/x], q:\idover[B]{b_1}{b_2}{p} \hspace{3em} \\ 
      \hspace{1em} \types \J'(xyeuvd.C,xu.c,a_1,a_2,p,b_1,b_2,q):C[a_1/x,a_2/y,p/e,b_1/u,b_2/v,q/d] \\ 
    \Gamma, a:A, b:B[a/x]\types \J'(xyeuvd.C,xu.c,a,a,\refl_a,b,b,\drefl_b) \jdeq c[a/x,b/u] \hspace{1em}}
  \end{mathpar}
  \caption{Dependent identity types}
  \label{fig:depid}
\end{figure}

\begin{figure}
  \centering
  \begin{mathpar}
    \inferrule{\Gamma\types A\type \\ \Gamma,x:A\types B\type \\ \Gamma,x:A \types f:B} 
    {\Gamma,a_1:A, a_2:A, p:\id[A]{a_1}{a_2} \types \ap_{x.f}(a_1,a_2,p) : \idover[B]{f[a_1/x]}{f[a_2/x]}{p} \\ 
      \Gamma,a:A\types \ap_{x.f}(a,a,\refl_a)\jdeq \drefl_{f[a/x]}}\and
  \end{mathpar}
  \caption{Identity applications}
  \label{fig:ap}
\end{figure}

\begin{figure}
  \centering
  \begin{mathpar}
    \inferrule{\Gamma\types A\type \\ \Gamma\types B_1\type \\ \Gamma\types B_2\type \\\\
      \Gamma,x:A\types f_1:B_1 \\ \Gamma,x:A \types f_2:B_2}
    {\Gamma \types \fPush (x.f_1,x.f_2) \type \\\\
    \Gamma,y_1:B_1\types \nu_1(y_1):\fPush (x.f_1,x.f_2) \\
    \Gamma,y_2:B_2\types \nu_2(y_2):\fPush (x.f_1,x.f_2) \\
    \Gamma,a:A\types \mu(a):\id[\fPush(x.f_1,x.f_2)]{\nu_1(f_1[a/x])}{\nu_2(f_2[a/x])}}\and
    \inferrule{\Gamma\types A\type \\ \Gamma\types B_1\type \\ \Gamma\types B_2\type \\\\
      \Gamma,x:A\types f_1:B_1 \\ \Gamma,x:A \types f_2:B_2 \\\\ \Gamma, u:\fPush(x.f_1,x.f_2) \types C\type \\
      \Gamma, y_1:B_1\types t_1:C[\nu_1(y_1)/u] \\ \Gamma, y_2:B_2\types t_2:C[\nu_2(y_2)/u] \\
      \Gamma, x:A \types m:\idover[C]{t_1[f_1(a)/y_1]}{{t_2[f_2(a)/y_2]}}{\mu(x)}}
    {\Gamma,p:\fPush(x.f_1,x.f_2)\types \mathsf{pe}(u.C,y_1.t_1,y_2.t_2,a.m,p) : C[p/u] \\
    \Gamma,b_1:B_1 \types \mathsf{pe}(u.C,y_1.t_1,y_2.t_2,a.m,\nu_1(b_1)) \jdeq t_1[b_1/y_1] \\
    \Gamma,b_2:B_2 \types \mathsf{pe}(u.C,y_1.t_1,y_2.t_2,a.m,\nu_2(b_2)) \jdeq t_2[b_2/y_2] \\
    \Gamma,a:A \types \ap_{u.\mathsf{pe}(u.C,y_1.t_1,y_2.t_2,a.m,u)}(a) \jdeq m[a/x]}\and
  \end{mathpar}
  \caption{Pushouts}
  \label{fig:pushouts}
\end{figure}

We now briefly sketch the local universe coherence theorems for all of these structures; they are all straightforward applications of the techniques of~\cite{lw:localuniv}.

\begin{thm}
  If $\C$ has a stable class of identity types and satisfies~\eqref{eq:lf}, then $\C_!$ has strictly stable identity types.
\end{thm}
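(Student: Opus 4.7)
The plan is to apply the standard local universes recipe of~\cite{lw:localuniv}: for each piece of identity-type structure on a type in $\C_!$, pick once and for all a weakly stable realization living over the universal data in $\C$, and then obtain the structure on a specific type by reindexing along its classifying map. Strict stability in $\C_!$ then holds automatically, since substitution in $\C_!$ is implemented by strict composition of classifying maps.

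In more detail, given $A = (V_A, E_A, \name{A}) \in \T_!(\Gamma)$, I would first pick some $\Id_{E_A} \in \G_\Id(E_A)$ (nonempty by hypothesis), and declare $\Id_A \in \T_!(\Gamma.A.A)$ to have local universe $V_A.E_A.E_A$, type part $\Id_{E_A}$, and classifying map the canonical projection $\Gamma.A.A \to V_A.E_A.E_A$ coming from the defining pullback. Since both the local universe and its type part depend only on $(V_A, E_A)$ and not on $\Gamma$ or $\name{A}$, reindexing by any $\sigma : \Delta \to \Gamma$ in $\C_!$ leaves them fixed and only precomposes the classifying map, giving strict stability on the nose. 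Likewise, pick some $\r_{E_A} \in \G_\r(E_A, \Id_{E_A})$, a section of $\Id_{E_A}[\delta]$ in $\C$, and take its pullback along $\name{A}$ as the reflexivity term in $\C_!$; strict stability of reflexivity follows by the same reasoning.

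The main obstacle is the eliminator $\J$: given $C = (V_C, E_C, \name{C}) \in \T_!(\Gamma.A.A.\Id_A)$ together with a section $c$ of $C$ over the reflexivity locus, I need a strictly stable $\J$-section of $C$ itself. The difficulty is that weakly stable $\J$ in $\C$ applies to types lying over $V_A.E_A.E_A.\Id_{E_A}$, but $E_C$ lives over the \emph{a priori} unrelated object $V_C$, so the weakly stable eliminator cannot be invoked directly on $E_C$. The remedy is to use assumption~\eqref{eq:lf}: form in $\C$, as a suitable dependent exponential of display maps over $V_A.E_A.E_A.\Id_{E_A}$, a single object that classifies pairs (type over $V_A.E_A.E_A.\Id_{E_A}$, section of its restriction along $\delta \circ \r_{E_A}$). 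Apply weakly stable $\J$ once to the universal such type-with-section over this local universe to get a universal eliminator section, and then reindex this universal section along the classifying map of the concrete pair $(C, c)$ (built by pairing $\name{C}$ with the data encoding $c$) to obtain the required $\J$-section of $C$ in $\C_!$.

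Because this whole construction takes place once in $\C$ and enters $\C_!$ only via reindexing along composition of classifying maps, the resulting $\Id$, $\r$ and $\J$ are strictly stable under substitution; the computation rule $\J$ on reflexivity holds because weakly stable $\J$ already satisfies it in $\C$ and both sides are obtained by reindexing the same universal equation. The main bookkeeping to check is that the classifying-map data for $(C, c)$ really does factor through the universal classifier built from~\eqref{eq:lf}, which is a routine unfolding of dependent exponentials of display maps.
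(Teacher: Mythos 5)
Your proposal is correct and follows essentially the same route as the paper: choose the good identity type and reflexivity term once over the universal context $V_A.E_A.E_A$, and for the eliminator build (via~\eqref{eq:lf}) a local universe classifying the pair of the classifying map for $C$ and the section over the reflexivity locus, apply the weakly stable $\J$ once in that universal case, and reindex. The paper's proof is just the explicit iterated-$\Sigma$-type presentation of the local universe you describe, following \cite[Theorem 3.4.3.2]{lw:localuniv} with the Frobenius types omitted.
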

\begin{proof}
  Just like~\cite[Theorem 3.4.3.2]{lw:localuniv}, only simpler due to the absence of Frobenius.
  If $A\in\T_!(\Gamma)$ are represented by $E_A\in\T(V_A)$, then $\Id_A\in\T_!(\Gamma.A.A)$ is represented by a chosen good identity type $\Id_{E_A}\in \T(V_A.E_A.E_A)$, with reflexivity term also chosen in the same universal case.
  The local universe for the elimination and computation rules is just as in~\cite[Theorem 3.4.3.2]{lw:localuniv} without the types ``$B_i$'':
  \begin{align*}
    V = [ & a:V_A, \\
    & c:\prod x,x':E_A(a), y:\Id_{E_A}(a,x,x'). V_C,\\
    & d:\prod x:E_A(a). E_C(c(x,x,\r_{E_A}(a,x)))].
  \end{align*}
  (As in~\cite{lw:localuniv}, we express local universes as iterated $\Sigma$-types in the internal extensional type theory of $\C$.)
\end{proof}

\begin{thm}
  If $\C$ has a stable class of dependent identity types and satisfies~\eqref{eq:lf}, then $\C_!$ has strictly stable dependent identity types.
\end{thm}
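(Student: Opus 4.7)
The plan is to follow the template of the preceding theorem, applying the local universes technique of~\cite{lw:localuniv}. Suppose $A\in\T_!(\Gamma)$ is represented by $E_A\in\T(V_A)$, $B\in\T_!(\Gamma.A)$ is represented by $E_B\in\T(V_B)$, and the strictly stable $\Id_A\in\T_!(\Gamma.A.A)$ constructed in the previous theorem is represented by the chosen good $\Id_{E_A}\in\T(V_A.E_A.E_A)$ with reflexivity term $\r_{E_A}$. The goal is to define the strictly stable $\Id^A_B$ by choosing once and for all a good dependent identity type in a suitable ``universal'' case, and then reindexing.

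First I set up the universal base. Using the internal extensional type theory of $\C$ (available by~\eqref{eq:lf}), form the local universe
\[ V = [\, a:V_A,\; b:\prod x:E_A(a).\, V_B \,], \]
equipped with the dependent type $E_B(b(x))$ over $V.E_A(a)$ classifying the fiber of $B$ over a point of $A$. The names $\name{A}:\Gamma\to V_A$ and $\name{B}:\Gamma.A\to V_B$ combine, by currying along $E_A$, into a single canonical map $\Gamma\to V$. Applying the given stable class of dependent identity types in this universal case yields chosen $\Id^{E_A}_{E_B}$ and $\r^{E_A}_{E_B}$ over the relevant comprehension of $V$; the strictly stable $\Id^A_B$ and $\r^A_B$ in $\T_!$ are then defined by reindexing these along the canonical map. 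Strict stability under further substitution $\sigma:\Delta\to\Gamma$ is automatic, since reindexing in $\T_!$ is by composition of names.

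For the eliminator and its computation rule, enlarge $V$ to a local universe whose inhabitants also package the remaining premises of the rule: a classifier $c$ for the motive $C\in\T_!(\Gamma.A.A.\Id_A.B.B.\Id^A_B)$ and a section $d$ providing the input term on the reflexivity fiber. This is precisely the dependent analogue of the local universe displayed in the preceding proof, with additional $b$- and $u$-components tracking the two copies of $B$. Applying weak stability in this universal case once yields a chosen good extension; the strictly stable eliminator for each concrete instance is then obtained by reindexing, and its computation equation holds judgmentally because both sides arise from the same reindexing of the chosen universal extension. Uniqueness (singleton families) is by construction, since ``the'' good structure in $\T_!$ is declared to be the reindexed one.

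The only real obstacle is bureaucratic: writing down $V$ and its enlargement in a way that correctly encodes the dependence of $B$ on $A$ and the intermediate placement of $\Id_A$ between the two copies of $A$ and the two copies of $B$ in the comprehension chain. Beyond this, the argument is routine and entirely parallel to the non-dependent case.
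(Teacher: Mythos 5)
Your proposal is correct and follows essentially the same route as the paper: the formation/introduction local universe is the same $[\,a:V_A,\ b:(V_B)^{E_A(a)}\,]$ (i.e.\ $V_A\triangleleft V_B$), the structure is chosen once in the universal case and reindexed by composition of names, and the elimination/computation local universe is the enlargement by a classifier $c$ for the motive (now indexed over $x,x'$, $y:\Id_{E_A}$, $u,u'$, and $z:\Id^{E_A}_{E_B}$) together with the section $d$ on the reflexivity fiber. The paper simply writes out that enlarged universe explicitly where you describe it in words; there is no substantive difference.
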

\ifmpcps\begin{proof*}\else\begin{proof}\fi
  If $A\in\T_!(\Gamma)$ and $B\in\T_!(\Gamma.A)$ is represented by $E_A\in\T(V_A)$ and $E_B\in\T(V_B)$, then $\Id^A_B\in\T_!(\Gamma.A.A.\Id_A.B.B)$ is represented by a chosen good dependent identity type for $E_B$ over $E_A$, with the local universe
  \begin{align*}
    [ a:V_A, b: (V_B)^{E_A(a)} ]
  \end{align*}
  with dependent reflexivity term likewise chosen in this case.
  (Note that this local universe is the same one $V_A \triangleleft V_B$ used in~\cite{lw:localuniv} for $\Pi$-types and $\Sigma$-types.)
  The local universe for the elimination and computation rules is
  \begin{align*}
    [ & a:V_A, \\
    & b:\prod x:E_A(a). V_B,\\
    & c:\prod x,x':E_A(a), y:\Id_{E_A}(a,x,x'), u:E_B(b(x)), u':E_B(b(x')), z:\Id^{E_A}_{E_B}(a,x,x',y,b,u,u'). V_C,\\
    & d:\prod x:E_A(a), u:E_B(b(x)) . E_C(c(x,x,\r_{E_A}(a,x),u,u,\r^{E_A}_{E_B}(a,x,u)))].\qedhere
  \end{align*}
\ifmpcps\end{proof*}\else\end{proof}\fi

It is perhaps worth emphasizing how the above local universes are obtained essentially algorithmically from the rules in \cref{fig:depid}, once given that the latter are written in the style of \cref{rmk:rule-style}.
Namely, each premise of a rule corresponds to one term in the iterated $\Sigma$-type, which is a $\Pi$-type of the consequent of that premise over the additional variables (beyond the arbitrary context $\Gamma$) in the context of that premise.

\begin{thm}
  If $\C$ has a stable class of identity applications and satisfies~\eqref{eq:lf}, then $\C_!$ has strictly stable identity applications.
\end{thm}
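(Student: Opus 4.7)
The plan is to follow the same local-universe pattern used in the two preceding theorems; since identity applications add only a term (no new type family), the argument is in fact slightly lighter. I will use the strictly stable identity and dependent identity types on $\C_!$ already provided by those theorems to frame the construction.

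First, reading off the premises of the rule in \cref{fig:ap} in the style of \cref{rmk:rule-style}, I identify the local universe representing a type $A$, a dependent type $B$ over $A$, and a section $f$ of $B$:
\begin{align*}
V = [\,& a : V_A, \\
& b : \prod x : E_A(a).\, V_B, \\
& f : \prod x : E_A(a).\, E_B(b(x))\,].
\end{align*}
This $V$ exists in $\C$ by \eqref{eq:lf}, expressed in the internal extensional type theory of $\C$ as in \cite{lw:localuniv}. Over $V$ we have the universal triple $(a, b, f)$.

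Second, applying the strictly stable $\Id_{E_A}$ on $V_A$ and the strictly stable $\Id^{E_A}_{E_B}$ over it (inherited from the previous two theorems and reindexed to $V$), together with the non-emptiness of the family of good identity applications from the weakly stable structure on $\C$, I pick once and for all a good identity application $\ap^{\mathrm{univ}}$ for the universal $f$ over $V$. For any $\Gamma$ in $\C_!$ equipped with $A$, $B$, and $f$, this data is classified by a single map $\name{(A,B,f)} : \Gamma \to V$, and I define the strictly stable $\ap_{x.f}$ in $\C_!$ as the pullback of $\ap^{\mathrm{univ}}$ along this map.

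Strict stability under substitution is then immediate: reindexing along $\sigma : \Delta \to \Gamma$ in $\C_!$ is by construction just precomposition of classifying maps, and pullback of $\ap^{\mathrm{univ}}$ commutes with composition on the nose, so the $\ap$ associated to the reindexed data coincides strictly with the reindexing of the original $\ap$. The two commutative squares of \cref{defn:ap}, which yield the judgmental computation rule $\ap_{x.f}(a, a, \refl_a) \jdeq \drefl_{f[a/x]}$, are preserved by pullback from the universal case. The main point requiring care is a bookkeeping one rather than a genuine obstacle: when selecting $\ap^{\mathrm{univ}}$, I must use exactly the same strictly stable $\Id_A$, $\r_A$, $\Id^A_B$, and $\r^A_B$ that $\C_!$ has already inherited, so that the resulting commutative squares match the strict structure already present in $\C_!$ rather than some other representative.
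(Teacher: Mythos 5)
Your proof is correct and matches the paper's: the local universe $[\,a:V_A,\ b:\prod x:E_A(a).\,V_B,\ f:\prod x:E_A(a).\,E_B(b(x))\,]$ is exactly the one the paper writes down, and the remaining steps you spell out (choosing a good identity application in the universal case and reindexing by composition of classifying maps) are precisely the standard local-universes argument the paper leaves implicit. Your closing remark about using the already-strictified $\Id_A$, $\r_A$, $\Id^A_B$, $\r^A_B$ when selecting the universal $\ap$ is the right bookkeeping point and is consistent with how the paper layers these theorems.
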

\ifmpcps\begin{proof*}\else\begin{proof}\fi
  The local universe is
  \begin{align*}
    [ & a:V_A \\
    & b:\prod x:E_A(a). V_B,\\
    & f:\prod x:E_A(a). E_B(b(x)) ].\qedhere
  \end{align*}
\ifmpcps\end{proof*}\else\end{proof}\fi

\begin{thm}\label{thm:lu-pushout}
  If $\C$ has a stable class of typal pushouts and satisfies~\eqref{eq:lf}, then $\C_!$ has strictly stable typal pushouts.
\end{thm}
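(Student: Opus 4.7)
The plan is to follow the same algorithmic recipe as the three preceding coherence theorems: for each rule in Figure~\ref{fig:pushouts}, take as a local universe the iterated $\Sigma$-type whose components correspond to the premises of the rule, make one universal choice of good structure there, and then read off strict stability from the fact that in $\C_!$ every instance arises as a reindexing of the universal case along classifying maps. The previous three theorems already provide strictly stable identity types, dependent identity types, and identity applications, so the pieces of data in Definition~\ref{defn:pushout} that refer to these structures are canonically determined and need no further selection.

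For the formation rule, take the local universe
\begin{align*}
V \;=\; [\,& a:V_A,\; b_1:V_{B_1},\; b_2:V_{B_2}, \\
& f_1:\prod x:E_A(a).\, E_{B_1}(b_1),\\
& f_2:\prod x:E_A(a).\, E_{B_2}(b_2)\,],
\end{align*}
and over the corresponding universal type-in-context pick a good pushout $D$ with good injections $\nu_1,\nu_2$ and good glueing data $\mu$. For the elimination and computation rule, extend $V$ to a larger local universe
\begin{align*}
V' \;=\; [\,& \text{entries of } V,\\
& c:\prod u:E_D.\, V_C,\\
& t_1:\prod y:E_{B_1}(b_1).\, E_C(c(\nu_1(y))),\\
& t_2:\prod y:E_{B_2}(b_2).\, E_C(c(\nu_2(y))),\\
& m:\prod x:E_A(a).\, \Id^{E_D}_{E_C}(\nu_1 f_1(x),\nu_2 f_2(x),\mu(x),t_1(f_1 x),t_2(f_2 x))\,],
\end{align*}
and in this universal instance choose a good section $\mathsf{pe}$ together with its computation equations.

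The main piece of bookkeeping is writing out the endpoints of the dependent identity type in the entry for $m$: they must be obtained by threading the universal $\mu$, $\nu_i$ and the given $f_i$ through the chosen $\Id^{E_D}_{E_C}$, and analogous care is needed to express the $\ap$-computation equation. Once those universal choices are fixed, every typal pushout in $\C_!$ over an arbitrary context $\Gamma$ arises by pulling back the universal one along a unique classifying map $\Gamma\to V$, and its elimination structure by pulling back along the corresponding classifying map $\Gamma\to V'$; strict stability under substitution then follows automatically from the strict associativity of composition of classifying maps, exactly as in the local universes method of~\cite{lw:localuniv}.
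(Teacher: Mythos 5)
Your proposal is correct and follows essentially the same route as the paper: the paper's proof consists precisely of exhibiting the two local universes you wrote down (one for formation/introduction with entries $a, b_1, b_2, f_1, f_2$, and one for elimination/computation extending it by $c, t_1, t_2, m$, with $m$ landing in the dependent identity type indexed over all the preceding data), and then invoking the standard local-universes mechanism of choosing weakly stable structure once in the universal case and reindexing. The only difference is that the paper spells out the full list of indices in the type of $m$, which you correctly flagged as the remaining bookkeeping.
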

\begin{proof}
  The local universe for the formation and introduction rules is
  \begin{align*}
    [ & a:V_A, b_1:V_{B_1}, b_2:V_{B_2}, \\
    &f_1: \prod x:E_A(a). E_{B_1}(b_1),\\
    &f_2: \prod x:E_A(a). E_{B_2}(b_2) ].
  \end{align*}
  And the local universe for the elimination and computation rules is
  \begin{align*}
    [ & a:V_A, b_1:V_{B_1}, b_2:V_{B_2}, \\
    &f_1: \prod x:E_A(a). E_{B_1}(b_1),\\
    &f_2: \prod x:E_A(a). E_{B_2}(b_2),\\
    &c : \prod u:E_{\fPush(f_1,f_2)}(a,b_1,b_2,f_1,f_2). V_C, \\
    &t_1 : \prod y_1:E_{B_1}(b_1). E_C(c(\nu_1(y_1))), \\
    &t_2 : \prod y_2:E_{B_2}(b_2). E_C(c(\nu_2(y_2))), \\
    &m : \prod x:E_A(a). \Id^{\fPush(f_1,f_2)}_C(a,b_1,b_2,f_1,f_2,c,\nu_1(f_1(x)),\nu_2(f_2(x)),\mu(x),t_1(f_1(x)),t_2(f_2(x)))
    ].
  \end{align*}
\end{proof}

\begin{thm}
  For any good model category $\sM$, the split comprehension category $\fibmbang$ has strictly stable typal pushouts, relative to the strictly stable identity types, dependent identity types, and identity applications obtained by strictifying its canonical weakly stable ones.
\end{thm}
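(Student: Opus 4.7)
The plan is to assemble the statement from the structural pieces already assembled in \crefrange{sec:pushouts}{sec:pushouts-type-theory}, using the local universes machinery as a black box.

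First, I would collect the weak-stability inputs. By \cref{thm:stable-id,thm:stable-dep-id,thm:stable-ap}, any good model category $\sM$ carries on $\fibm$ canonical stable classes of identity types, dependent identity types, and identity applications. By the construction of the previous section (the $\ivl$-typal pushouts of \cref{defn:ivl-pushout}, shown to exist and to be weakly stable in any good model category) together with \cref{thm:model-pushout}, the $\ivl$-typal pushouts form a stable class of typal pushouts relative to these canonical stable classes. So $\fibm$ carries all four of the required weakly stable structures simultaneously and compatibly.

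Next, I would check condition~\eqref{eq:lf} for $\fibm$: this is immediate, because $\sM$ is locally cartesian closed (and in particular finitely complete), so $\fibm$ has finite products, and dependent exponentials of display maps along display maps exist since fibrations are stable under such constructions in a good model category. This is exactly what is needed in order to apply the local universes theorems.

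Now I would apply the coherence theorems in sequence, each to the left adjoint splitting $\fibmbang = \C_!$: the theorem for identity types gives strictly stable identity types in $\fibmbang$, the one for dependent identity types provides strictly stable dependent identity types over them, the one for identity applications gives strictly stable identity applications over both, and finally \cref{thm:lu-pushout} gives strictly stable typal pushouts relative to all three strictifications. The key point here is that the local universes construction is \emph{compatible}: each strictly stable structure produced is obtained by choosing one good representative in the universal case at the relevant local universe, and the pushout local universe of \cref{thm:lu-pushout} refers to the identity type, dependent identity type, and identity application structures through $\Id^{\fPush(f_1,f_2)}_C$, $\mu$, and $\ap$ in exactly the same universal context in which those earlier structures are chosen. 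Hence the strict pushout structure is literally defined \emph{relative to} the strictifications of the canonical weakly stable identity, dependent identity, and $\ap$ structures, as required.

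The only place requiring any care, and the closest thing to a main obstacle, is this last compatibility: one must check that the canonical weakly stable structure on $\fibm$ assembles into a single stable class of typal pushouts in the sense of \cref{defn:pushout} (not merely the separate existence of each ingredient), so that a single invocation of the coherence machinery strictifies the whole package at once. But this is precisely the content of \cref{thm:model-pushout}, so the theorem follows.
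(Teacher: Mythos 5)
Your proposal is correct and follows essentially the same route as the paper, which proves this theorem simply by combining \cref{thm:model-pushout} (the canonical weakly stable structures on $\fibm$ assemble into a stable class of typal pushouts) with \cref{thm:lu-pushout} (the local universes coherence theorem for typal pushouts). Your additional remarks on \eqref{eq:lf} and on the compatibility of the successive strictifications are accurate elaborations of what the paper leaves implicit.
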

\begin{proof}
  Combine \cref{thm:model-pushout,thm:lu-pushout}.
\end{proof}

This is almost, but not quite, the result we want.
The pushout rules in \cref{fig:pushouts} differ from those commonly used (e.g.\ in~\cite{hottbook}) by the presence of the additional rules in \cref{fig:depid,fig:ap}.
The usual approach is instead to start with only identity types as in \cref{fig:id}, and then \emph{define} dependent identity types using one of the methods~\ref{item:idover1}--\ref{item:idover3} listed on page~\pageref{idover}, similarly define $\ap$ using the eliminator of identity types, and then state rules such as those in \ref{fig:pushouts} relative to these defined structures.

Such definitions certainly give \emph{some} classes of dependent identity types and identity applications, which are strictly stable if the identity types we started with are strictly stable.
However, if such constructions are performed in $\fibmbang$, the resulting strictly stable classes will not generally be the \emph{same} as those obtained from the canonical weakly stable ones by the local universes construction.
They will indeed be \emph{equivalent}, and we can transfer the structure of a pushout across such an equivalence, but at the cost of weakening the computation rule for the path-constructor from a judgmental equality to the existence of a path, as follows.

\begin{defn}\label{defn:wk-pushouts}
  A \textbf{stable class of weak typal pushouts} in a comprehension category $(\C,\T)$, relative to given stable classes of identity types, dependent identity types, and identity applications, consists of all the data and properties in \cref{defn:pushout} except that instead of $\ap_s\circ \mu = m$, we have for any good identity type $\Id_{\Id^D_C}\in\T(\Gamma.D.D.\Id_D.C.C.\Id^D_C.\Id^D_C)$ a non-empty family of ``good'' maps $v:A\to \Id_{\Id^D_C}$ over $(\ap_s\circ \mu, m)$.
  If $\C$ is split and each family of good structures is a singleton, we say that $\C$ has \textbf{strictly stable weak typal pushouts}.
\end{defn}

Of course, typal pushouts are also weak typal pushouts, since if $\ap_s\circ \mu = m$ we can compose this map with $\r$ to get $v$.
The corresponding modified elimination rule in syntactic type theory is shown in~\cref{fig:wk-pushout}.
In the case when the dependent identity types and identity applications are defined from the ordinary identity types as on page~\pageref{idover}, this yields exactly the notion of pushout type used in~\cite[\sec6.8]{hottbook}.
Some reasons for choosing a weak computation rule for the path-constructor are adumbrated in~\cite[\sec6.2 and Chapter 6 Notes]{hottbook}, one of which (non-strictness of the equivalence between left and right homotopies) is roughly the same reason such a rule appears here.

\begin{figure}
  \centering
  \begin{mathpar}
    \inferrule{\Gamma\types A\type \\ \Gamma\types B_1\type \\ \Gamma\types B_2\type \\\\
      \Gamma,x:A\types f_1:B_1 \\ \Gamma,x:A \types f_2:B_2 \\\\ \Gamma, u:\fPush(x.f_1,x.f_2) \types C\type \\
      \Gamma, y_1:B_1\types t_1:C[\nu_1(y_1)/u] \\ \Gamma, y_2:B_2\types t_2:C[\nu_2(y_2)/u] \\
      \Gamma, x:A \types m:\idover[C]{t_1[f_1(a)/y_1]}{{t_2[f_2(a)/y_2]}}{\mu(x)}}
    {\Gamma,a:A \types \mathsf{pec}(u.C,y_1.t_1,y_2.t_2,a.m,a) : \id{\ap_{u.\mathsf{pe}(u.C,y_1.t_1,y_2.t_2,a.m,u)}(a)}{m[a/x]}}\and
  \end{mathpar}
  \caption{Computation for weak pushouts}
  \label{fig:wk-pushout}
\end{figure}

The following theorem could be stated for weakly stable structures in addition to strict ones, but we only need to apply it after the local universes splitting has occurred, so we don't bother with that generality.

\begin{thm}\label{thm:transfer-depid}
  Let $\C$ be a split comprehension category equipped with strictly stable identity types.
  If $\C$ has strictly stable weak typal pushouts with respect to one choice of strictly stable dependent identity types and identity applications, then with respect to any other choice of strictly stable dependent identity types and identity applications it also has strictly stable weak typal pushouts.
\end{thm}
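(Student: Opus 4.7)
The plan is to transfer the weak pushout structure along the canonical comparisons between the two choices of dependent identity types and identity applications. The formation and introduction data of a weak typal pushout --- the type $\fPush(f_1,f_2)$, the injections $\nu_i$, and the glueing map $\mu : A \to \Gamma.D.D.\Id_D$ --- depend only on the (fixed) identity types, not on dependent identity types or identity applications, so these can be used verbatim in both structures. Only the elimination and computation data need to be transferred.

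Writing $\Id^A_B, \ap$ for one stable class and $\widetilde{\Id^A_B}, \widetilde{\ap}$ for the other, I would first use the elimination principle for each dependent identity type, applied to the dependent reflexivity term of the other, to construct strictly stable comparison maps $\tau : \Id^A_B \to \widetilde{\Id^A_B}$ and $\widetilde{\tau} : \widetilde{\Id^A_B} \to \Id^A_B$ (as morphisms over $\Gamma.A.A.\Id_A.B.B$), together with paths witnessing that $\tau$ and $\widetilde{\tau}$ are mutually inverse. Since $\tau$ sends the dependent reflexivity of $\Id^A_B$ to that of $\widetilde{\Id^A_B}$ strictly (by the computation rule of the identity elimination used to define it), for any section $f$ the composite $\tau \circ \ap_f$ satisfies the defining equations of an identity application relative to $\widetilde{\Id^A_B}$, and hence agrees with $\widetilde{\ap}_f$ up to a canonical path produced by a further identity elimination.

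To transfer the pushout elimination, given tilde elimination data $(t_1, t_2, \widetilde{u})$ with $\widetilde{u} : A \to \Gamma.D.D.\Id_D.C.C.\widetilde{\Id^D_C}$ over $(\mu, t_1, t_2)$, set $u := \widetilde{\tau} \circ \widetilde{u}$, which is elimination data for the non-tilde structure. The assumed weak pushout yields a section $s$ with $s \circ \nu_i = t_i$ strictly and a good path $v : A \to \Id_{\Id^D_C}$ from $\ap_s \circ \mu$ to $u$. Postcomposing $v$ with the functorial action of $\tau$ on paths gives a path from $\tau \circ \ap_s \circ \mu$ to $\tau \circ \widetilde{\tau} \circ \widetilde{u}$ in $\Id_{\widetilde{\Id^D_C}}$. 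Concatenating on the left with the path $\widetilde{\ap}_s \circ \mu \simeq \tau \circ \ap_s \circ \mu$ and on the right with the inverse-equivalence path $\tau \circ \widetilde{\tau} \circ \widetilde{u} \simeq \widetilde{u}$ yields the required good path $\widetilde{\ap}_s \circ \mu \simeq \widetilde{u}$. The equations $s \circ \nu_i = t_i$ involve no dependent identity types, so the same section $s$ works.

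The main obstacle will be maintaining strict stability throughout. Since each family of good structures in the target class must be a singleton and must be strictly stable under reindexing, we are forced to \emph{define} the good tilde elimination data canonically as the output of the above recipe. This is well-defined because each of its ingredients --- the identity eliminations producing $\tau$, $\widetilde{\tau}$, and the path $\widetilde{\ap}_s \simeq \tau \circ \ap_s$, the original pushout elimination, and the functorial and concatenation operations implemented via further identity eliminations --- is itself strictly stable by hypothesis, and so the composite is a strictly stable function of the input data.
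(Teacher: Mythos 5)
Your proposal is correct and follows essentially the same route as the paper: your comparison maps $\tau,\widetilde{\tau}$ are the paper's $h,k$ (built by dependent-identity elimination against the reflexivity terms), the homotopy $\widetilde{\ap}_f \sim \tau\circ\ap_f$ is obtained the same way, the eliminator data is transferred by precomposition with $\widetilde{\tau}$ (the paper's $k$), and the final path is assembled by the same three-step concatenation, with strict stability following because every ingredient is strictly stable.
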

\begin{proof}
  We use the ordinary notation for the given identity types, dependent identity types, identity applications, and typal pushouts, and we write $\idovertwo[B]{x}{y}{e}$ and $\aptwo{f}$ and so on for some other choice of dependent identity types and identity applications.
  Using the eliminators for dependent identity types in both directions (or equivalently the uniqueness-up-to-homotopy of weak factorization systems), we obtain maps in both directions making all the triangles commute:
  \[
  \begin{tikzcd}
    & \Gamma.A.A.\Id_A.B.B.\Id^A_B \ar[dr,fib] \ar[dd,shift left,"h"]\\
    \Gamma.A.B \ar[ur,"{\r^A_B}"] \ar[dr,swap,"{\rtwo{A}{B}}"] && \Gamma.A.A.\Id_A.B.B\\
    & \Gamma.A.A.\Id_A.B.B.\Idtwo{A}{B} \ar[ur,fib] \ar[uu,shift left,"k"]
  \end{tikzcd}
  \]
  Similarly, we obtain homotopies over $\Gamma.A.A.\Id_A.B.B$ between the round-trip composites in both directions (i.e.\ maps $\Gamma.A.B \to \Gamma.A.A.\Id_A.B.B.\Id^A_B.\Id^A_B.\Id_{\Id^A_B}$ and similarly), giving a homotopy equivalence $\Id^A_B \simeq \Idtwo{A}{B}$.
  And if we have a section $f$ of $B\in\T(\Gamma.A)$, then the composite
  \[ \Gamma.A.A.\Id_A \xto{\ap_f} \Gamma.A.A.\Id_A.B.B.\Id^A_B \xto{h} \Gamma.A.A.\Id_A.B.B.\Idtwo{A}{B} \]
  is a lift of $(f,f)$, and has the property that when composed with $\r_A : \Gamma.A \to \Gamma.A.A.\Id_A$ it becomes $\rtwo{A}{B}$.
  Since $\aptwo{f}$ also has these properties, we can use identity elimination to produce a homotopy $h\circ \ap_f \sim \aptwo{f}$ over $(f,f)$, i.e.\ a lift of $(h\circ \ap_f,\aptwo{f})$ to $\Gamma.A.A.\Id_A.B.B.\Idtwo{A}{B}.\Idtwo{A}{B}.\Id_{\Idtwo{A}{B}}$.

  Now, note that only the eliminator and computation rules for pushouts refer to dependent identity types; the formation and introduction rules can be exactly as given.
  For the eliminator, all the structure on $C$ in the second case can be used as-given except for $u:A\to \Gamma.D.D.\Id_D.C.C.\Idtwo{D}{C}$, which we compose with $k$ to produce data for the given eliminator.
  This produces a section of the desired form, satisfying the computation rules $s\circ \nu_i = t_i$ and with a homotopy $\ap_s\circ \mu \sim k\circ m$.
  Postcomposing this homotopy with $h$ (using ordinary $\Id$-elimination) and concatenating with the above-constructed homotopies $h\circ \ap_s \sim \aptwo{s}$ and $h\circ k \sim 1$, we get
  \[ \aptwo{s} \circ \mu \sim h\circ \ap_s \circ \mu \sim h\circ k\circ m \sim m \]
  which is what we want.
  Since everything that went into the construction was assumed strictly stable, so is the result.
\end{proof}

\begin{cor}
  For any good model category $\sM$, the split comprehension category $\fibmbang$ has strictly stable weak typal pushouts, relative to the strictly stable identity types obtained by strictifying its canonical weakly stable ones, and the dependent identity types obtained from any of~\ref{item:idover1}--\ref{item:idover3} on page~\pageref{idover} and the identity applications obtained from $\Id$-elimination.\qed
\end{cor}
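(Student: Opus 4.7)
The plan is to chain together the previous theorem with the transfer result \cref{thm:transfer-depid}, so the main task is setting up the alternative strictly stable dependent identity types and identity applications to which we transfer.

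First, by the previous theorem, $\fibmbang$ has strictly stable typal pushouts relative to the strictly stable dependent identity types and identity applications obtained by strictifying the canonical weakly stable classes from \cref{thm:stable-dep-id,thm:stable-ap}. Since strictly stable typal pushouts are in particular strictly stable weak typal pushouts (by composing $\ap_s \circ \mu = m$ with $\r$ to obtain the witness $v$ required by \cref{defn:wk-pushouts}), we have strictly stable weak typal pushouts relative to this canonical choice.

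Next, I would verify that each of the constructions~\ref{item:idover1}--\ref{item:idover3} on page~\pageref{idover}, together with $\Id$-elimination for $\ap$, really does yield strictly stable classes of dependent identity types and identity applications in the sense of \cref{defn:dep-id,defn:ap}. Each construction is performed internally using only the type formers $\Sigma$, $\Pi$, and $\Id$, all of which $\fibmbang$ already admits with strictly stable structure (by \cref{thm:gmc-tt} and the first coherence theorem of this section). Explicitly, given $A \in \T_!(\Gamma)$ and $B \in \T_!(\Gamma.A)$, one writes down the transport-based (or $\J$-eliminator-based) definition of $\idovertwo[B]{b_1}{b_2}{e}$ using the strictly stable $\Id$ and its eliminator, and then one singleton family of good structures is precisely this canonical choice; similarly for the dependent reflexivity terms, the corresponding $\J$-style eliminators, and $\aptwo{f}$ defined by $\J$ on $f$. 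Strict stability under reindexing is immediate, because it is inherited from the strict stability of the type formers used in the definition.

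Having assembled these two alternative strictly stable classes, I would invoke \cref{thm:transfer-depid} to transfer the strictly stable weak typal pushouts from the canonical choice of dependent identity types and identity applications to this new one. The hypotheses of \cref{thm:transfer-depid} are exactly met: $\fibmbang$ is a split comprehension category with a fixed strictly stable class of identity types (the canonical one), and we have two different strictly stable choices of dependent identity types and identity applications above it. The theorem concludes that weak typal pushouts (being defined only up to homotopy in the path-constructor computation rule) transfer along the homotopy equivalence $\Id^A_B \simeq \Idtwo{A}{B}$ induced by $\J$-elimination, giving the result.

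The main obstacle, and the only step that needs more than bookkeeping, is checking that the syntactic definitions~\ref{item:idover1}--\ref{item:idover3} and the $\J$-based $\ap$ do in fact produce data satisfying \cref{defn:dep-id,defn:ap} in $\fibmbang$, which reduces to verifying the required universal-type-theoretic properties (non-empty families of good extensions) from the corresponding properties of the underlying identity types; this is routine for a split comprehension category carrying strictly stable identity types, $\Sigma$, and $\Pi$.
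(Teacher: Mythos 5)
Your proposal is correct and follows exactly the paper's intended argument: the corollary is stated with an immediate \qed, relying on the preceding theorem (strictly stable typal pushouts relative to the canonical strictified classes), the observation that typal pushouts are in particular weak typal pushouts, the remark that the syntactic constructions~\ref{item:idover1}--\ref{item:idover3} and the $\J$-based $\ap$ yield strictly stable classes, and \cref{thm:transfer-depid}. The only difference is that you spell out the verification that those syntactic definitions satisfy \cref{defn:dep-id,defn:ap}, which the paper dismisses in one sentence; that is a reasonable amount of added care rather than a change of route.
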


\begin{rmk}
  As noted in the introduction, from pushout types (and binary sums and the empty type) we can construct all finite (homotopy) colimits.
  In particular, the coequalizer of $f,g:A\to B$ is the pushout of $[f,g] : A+A \to B$ and the fold map $\nabla: A+A\to A$.
  Conversely, the pushout of $f_1:A\to B_1$ and $f_2:A\to B_2$ can be constructed as the coequalizer of $\inl\circ f_1, \inr\circ f_2 : A\to B_1+B_2$.
  Thus, to obtain all finite colimits we could equally well have chosen to treat coequalizers rather than pushouts; similar issues arise in the construction of both.
\end{rmk}

\section{Natural numbers}
\label{sec:natural-numbers}

As a warm-up for the recursive case, we consider the interpretation of the type of natural numbers.
Though a standard type constructor, this is not considered explicitly in~\cite{lw:localuniv}; thus we begin with its local universes theory.

\begin{defn}
  In a comprehension category $(\C,\T)$, a \textbf{natural numbers type} over $\Gamma\in\C$ is a type $\N\in\T(\Gamma)$ together with a section $\zero:\Gamma\to\Gamma.\N$ and a map $\succ:\Gamma.\N\to\Gamma.\N$ over $\Gamma$, plus an operation assigning to any type $C\in\T(\Gamma.\N)$ equipped with $z:\Gamma\to\Gamma.\N.C$ over $\zero$ and $s:\Gamma.\N.C \to \Gamma.\N.\C$ over $\succ$, a section $f$ of $C$ such that $f \circ \zero = z$ and $f\circ \succ = s \circ f$.

  We say $\C$ has \textbf{weakly stable natural numbers types} if for any $\Gamma$ there is a natural numbers type $\N\in\T(\Gamma)$ such that for any $\sigma:\Delta\to\Gamma$, $(\N[\sigma],\zero[\sigma],\succ[\sigma])$ is a natural numbers type over $\Delta$.
  Similarly, if $\C$ is split, it has \textbf{strictly stable natural numbers types} if it has an operation assigning to each $\Gamma$ a natural numbers type $\N\in\T(\Gamma)$ such that reindexing along any $\sigma:\Delta\to\Gamma$ preserves natural numbers types along with their data and specified sections.
\end{defn}

Of course, if $\C$ has a terminal object (as it usually does), then the quantification over $\Gamma$ in the latter definition follows from the special case $\Gamma=1$.
Strictly stable natural numbers types are the semantic version of the usual rules for a natural numbers type, shown in \cref{fig:nno}.

\begin{figure}
  \centering
  \begin{mathpar}
    \inferrule{ }{\Gamma\types\N\type}\and
    \inferrule{ }{\Gamma\types \zero:\N}\and
    \inferrule{ }{\Gamma,x:\N\types \succ(x):\N}\and
    \inferrule{\Gamma,x:\N\types C\type \\ \Gamma\types z:C[\zero/x] \\ \Gamma,x:\N,y:C \types s : C[\succ(x)/x]}{\Gamma,n:\N\types \nrec(x.C,z,xy.s,n) : C[n/x] \\ \Gamma\types \nrec(x.C,z,xy.s,\zero) \jdeq z\\ \Gamma,n:\N \types \nrec(x.C,z,xy.s,\succ(n)) \jdeq s[n/x,\nrec(x.C,z,xy.s,n)/y]}
  \end{mathpar}
  \caption{Natural numbers type}
  \label{fig:nno}
\end{figure}

\begin{thm}
  If $\C$ satisfies~\eqref{eq:lf} and has weakly stable natural numbers types, then $\C_!$ has strictly stable natural numbers types.
\end{thm}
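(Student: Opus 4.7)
The plan is standard local-universes coherence, following the algorithmic recipe exhibited for the preceding structures in this section (compare \cref{thm:lu-pushout}). Because the formation rule for $\N$ has no premises beyond the ambient context, the local universe needed for formation and introduction is degenerate: applying weak stability once in the universal case, we fix a natural numbers type $E_\N \in \T(1)$ over the terminal object, together with its zero $\zero: 1 \to 1.E_\N$ and successor $\succ: 1.E_\N \to 1.E_\N$. For each $\Gamma \in \C_!$ we then define $\N \in \T_!(\Gamma)$ to be represented by the triple $(1, E_\N, \name{\N})$, where $\name{\N}: \Gamma \to 1$ is the unique map; the zero and successor are obtained by reindexing those of $E_\N$ along $\name{\N}$. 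Since reindexing in $\C_!$ is composition of names, which is strictly associative, this data is strictly stable on the nose.

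For the elimination and computation rules, the local universe is built by the usual recipe of one $\Pi$-type per premise of the rule in \cref{fig:nno}:
\begin{align*}
V = [\, & c: \prod n: E_\N.\, V_C, \\
       & z: E_C(c(\zero)), \\
       & s: \prod n: E_\N,\, y: E_C(c(n)).\, E_C(c(\succ(n))) \,].
\end{align*}
Each of these iterated $\Sigma$- and $\Pi$-types exists in $\C$ by \eqref{eq:lf}, since $1.E_\N \to 1$ is a display map by construction. Over the universal context $V$ we have in hand exactly the data required by the weakly stable eliminator of $\C$: a motive (via $c$ and $E_C$), a base case $z$, and a step case $s$. Weak stability produces a section $\bar f$ of the appropriate pullback of $E_C$ satisfying $\bar f \circ \zero = z$ and $\bar f \circ \succ = s \circ \bar f$ strictly. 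Any instance of the elimination rule in $\C_!$ is packaged as a name $\Gamma \to V$, and the strictly stable eliminator is defined by reindexing $\bar f$ along this name; the strict computation rules transport along reindexing on the nose, and further substitution stability is automatic.

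The only non-routine point is verifying that the iterated products making up $V$ are genuinely constructible under \eqref{eq:lf}; but this is precisely the class of products that assumption was designed to provide, and the construction is strictly simpler than in \cref{thm:lu-pushout} because $\N$ carries no parameters. I therefore anticipate no genuine obstacle, the argument being a direct transcription of the recipe used earlier in the section, with none of the transfer-of-structure subtleties that arose in \cref{thm:transfer-depid}.
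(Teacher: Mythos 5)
Your proposal is correct and follows essentially the same route as the paper: the terminal object (supplied by \eqref{eq:lf}) serves as the local universe for formation and introduction, with a weakly stable natural numbers type chosen once over it, and the elimination/computation local universe is exactly the iterated $\Sigma$-type $[\,c:\prod x:E_\N.\,V_C,\ z:E_C(c(\zero)),\ s:\prod x:E_\N, y:E_C(c(x)).\,E_C(c(\succ(x)))\,]$ that the paper writes down. The remaining details (applying the weakly stable eliminator in the universal case and reindexing along names) are handled just as you describe.
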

\ifmpcps\begin{proof*}\else\begin{proof}\fi
  Condition~\eqref{eq:lf} provides a terminal object.
  We take this to be the local universe for the formation and introduction rules, choosing a weakly stable natural numbers type over it.
  For the elimination and computation rules, the local universe is
  \begin{align*}
    [&c: \prod x:E_{\N}. V_C,\\
    &z: E_C(c(\zero)),\\
    &s: \prod x:E_{\N}, y:E_C(c(x)) . E_C(c(\succ(x))) ].\qedhere
  \end{align*}
\ifmpcps\end{proof*}\else\end{proof}\fi

In a good model category, the obvious candidate for a natural numbers type is the countable coproduct $\coprod_{n:\dN} 1$ of copies of the terminal object, which is automatically a ``natural numbers object'' in the usual sense of category theory.
As usual, the problem is that in general this may not be fibrant (though it is in many examples, such as simplicial sets).
But unlike in the preceding sections, it does not suffice to simply fibrantly replace it.
If $\N$ is a fibrant replacement of $\coprod_{n:\dN} 1$, we can extend the zero and successor operations to it, and for any fibration $C\fib\N$ equipped with $z$ and $s$ we can find a section of $C$ over $\coprod_{n:\dN} 1$ using its universal property and then extend that section to $\N$ by lifting against the acyclic cofibration $\coprod_{n:\dN} 1 \to \N$.

However, such a lift will not generally satisfy the computation rules.
It might be possible to choose it cleverly to satisfy the computation rule for $\zero$, but for $\succ$ there is essentially no hope, since the computation rule for $\succ$ relates the section to \emph{itself}, which is impossible to obtain using a simple lifting property.
Thus, we have to be more clever.

Let $F_\N$ denote the endofunctor of $\sM$ defined by $F_\N(X) = X+1$.
Recall that an \textbf{endofunctor-algebra} for $F_\N$ is an object $X$ together with a map $F_\N(X) \to X$.
A natural numbers object in the ordinary sense is precisely an \emph{initial} such endofunctor-algebra.
And a natural numbers type over the terminal object is precisely a fibrant $F_\N$-algebra $\N$ such that any fibration $C\fib\N$ that is also a map of $F_\N$-algebras has an $F_\N$-algebra section.
This suggests that $\N$ is an ``initial fibrant $F_\N$-algebra''; we now make this precise using the technology of \emph{algebraic weak factorization systems}~\cite{gt:nwfs,garner:soa,riehl:nwfs-model}.

From now on, therefore, we will assume that $\sM$ is an \emph{excellent} model category, i.e.\ that in addition to being good it is a combinatorial model category.
This allows us to use the technology of algebraically-free monads and algebraic colimits of monads~\cite{kelly:transfinite,nlab:transfinite}.
Let $\dT_\N$ denote the algebraically-free monad on $F_\N$, which exists since $F_\N$ is an accessible endofunctor.
This means it is a monad $\dT_\N$ such that $\dT_\N$-monad-algebra structures on any object correspond bijectively to $F_\N$-endofunctor-algebra structures on that object, by precomposition with a given natural transformation $F_\N \to \dT_\N$.
By construction of $\dT_\N$, it is also accessible.

Now, by the construction of~\cite{garner:soa} and the combinatoriality of $\sM$, we have an accessible \textbf{fibrant factorization monad} $\dR$ on the arrow category $\sM^\to$, such that an arrow can be given an $\dR$-algebra structure if and only if it is a fibration, and $\dR$ preserves codomains.
In particular, when restricted to arrows with terminal codomain, we get an accessible \textbf{fibrant replacement monad} $\dR_1$ on $\sM$ itself, such that an object can be given an $\dR_1$-algebra structure if and only if it is fibrant.
Moreover, the composite of two $\dR$-algebras $X \xfib{f} Y \xfib{g} Z$ naturally acquires an $\dR$-algebra structure such that the commutative square
\[
\begin{tikzcd}
  X \ar[d,"gf",swap,fib] \ar[r,"f",fib] & Y \ar[d,"g",fib] \\ Z \ar[r,idmap]& Z
\end{tikzcd}
\]
is a morphism of $\dR$-algebras; this is noted for instance in~\cite[Remark 5.15]{riehl:mon-ams}.

\begin{thm}
  If $\sM$ is an excellent model category, then $\fibm$ has a natural numbers type over $1$.
\end{thm}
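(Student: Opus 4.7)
The plan is to realise $\N$ as the initial algebra for a monad $\dT$ on $\sM$ whose algebras are exactly fibrant objects equipped with $F_\N$-algebra structure, then recover the induction principle from initiality. Concretely, I would form $\dT$ as the coproduct of $\dR_1$ and $\dT_\N$ in the category of monads on $\sM$. Since $\sM$ is locally presentable (by excellence) and both $\dR_1$ and $\dT_\N$ are accessible, this coproduct exists via Kelly's transfinite small-object argument for monads and is again accessible. By the universal property of a coproduct of monads, a $\dT$-algebra structure on an object $X$ is precisely a pair consisting of an $\dR_1$-algebra structure on $X$ (a witness of fibrancy) together with an $F_\N$-algebra structure (a choice of $\fzero : 1 \to X$ and $\fsucc : X \to X$), with no further compatibility imposed.

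By accessibility of $\dT$ over the locally presentable $\sM$, the forgetful functor from $\dT$-algebras to $\sM$ admits a left adjoint, and in particular an initial $\dT$-algebra $\N$ exists. By construction $\N$ is fibrant and carries distinguished $\fzero$ and $\fsucc$ maps.

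To verify the elimination rule, suppose $\pi : C \fib \N$ is a fibration equipped with $z : 1 \to C$ over $\fzero$ and $s : C \to C$ over $\fsucc$. Using the fact recorded just before the theorem that composites of $\dR$-algebras naturally inherit an $\dR$-algebra structure, the composite $C \fib \N \fib 1$ endows $C$ with a $\dR_1$-algebra structure, while $(z,s)$ gives an $F_\N$-algebra structure; together these make $C$ into a $\dT$-algebra, and the construction is arranged precisely so that $\pi$ is a morphism of $\dT$-algebras. Initiality of $\N$ produces a unique $\dT$-algebra map $f : \N \to C$, and then $\pi \circ f$ and $1_\N$ are two $\dT$-algebra endomorphisms of $\N$, hence equal by initiality. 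So $f$ is a section, and its being a morphism of $F_\N$-algebras yields exactly $f \circ \fzero = z$ and $f \circ \fsucc = s \circ f$.

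The principal obstacle is the monadic infrastructure: constructing $\dT$ as an accessible monad and identifying its algebras with fibrant $F_\N$-algebras without spurious compatibility conditions. Once those foundational facts — standard applications of the theory of algebraic colimits of accessible monads on locally presentable categories — are in place, the elimination principle is essentially formal. The conceptual move is to bundle fibrancy and operational structure into a single monad, so that ordinary initiality does the work that would otherwise require the delicate mixture of lifting arguments and universal properties foreshadowed in the discussion preceding the theorem.
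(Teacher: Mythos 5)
Your proposal is correct and follows essentially the same route as the paper: form the algebraic coproduct of monads $\dT_\N+\dR_1$, take its initial algebra as $\N$, equip the eliminand $C$ with a compatible $(\dT_\N+\dR_1)$-algebra structure by choosing an $\dR$-algebra structure on the fibration $C\fib\N$ and composing with $\N\fib 1$, and extract the section from initiality. The only detail worth making explicit is that one must first \emph{choose} an $\dR$-algebra structure on the fibration $C\fib\N$ (a fibration merely admits such a structure, it does not come with one) before the composite-of-$\dR$-algebras fact can be applied; otherwise the argument matches the paper's.
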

\begin{proof}
  Consider the algebraic coproduct of monads $\dT_\N+\dR_1$.
  By definition, this is an (accessible) monad such that a $(\dT_\N+\dR_1)$-algebra structure on an object $X$ is precisely a pair of a $\dT_\N$-algebra structure (i.e.\ an $F_\N$-endofunctor-algebra structure) and an (unrelated) $\dR_1$-algebra structure.
  Let $\N$ be the initial $(\dT_\N+\dR_1)$-algebra, i.e.\ the free $(\dT_\N+\dR_1)$-algebra on the initial object, $\N =(\dT_\N+\dR_1)(\emptyset)$.
  Since $\N$ is a $\dT_\N$-algebra, it comes with $\zero$ and $\succ$; and since it is an $\dR_1$-algebra, it is fibrant.

  Now suppose $p:\N.C\fib \N$ is a fibration of $F_\N$-endofunctor-algebras.
  Choose an $\dR$-algebra structure on $p$.
  Then the composite $\N.C\fib\N\fib 1$ acquires an $\dR$-algebra structure, which is to say that $\N.C$ acquires an $\dR_1$-algebra structure, such that $p$ is (not just an $\dR$-algebra but) an $\dR_1$-algebra map.
  Since $\N.C$ is also a $\dT_\N$-algebra and $p$ is also a $\dT_\N$-algebra map, it follows that $\N.C$ is a $(\dT_\N+\dR_1)$-algebra and $p$ is a $(\dT_\N+\dR_1)$-algebra map.
  But since $\N$ is the initial $(\dT_\N+\dR_1)$-algebra, any $(\dT_\N+\dR_1)$-algebra map with codomain $\N$ has a $(\dT_\N+\dR_1)$-algebra section, and hence in particular a $\dT_\N$-algebra section, as desired.
\end{proof}

It is not clear to us whether these natural numbers types are weakly stable on $\fibm$ in general; but we can obtain weak stability with a small modification.
Let $\Mf$ denote the subcategory of fibrant objects in $\sM$, and $\fibmf$ the restriction of the comprehension category $\fibm$ to $\Mf$.
Note that $\Mf$ is the smallest subcategory of $\sM$ to which $\cF$ can be restricted and still be a comprehension category, since every fibrant object is the comprehension of some fibration over $1$.
In particular, any semantics of type theory in $\fibm$ must land inside $\Mf$, so not much is lost by this restriction.

\begin{lem}
  For any good model category $\sM$, the comprehension category $\fibmf$ also satisfies~\eqref{eq:lf}.
\end{lem}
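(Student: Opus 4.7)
The condition \eqref{eq:lf} requires that $\Mf$ have finite products, and that dependent exponentials of display maps and of product projections along display maps exist in $\Mf$. Since $\Mf$ is a full subcategory of $\sM$, it suffices to show that the relevant constructions, when carried out in $\sM$, remain in $\Mf$.

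First, any terminal object of $\sM$ is fibrant (as the unique map to itself is an isomorphism, hence a fibration), and if $X,Y\in\Mf$ then $X\times Y\in\Mf$: both projections $X\times Y\to Y$ and $Y\to 1$ are fibrations (the former by pullback of $X\fib 1$, which uses that fibrations are stable under pullback), so the composite $X\times Y\fib 1$ is a fibration. Hence $\Mf$ has finite products, and they agree with those in $\sM$.

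Now let $p\maps B\to A$ be a display map in $\fibmf$. By definition $p$ is a finite composite of dependent projections, each of which is the comprehension of a fibration; hence $p$ is itself a fibration, and since its codomain $A$ lies in $\Mf$, so does its domain $B$. Given a second display map $q\maps C\to B$, the same reasoning shows that $q$ is a fibration and $C\in\Mf$. By \cref{thm:gmc-tt} (or rather the remark preceding it that in any good model category, dependent products of fibrations along fibrations are fibrations), the dependent exponential $\Pi_p(q)\maps \Pi_p(C)\to A$ exists and is a fibration in $\sM$; since its codomain $A$ is fibrant, its domain $\Pi_p(C)$ is also fibrant, so the entire construction lives in $\Mf$. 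Its universal property in $\Mf$ follows from its universal property in $\sM$ because $\Mf\hookrightarrow\sM$ is full.

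Finally, the case of a product projection $B\times X\to B$ along a display map $p\maps B\to A$, with $X\in\Mf$, reduces to the previous case: $B\times X$ is fibrant by the first paragraph, and the projection $B\times X\to B$ is a fibration as a pullback of $X\fib 1$, hence is itself a display map in $\fibmf$. The main potential obstacle would have been that fibrant replacement could be needed to keep $\Pi_p(C)$ inside $\Mf$, but the good model category axioms (in particular that fibrations are closed under dependent products along fibrations) make this automatic once $A$ is fibrant.
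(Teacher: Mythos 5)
Your proof is correct and follows essentially the same route as the paper's (much terser) argument: products of fibrant objects are fibrant, and dependent exponentials of fibrations along fibrations are fibrations in a good model category, so all the \eqref{eq:lf} data stays inside $\Mf$. The extra details you supply (fibrancy of the terminal object, reduction of product projections to the display-map case, fullness of $\Mf\hookrightarrow\sM$ for the universal property) are all accurate elaborations of what the paper leaves implicit.
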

\begin{proof}
  The product of fibrant objects is of course fibrant, and the assumptions on a good model category ensure that the dependent exponential of a fibration along a fibration is again a fibration, hence its domain is fibrant if its codomain is.
\end{proof}

Moreover, any weakly stable structure possesed by $\fibm$ is also possesed by $\fibmf$.
Thus, all our preceding theorems about strictly stable structure in $\fibmbang$ also apply to $\fibmfbang$.

\begin{thm}\label{thm:nat-stable}
  If $\sM$ is an excellent model category, then $\fibmf$ has weakly stable natural numbers types.
\end{thm}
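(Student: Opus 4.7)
The plan is to take $\N_\Gamma := \Gamma \times \N$ for each fibrant $\Gamma$, where $\N = \N_1$ is the natural numbers type over $1$ produced by the previous theorem, with $\zero_\Gamma$ and $\succ_\Gamma$ obtained by pullback from $\zero,\succ$ on $\N$. Weak stability is then automatic: for any $\sigma : \Delta \to \Gamma$ between fibrant objects, $\sigma^*(\Gamma \times \N) = \Delta \times \N = \N_\Delta$, so the family $\{\N_\Gamma\}$ is tautologically closed under substitution. The only real content is therefore to verify that each $\N_\Gamma$ satisfies the elimination property over its base $\Gamma$.

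For this the fibrancy of $\Gamma$ is decisive: it guarantees that the projection $\pi : \Gamma \times \N \to \N$ is a fibration, being the pullback of the fibration $\Gamma \to 1$ along $\N \to 1$. In a good model category, pullback along a fibration preserves cofibrations and, by right properness, acyclic cofibrations, so $\pi^* \dashv \pi_*$ is a Quillen adjunction and in particular $\pi_*$ preserves fibrations. Given a fibration $C \fib \Gamma \times \N$ equipped with $z_\Gamma : \Gamma \to C$ over $\zero_\Gamma$ and $s_\Gamma : C \to C$ over $\succ_\Gamma$, applying $\pi_*$ therefore yields a fibration $\pi_* C \fib \N$. The pullback squares
\[
\begin{tikzcd}
\Gamma \ar[r,"\zero_\Gamma"] \ar[d] & \Gamma \times \N \ar[d,"\pi",fib] \\
1 \ar[r,"\zero"'] & \N
\end{tikzcd}
\qquad
\begin{tikzcd}
\Gamma \times \N \ar[r,"\succ_\Gamma"] \ar[d,"\pi"',fib] & \Gamma \times \N \ar[d,"\pi",fib] \\
\N \ar[r,"\succ"'] & \N
\end{tikzcd}
\]
trigger Beck--Chevalley isomorphisms that transport $(z_\Gamma,s_\Gamma)$ into an $F_\N$-algebra structure on $\pi_* C$ over $1$, consisting of $\tilde z : 1 \to \pi_* C$ over $\zero$ and $\tilde s : \pi_* C \to \pi_* C$ over $\succ$.

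Applying the elimination of $\N$ over $1$ from the preceding theorem then produces a section $\tilde f : \N \to \pi_* C$ over $\N$ with $\tilde f \circ \zero = \tilde z$ and $\tilde f \circ \succ = \tilde s \circ \tilde f$. Transposing $\tilde f$ across $\pi^* \dashv \pi_*$ yields a section $f : \Gamma \times \N \to C$ over $\Gamma \times \N$, and naturality of the adjunction converts the two equations back into the required $f \circ \zero_\Gamma = z_\Gamma$ and $f \circ \succ_\Gamma = s_\Gamma \circ f$. The main point --- and the precise reason the theorem is stated for $\fibmf$ rather than $\fibm$ --- is exactly the first step: without fibrancy of $\Gamma$ the map $\pi$ need not be a fibration, $\pi_* C$ need not be fibrant over $\N$, and the elimination property of the base $\N$ could not be invoked. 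Once this piece is in place the rest is a routine consequence of Beck--Chevalley and naturality of the dependent-product adjunction.
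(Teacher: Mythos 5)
Your proof is correct and follows essentially the same route as the paper: reduce to showing that $\Gamma^*\N = \Gamma\times\N$ has the elimination property over each fibrant $\Gamma$, use fibrancy of $\Gamma$ to make the relevant dependent product preserve fibrations, transpose the eliminator data down to $\N$ over $1$, eliminate there, and transpose back. The only (cosmetic) difference is that you use $\pi_*$ along $\pi:\Gamma\times\N\to\N$ together with Beck--Chevalley, whereas the paper applies $\Gamma_*$ and then pulls back along the unit $\eta_\N:\N\to\Gamma_*\Gamma^*\N$; these produce canonically isomorphic objects.
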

\begin{proof}
  It suffices to show that any natural numbers type $\N$ over $1$ is weakly stable, which is to say that for any (fibrant!)\ object $\Gamma$ the pullback $\Gamma^* \N$ is a natural numbers type over $\Gamma$.
  The proof is essentially the usual category-theoretic argument that pullback functors preserve natural numbers objects.
  Suppose $C\fib \Gamma^* \N$ is a fibration over $\Gamma$ equipped with a section $z:\Gamma\to C$ over $\Gamma^*(\zero)$ and a map $s:C\to C$ over $\Gamma^*(\succ)$.
  Then (since $\Gamma$ is fibrant) $\Gamma_* C \fib \Gamma_* \Gamma^* \N$ is a fibration, equipped with a point $\Gamma_*(z) : \Gamma_*(\Gamma^*(1)) = \Gamma_*(\Gamma) \to \Gamma_*(C)$ over $\Gamma_*(\Gamma^*(\zero))$ and a map $\Gamma_*(s):\Gamma_*(C)\to\Gamma_*(C)$ over $\Gamma_*(\Gamma^*(\succ))$.

  Now pull $\Gamma_* C$ back to $\N$ along the unit $\eta_\N$ of the adjunction $\Gamma^* \adj \Gamma_*$.
  Since $\eta$ is natural we have $\Gamma_*(\Gamma^*(\zero)) \circ \eta_1 = \eta_\N \circ \zero$ and $\Gamma_*(\Gamma^*(\succ)) \circ \eta_\N = \eta_\N \circ \succ$, so we have an induced map $z' : 1 \to \eta_\N^*\Gamma_*C$ over $\zero$ and $s':\eta_\N^*\Gamma_*C \to \eta_\N^*\Gamma_*C$ over $\succ$.
  Thus, using $\nrec$ for $\N$ we have a section $f:\N\to \eta_\N^*\Gamma_* C$ such that $f \circ \zero = z'$ and $f\circ \succ = s' \circ f$, or equivalently a map $g:\N \to \Gamma_*C$ over $\eta_\N$ such that $g\circ \zero = \Gamma_*(z) \circ \eta_1$ and $g\circ \succ = \Gamma_*(s) \circ g$.
  Finally, transposing $g$ across the adjunction $\Gamma^* \adj \Gamma_*$, we obtain $\Gamma^* \N \to C$ over $1_\N$ (the adjunct of $\eta_\N$) such that $h\circ \Gamma^*(\zero) = z$ and $h\circ \Gamma^*(\succ) = s\circ h$, as desired.
\end{proof}

\begin{cor}
  If $\sM$ is an excellent model category, then $\fibmfbang$ has strictly stable natural numbers types.\qed
\end{cor}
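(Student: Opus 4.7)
The plan is essentially to package together three results already established in this section, so the proof will be very short. First I would invoke the preceding lemma establishing that $\fibmf$ satisfies condition~\eqref{eq:lf}; this is a hypothesis needed for the local universes splitting to produce strictly stable structure. Next I would cite \cref{thm:nat-stable}, which provides weakly stable natural numbers types in $\fibmf$ whenever $\sM$ is excellent.

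With those two ingredients in hand, the remaining step is to apply the local universes coherence theorem for natural numbers types, proved just after the definition of (weak/strict) stable natural numbers types, which converts weakly stable natural numbers types on any comprehension category satisfying \eqref{eq:lf} into strictly stable natural numbers types on its left adjoint splitting. Applied to $\fibmf$, this yields strictly stable natural numbers types on $\fibmfbang$, which is exactly the claim.

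There is no genuine obstacle: the result is a direct corollary, and the substantive work has already been done in establishing weak stability (\cref{thm:nat-stable}) and in the general local universes machinery of~\cite{lw:localuniv}. The only thing to be slightly careful about is that the coherence theorem is stated for a general comprehension category satisfying \eqref{eq:lf}, so one should note explicitly that the preceding lemma checks this hypothesis for $\fibmf$ (not just for $\fibm$, which also satisfies it, but the restriction to fibrant objects is what was needed to get weak stability in the first place).
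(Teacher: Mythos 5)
Your proposal is correct and matches the paper's intent exactly: the corollary is stated with no written proof precisely because it follows immediately by combining the lemma that $\fibmf$ satisfies~\eqref{eq:lf}, the weak stability result of \cref{thm:nat-stable}, and the local universes coherence theorem for natural numbers types. Your remark about checking~\eqref{eq:lf} for $\fibmf$ specifically (rather than $\fibm$) is exactly the right point of care.
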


\section{\W-types}
\label{sec:w-types}

The construction of \W-types is similar to that of natural numbers, with one additional wrinkle.
Namely, the presence of parameters means that in weak stability we have to consider pullback along arbitrary morphisms $\sigma:\Delta\to\Gamma$ between fibrant objects, and in the case when $\Gamma$ is not terminal, fibrancy of $\Delta$ does not imply that $\sigma$ is a fibration.

Since a good model category is locally cartesian closed, the weakly stable $\Pi$-types in $\fibm$ asserted by \cref{thm:gmc-tt} are actually \emph{pseudo-stable}~\cite[Definition 3.4.2.8]{lw:localuniv}; thus we can use the definition of \textbf{weakly stable \W-types} relative to these from~\cite[Definition 3.4.4.9]{lw:localuniv}.

\begin{thm}
  If $\sM$ is an excellent model category, then $\fibm$ has \W-types.
  That is, for any $\Gamma\in\sM$, $A\in\F(\Gamma)$, and $B\in\F(\Gamma.A)$, there exists a \W-type as in~\cite[Definition 3.4.4.7]{lw:localuniv}.
\end{thm}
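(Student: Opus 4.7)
The plan is to mimic the natural-numbers construction of \cref{thm:nat-stable} in the slice category $\sM/\Gamma$, replacing the endofunctor $F_\N(X) = X + 1$ by the polynomial endofunctor associated to the composable fibrations $\Gamma.A.B \fib \Gamma.A \fib \Gamma$. Concretely, let $P_{A,B} = \Sigma_A \Pi_B B^*$ be the evident endofunctor on $\sM/\Gamma$; then a $P_{A,B}$-endofunctor-algebra on $X \fib \Gamma$ is exactly a $\sup$-map, and a W-type is morally the ``initial fibrant $P_{A,B}$-algebra''. Since $\sM$ is excellent, the slice $\sM/\Gamma$ inherits combinatoriality and remains a simplicial, locally cartesian closed model category, and $P_{A,B}$ is accessible (being a composite of adjoints), so the algebraically-free monad $\dT_W$ on $P_{A,B}$ exists by~\cite{garner:soa}.

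Next I would produce an algebraic fibrant replacement monad $\dR_\Gamma$ on $\sM/\Gamma$, obtained by restricting Garner's fibrant factorization monad $\dR$ on $\sM^\to$ to arrows with codomain $\Gamma$; an object of $\sM/\Gamma$ admits a $\dR_\Gamma$-algebra structure iff its structural arrow to $\Gamma$ is a fibration. The algebraic coproduct $\dT_W + \dR_\Gamma$ of these two accessible monads then exists on $\sM/\Gamma$, and I set $W := (\dT_W + \dR_\Gamma)(\emptyset)$, the free algebra on the initial object. By construction this is a fibration $W \fib \Gamma$ equipped with a $\sup$-map. Verification of the W-elimination rule then proceeds exactly as for $\N$: given a fibration $p: C \fib W$ of $P_{A,B}$-algebras, choose an $\dR$-algebra structure on $p$ viewed as an arrow in $\sM$; composing with the $\dR_\Gamma$-structure on $W \fib \Gamma$ promotes $C \fib \Gamma$ to an $\dR_\Gamma$-algebra and $p$ to an $\dR_\Gamma$-algebra map, hence $p$ is a $(\dT_W + \dR_\Gamma)$-algebra map into the initial algebra, so by initiality admits an algebra section, which is automatically a $P_{A,B}$-algebra section, as required.

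The main point to check—not really an obstacle so much as bookkeeping—is that Garner's algebraic small object argument on $\sM^\to$ restricts to give an accessible monad $\dR_\Gamma$ on $\sM/\Gamma$ whose algebras are the fibrations over $\Gamma$. This should work because the slice of a combinatorial model category is combinatorial, a map over $\Gamma$ is a fibration in $\sM/\Gamma$ iff it is a fibration in $\sM$, and the small object argument is natural in slicing. I would emphasize that the theorem as stated asserts only the \emph{existence} of a W-type for each $\Gamma$; weak stability under arbitrary $\sigma: \Delta \to \Gamma$—the genuinely new issue for parametrized types, flagged in the introduction as the ``one additional wrinkle''—additionally requires that $\Delta$ be fibrant, so that $\sigma$ factors through an acyclic cofibration with a retraction, together with an extension lemma that reduces an elimination problem over the domain to one over the codomain. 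That would be the subject of a subsequent theorem paralleling \cref{thm:nat-stable}, and would be the genuinely harder step in the section.
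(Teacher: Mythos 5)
Your proposal is correct and follows essentially the same route as the paper: the same polynomial endofunctor $\Sigma_A\Pi_B B^*$ (the paper's $F_{\W A B}$), the same algebraically-free monad, the same algebraic coproduct with the restricted fibrant replacement monad $\dR_\Gamma$, the same initial algebra, and the same elimination argument via choosing an $\dR$-algebra structure on $p$ and composing. The only detail you elide is that the eliminator input as given syntactically in~\cite[Definition 3.4.4.7]{lw:localuniv} must first be identified with an $F_{\W A B}$-endofunctor-algebra morphism $p:\Gamma.\W.C\fib\Gamma.\W$ --- the paper flags this as ``not completely obvious'' and handles it via the isomorphism $\Gamma.A.\Pi[B,\W].\Pi[B,C[\mathsf{app}'_{B,W}]] \cong \Gamma.A.\Pi[B,\Sigma[\W,C]]$ and the construction of $\Sigma$-types as composites of fibrations.
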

\begin{proof}
  Let $F_{\W A B}$ be the endofunctor of $\sM/\Gamma$ sending $X \to \Gamma$ to the local exponential $(A^*X)^B_A$, equipped with the composite projection $(A^*X)^B_A \to A\to \Gamma$.
  Since this is a composite of left or right adjoints between accessible categories, it is an accessible endofunctor, and $\sM/\Gamma$ is locally presentable, so $F_{\W A B}$ generates an algebraically-free monad $\dT_{\W A B}$.
  Let $\Gamma.\W$ be the initial algebra of $\dT_{\W A B} + \dR_{\Gamma}$, where $\dR_\Gamma$ is the restriction of $\dR$ to morphisms with target $\Gamma$.
  Then $\Gamma.\W\to \Gamma$ is a fibration, since it is an $\dR$-algebra, and its $F_{\W A B}$-endofunctor-algebra structure is exactly the map $\fold$ required of a \W-type.

  The input to the eliminator consists, as usual, of a fibration $p:\Gamma.\W.C\fib \Gamma.W$ that is a $F_{\W A B}$-algebra morphism.
  This is not completely obvious, with the eliminator defined following syntax as in~\cite[Definition 3.4.4.7]{lw:localuniv}; the point is that $\Gamma.A.\Pi[B,\W].\Pi[B,C[\mathsf{app}'_{B,W}]] \cong \Gamma.A.\Pi[B,\Sigma[\W,C]]$ by the mapping-in universal property of $\Sigma$-types; and by construction of $\Sigma$-types in $\sM$, the comprehension $\Gamma.\Sigma[\W,C]\fib\Gamma$ is just the composite $\Gamma.\W.C \fib \Gamma.W\fib \Gamma$.\footnote{Note a typo in~\cite[Definition 3.4.4.7]{lw:localuniv}: each $\Gamma.\W.\cdots$ in the left-hand columns of the diagrams should be $\Gamma.A.\cdots$.})
  As before, we choose an $\dR$-algebra structure on $p$, making this composite $\Gamma.\W.C \fib \Gamma.W\fib \Gamma$ an $\dR$-algebra and the square
  \[
  \begin{tikzcd}
    \Gamma.\W.C \ar[d,fib] \ar[r,fib] & \Gamma.\W \ar[d,fib] \\ \Gamma \ar[r,idmap]& \Gamma
  \end{tikzcd}
  \]
  an $\dR$-algebra map, hence also a $(\dT_{\W A B} + \dR_{\Gamma})$-algebra map.
  Thus, since $\W$ is the initial $(\dT_{\W A B} + \dR_{\Gamma})$-algebra, this morphism has a $(\dT_{\W A B} + \dR_{\Gamma})$-algebra section, and in particular a $F_{\W A B}$-endofunctor-algebra section, which is $\wrec$.
\end{proof}

\begin{rmk}
  In~\cite{vdbm:wtypes-hott} it is shown that 1-categorical \W-types in simplicial sets and certain other well-behaved model categories automatically preserve fibrancy.
  But in the general case we need to include a fibrant replacement, as before.
\end{rmk}

\begin{thm}\label{thm:wk-w}
  If $\sM$ is an excellent model category, then $\fibmf$ has weakly stable \W-types.
\end{thm}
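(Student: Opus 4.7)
The plan is to mirror the argument of \cref{thm:nat-stable}, inserting a reduction step to deal with parameters. Given a morphism $\sigma:\Delta\to\Gamma$ between fibrant objects, factor $\sigma = p\circ i$ with $i:\Delta\acof\Delta'$ an acyclic cofibration and $p:\Delta'\fib\Gamma$ a fibration. The intermediate object $\Delta'$ is fibrant since $p$ has fibrant codomain, so it suffices to verify weak stability separately for fibrations and for acyclic cofibrations between fibrant objects.

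The fibration case is essentially the natural-numbers argument adapted to \W-types. Given a fibration $C\fib p^*(\Gamma.\W)$ over $\Delta'$ equipped with the appropriate algebra structure, push forward along the right adjoint $p_*$ (which preserves fibrations, since $p$ is a fibration) to obtain a fibration over $\Gamma$. Pulling back along the unit $\eta:\Gamma.\W\to p_*p^*(\Gamma.\W)$, Beck--Chevalley transfer (available because $p^*$ preserves local exponentials in the locally cartesian closed $\sM$) promotes this to a fibration of $F_{\W A B}$-algebras over $\Gamma.\W$. Applying the \W-type universal property of $\Gamma.\W$ and transposing the resulting section back across $p^*\dashv p_*$ yields the required section of $C$ over $\Delta'$, with the $\wrec$ equations holding strictly.

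For an acyclic cofibration $i:\Delta\acof\Delta'$ between fibrant objects, fibrancy of $\Delta$ yields a retraction $r:\Delta'\to\Delta$ with $r\circ i = 1_\Delta$, obtained by lifting $1_\Delta$ against $\Delta\fib 1$ along $i$. Writing $\Delta'.\W$ for the \W-type $p^*(\Gamma.\W)$ over $\Delta'$ produced by the previous case, and $\bar i: i^*(\Delta'.\W)\to\Delta'.\W$ for the pullback of $i$, an \emph{extension lemma} uses the retraction $r$ to produce, from any algebra $C\fib i^*(\Delta'.\W)$ over $\Delta$, an algebra $\tilde C\fib\Delta'.\W$ over $\Delta'$ with $\bar i^*\tilde C = C$ on the nose and compatible algebra data. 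Applying $\wrec$ at $\Delta'$ and restricting along $\bar i$ then gives the required section over $\Delta$.

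The principal obstacle is the extension lemma: one must arrange a \emph{strict} extension of $C$ (not merely one up to weak equivalence), compatible with the algebra structure. This is the single place where fibrancy of contexts is used essentially; without the retraction, the natural extension available from right properness would be only homotopical, which would not yield a well-defined $\wrec$ satisfying the judgmental computation rules.
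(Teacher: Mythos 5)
Your overall architecture---factor $\sigma$ as an acyclic cofibration followed by a fibration between fibrant objects and treat the two cases separately---is exactly the paper's, and your fibration case is essentially the paper's argument (push forward along the right adjoint, a chain of Beck--Chevalley transformations, pull back along the unit, apply $\wrec$, transpose back), adapted from \cref{thm:nat-stable} just as you say.

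The acyclic-cofibration case, however, has a genuine gap. First, a technical slip: the retraction you need is not $r:\Delta'\to\Delta$ of $i$ itself, but a retraction of the pulled-back map $\bar i:\Delta.\W\to\Delta'.\W$. The context-level $r$ does not induce any map $\Delta'.\W\to\Delta.\W$ retracting $\bar i$: since $\Delta.\W=\Delta\times_{\Delta'}\Delta'.\W$, the obvious candidate built from $r$ and the identity would have to satisfy $i\circ r\circ\pi=\pi$ for the projection $\pi:\Delta'.\W\to\Delta'$, which fails because $i\circ r\neq 1$. What does work (and is what the paper does) is to observe that $\bar i$ is itself an acyclic cofibration with fibrant domain, hence admits a retraction $\bar r$, and to set $\tilde C=\bar r^*C$. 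More seriously, you assert the extension lemma rather than prove it, and it is the entire content of this case: given the underlying fibration $\tilde C\fib\Delta'.\W$ with $\bar i^*\tilde C\cong C$, one must still produce a structure map $f_*B^*\tilde C\to\tilde C$ over $\fold$ restricting to the given one over $\Delta$. The paper's \cref{thm:W-cofmnd} does this by solving a lifting problem against the fibration $\tilde C\fib\Delta'.\W$, and solvability is not formal: the key step is a Beck--Chevalley computation identifying $(i^*f)_*(i^*B)^*C$ with the pullback of $f_*B^*\tilde C$ along the acyclic cofibration obtained by pulling $i$ back to $A$, whence the left leg of the lifting problem is an acyclic cofibration. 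Without that computation the lift need not exist, so the ``principal obstacle'' you correctly identify is left unresolved.
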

\begin{proof}
  Let $\W\fib\Gamma$ be a $\W$-type constructed as above for $\Gamma\in\sM$, $A\in\F(\Gamma)$, and $B\in\F(\Gamma.A)$; we must show that its pullback along any $\sigma:\Delta\to\Gamma$ is also a \W-type.
  We consider separately the cases when $\sigma$ is an acyclic cofibration or a fibration; by factorization this suffices for the general case.

  If $\sigma$ is a fibration, the argument is similar to that of \cref{thm:nat-stable}.
  Suppose given a fibration $C\fib \sigma^*\W$ over $\Delta$ with structure map $d$ over $\sigma^*(\fold)$:
  \[
  \begin{tikzcd}
    (\sigma^* A)_! (\sigma^* f)_* (\sigma^*B)^* C \ar[rr,"d"] \ar[d] && C \ar[d,fib] \\
    (\sigma^* A)_! (\sigma^* f)_* (\sigma^*B)^* \sigma^* \W \ar[r,"\cong"] & \sigma^*(f_* B^* \W) \ar[r,"{\sigma^*(\fold)}",swap] & \sigma^*\W
  \end{tikzcd}
  \]
  Then using several Beck-Chevalley transformations, we have a composite map
  \begin{multline*}
    A_! f_* B^* \sigma_* C
    \toiso A_! f_* (B^*\sigma)_* (\sigma^*B)^* C
    \toiso A_! (A^*\sigma)_* (\sigma^* f)_* (\sigma^*B)^* C\\
    \to \sigma_* (\sigma^* A)_! (\sigma^* f)_* (\sigma^*B)^* C
    \xto{\sigma_* d} \sigma_* C
  \end{multline*}
  that lies over the corresponding map for $\sigma^*\W$ constructed from $\sigma^*(\fold)$.
  Now if we pull this back along the unit $\eta_\W : \W \to \sigma_* \sigma^* \W$ we get a map $d'$ satisfying
  \[
  \begin{tikzcd}
    A_! f_* B^* \eta_\W^* \sigma_* C \ar[r,"d'"] \ar[d] & \eta_\W^* \sigma_* C \ar[d,fib] \\
    A_! f_* B^* \W \ar[r,"\fold"] & \W
  \end{tikzcd}
  \]
  and thus a section $\wrec:\W\to \eta_\W^* \sigma_* C$ commuting with $d'$ and $\fold$.
  Transposing this back across the adjunction $\sigma^* \adj \sigma_*$, we obtain a section of $C$ commuting with $d$ and $\sigma^*(\fold)$, as desired.

  On the other hand, if $\sigma$ is an acyclic cofibration, then so is its pullback $\W^*\sigma : \sigma^*\W \to \W$ along the fibration $\W\fib\Gamma$.
  Moreover, since $\Delta$ is fibrant, so is the object $\sigma^*\W$.
  Therefore, the acyclic cofibration $\W^*\sigma$ admits a retraction $r$.
  Since $(\W^*\sigma)^* r^* C \cong C$, it suffices to extend the given structure $d$ on the fibration $C\fib\sigma^*\W$ to a corresponding structure on $r^* C\fib\W$ whose pullback to $\sigma^*\W$ is $d$.
  This is contained in the following lemma.
\end{proof}

\begin{lem}\label{thm:W-cofmnd}
  Suppose a pair of pullback squares in a good model category $\sM$:
  \[
  \begin{tikzcd}
    C \ar[r,cof,acyc',"i"] \ar[d,fib,"p",swap] \ar[dr,phantom,near start,"\lrcorner"] & D \ar[d,fib,"q"] \\
    V \ar[r,cof,acyc',"j"] \ar[d,fib] \ar[dr,phantom,near start,"\lrcorner"] & W \ar[d,fib] \\
    \Delta \ar[r,cof,acyc',"\sigma"] & \Gamma
  \end{tikzcd}
  \]
  in which all objects are fibrant, the downward-pointing arrows are fibrations, and the rightward-pointing arrows are acyclic cofibrations.
  Suppose moreover that for some fibration $f:B\fib A$ between fibrant objects of $\sM/\Gamma$, the object $W$ of $\sM/\Gamma$ is a $F_{\W A B}$-endofunctor-algebra, the object $C$ is a $F_{\W (\sigma^*A) (\sigma^*B)}$-endofunctor-algebra, and $p$ is a $F_{\W (\sigma^*A) (\sigma^*B)}$-morphism (when $V$ has its induced structure from $W$).
  Then there is a $F_{\W A B}$-endofunctor-algebra structure on $D$ inducing the structure on $C$ by pullback and making $q$ a $F_{\W A B}$-morphism.
\end{lem}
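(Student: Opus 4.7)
The plan is to find $\alpha_D$ by solving a lifting problem in $\sM$. Write $X = F_{\W AB}(D)$ and set $\phi = \alpha_W \circ F_{\W AB}(q) : X \to W$, where $\alpha_W$ denotes the given algebra structure on $W$. Constructing an algebra structure $\alpha_D : X \to D$ for which $q$ is an algebra morphism is the same as finding a section of the pullback fibration $\mathrm{pr}_1 : X \times_W D \fib X$ obtained by pulling $q$ back along $\phi$; such a section necessarily has the form $(1_X, \alpha_D)$ with $q \alpha_D = \phi$.

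The key preliminary observation is that $X \fib \Gamma$ is itself a fibration. Indeed, $D \fib W \fib \Gamma$ is a composite of fibrations, the comprehension maps for $A$ and $B$ are fibrations, and the operation $A_! f_* B^*(-)$ applied to a fibration over $\Gamma$ produces a fibration over $\Gamma$ (by pullback-stability of fibrations, the type-theoretic fibration-category structure on fibrations between fibrant objects, and composition). Consequently, the canonical map $\iota : \sigma^* X \to X$, which is the pullback of the acyclic cofibration $\sigma : \Delta \to \Gamma$ along $X \fib \Gamma$, is an acyclic cofibration: a cofibration by stability of cofibrations under limits, and a weak equivalence by right properness.

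By Beck--Chevalley in the locally cartesian closed category $\sM$, we have $\sigma^* F_{\W AB}(D) \cong F_{\W \sigma^*A,\, \sigma^*B}(\sigma^* D) = F_{\W \sigma^*A,\, \sigma^*B}(C)$, which identifies the source of $\iota$ with the domain of $\alpha_C$. The pair $(\iota, i \alpha_C)$ then defines a map $s_C : \sigma^* X \to X \times_W D$ with first component $\iota$. Its landing in the fibre product amounts to the identity $\phi \circ \iota = q \circ i \alpha_C$, which follows by combining the pullback relation $q \circ i = j \circ p$ from the upper square, the algebra-morphism hypothesis $p \alpha_C = \alpha_V \circ F_{\W \sigma^*A,\,\sigma^*B}(p)$ with $\alpha_V = \sigma^* \alpha_W$ the induced structure on $V$, and naturality of the Beck--Chevalley isomorphism with respect to $q$.

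Applying the lifting property of the acyclic cofibration $\iota$ against the fibration $\mathrm{pr}_1$ yields a section $s = (1_X, \alpha_D) : X \to X \times_W D$ extending $s_C$. The second component $\alpha_D : F_{\W AB}(D) \to D$ satisfies $q \alpha_D = \phi$, making $q$ an algebra morphism; and the identity $s \circ \iota = s_C$ unwinds, in the second component, to $\alpha_D \iota = i \alpha_C$, which under the Beck--Chevalley identification is exactly the assertion that $\sigma^* \alpha_D$ recovers $\alpha_C$. The main obstacle is verifying that $X \fib \Gamma$ is a fibration, on which the central right-properness step rests; this is precisely why the approach works only after restricting to fibrant contexts, so that $A$ and $B$ (and hence the comprehensions used to build $F_{\W AB}(D)$) are themselves fibrations, not merely fibrant in some weaker over-category.
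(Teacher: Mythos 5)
Your proof is correct and follows essentially the same route as the paper: both reduce the construction of the algebra structure on $D$ to a single lifting problem of the acyclic cofibration $F_{\W (\sigma^*A)(\sigma^*B)}(C) \to F_{\W A B}(D)$ against a fibration, and both establish that this map is an acyclic cofibration by exhibiting it (via Beck--Chevalley and the fact that dependent products of fibrations along fibrations are fibrations) as a pullback of an acyclic cofibration along a fibration, using right properness. The only cosmetic differences are that you phrase the lift as a section of the pullback $X\times_W D \fib X$ rather than as a direct lifting square against $D\fib W$, and that you pull back $\sigma$ along the composite $F_{\W AB}(D)\fib\Gamma$ where the paper pulls back $A^*\sigma$ along $f_*B^*D\fib A$ --- the same map either way.
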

\begin{proof}
  By definition, $F_{\W A B}(X) = f_* B^* X$ (with its canonical map to $A$ forgotten), and similarly $F_{\W (\sigma^*A) (\sigma^*B)}(X) = (\sigma^*f)_* (\sigma^*B)^* X$.
  Thus we are given the following commutative diagram of solid arrows and we want to construct the dashed arrow making the other squares commute:
  \[
  \begin{tikzcd}
    (\sigma^*f)_* (\sigma^*B)^* C \ar[rr] \ar[dr] \ar[dd] && f_* B^* D \ar[dr,dashed] \ar[dd]\\
    & C \ar[rr,->,near end,"i",swap,crossing over] && D \ar[dd,fib,"q"] \\
    (\sigma^*f)_* (\sigma^*B)^* V \ar[rr] \ar[dr] && f_* B^* W \ar[dr] \\
    & V \ar[rr,->,"j",swap] \ar[from=uu,near end,"p",crossing over]  && W
  \end{tikzcd}
  \]
  This is equivalent to solving the following lifting problem:
  \[
  \begin{tikzcd}
    (\sigma^*f)_* (\sigma^*B)^* C \ar[r] \ar[d] & C \ar[r] & D \ar[d,fib] \\
    f_* B^* D \ar[urr,dashed] \ar[r] & f_* B^* W \ar[r] & W
  \end{tikzcd}
  \]
  so it suffices to show that $(\sigma^*f)_* (\sigma^*B)^* C \to f_* B^* D$ is an acyclic cofibration.
  Now since $C = \sigma^*D$, we have $(\sigma^*B)^*C = \sigma^* B^* D$, so the map $(\sigma^*B)^*C \to B^* D$ is a pullback of $\sigma$ along the composite fibration $B^*D \to D \to \Gamma$, hence an acyclic cofibration.
  But $(\sigma^*B)^*C$ is also the pullback of $B^* D$ along the map $B^*\sigma : \sigma^*B \to B$, so by the Beck-Chevalley condition for the pullback square
  \[
  \begin{tikzcd}
    \sigma^* B \ar[r,"{B^*\sigma}",acyc',cof] \ar[d,"{\sigma^* f}",swap,fib] & B \ar[d,"f",fib]\\
    \sigma^* A \ar[r,"{A^*\sigma}",acyc',cof] & A
  \end{tikzcd}
  \]
  we have $(\sigma^*f)_* (\sigma^*B)^* C \cong (A^*\sigma)^* f_* B^* D$.
  Thus, our desired map $(\sigma^*B)^*C \to B^* D$ is equivalently the pullback $(A^*\sigma)^* f_* B^* D \to f_* B^* D$ of the acyclic cofibration $A^*\sigma$ along the fibration $f_* B^* D \fib A$, and hence an acyclic cofibration.
\end{proof}

\begin{cor}
  If $\sM$ is an excellent model category, then $\fibmfbang$ has strictly stable \W-types.\qed
\end{cor}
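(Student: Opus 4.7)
The plan is to obtain this corollary by combining the weak stability result just proved (Theorem~\ref{thm:wk-w}) with the local universes coherence theorem for $\W$-types from~\cite[\S3.4.4]{lw:localuniv}, exactly parallel to how the natural numbers corollary at the end of \cref{sec:natural-numbers} was obtained from the corresponding weak stability theorem. So the whole proof should amount to a one-line invocation: check the hypotheses of the coherence theorem and apply it.

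First I would verify that the hypotheses of the local universes machinery apply to $\fibmf$. Condition~\eqref{eq:lf} was established for $\fibmf$ in the lemma preceding \cref{thm:nat-stable} (fibrant objects are closed under products, and dependent exponentials of fibrations along fibrations between fibrant objects remain fibrations with fibrant domain). Next, since $\sM$ is good, the pseudo-stable $\Pi$-types on $\fibm$ from \cref{thm:gmc-tt} restrict to pseudo-stable $\Pi$-types on $\fibmf$ (the restriction to fibrant base objects does not affect this because the construction lands in fibrant objects automatically). These pseudo-stable $\Pi$-types are precisely the input needed to make sense of the notion of weakly stable $\W$-type in~\cite[Definition~3.4.4.9]{lw:localuniv}.

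Then I would invoke~\cite[Theorem~3.4.4.10]{lw:localuniv}, which states that if a comprehension category $\C$ satisfies~\eqref{eq:lf} and has pseudo-stable $\Pi$-types together with weakly stable $\W$-types (relative to those $\Pi$-types), then $\C_!$ has strictly stable $\W$-types. The weakly stable $\W$-types have been constructed in \cref{thm:wk-w}, and the pseudo-stable $\Pi$-types are inherited from $\fibm$ as noted above, so applying the theorem to $\fibmf$ produces the claimed strictly stable $\W$-types on $\fibmfbang$.

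Since this is a direct citation of an existing coherence theorem, there is no real obstacle, and no new local universe needs to be exhibited by hand (the local universe for $\W$-types, expressed as an iterated $\Sigma$-type in the internal language, is given in~\cite{lw:localuniv} and takes the same algorithmic form as the ones written out explicitly for identity types, dependent identity types, and pushouts in \cref{sec:coherence-pushouts}). The only point deserving a brief remark in the final write-up is that all the weakly stable structure used --- the $\Pi$-types from \cref{thm:gmc-tt} and the $\W$-types from \cref{thm:wk-w} --- lives on $\fibmf$ rather than on $\fibm$, so that the coherence theorem is being applied to $\fibmf$ and delivers structure on $\fibmfbang$, not on $\fibmbang$; this matches the statement of the corollary.
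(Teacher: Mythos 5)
Your proposal is correct and is exactly the argument the paper intends: the corollary is stated with an immediate \qed because it follows by applying the coherence theorem of~\cite[\S3.4.4]{lw:localuniv} verbatim to $\fibmf$ (which satisfies~\eqref{eq:lf} by the lemma before \cref{thm:nat-stable} and inherits pseudo-stable $\Pi$-types), using the weakly stable $\W$-types from \cref{thm:wk-w}. Your remark that the coherence theorem is applied to $\fibmf$ rather than $\fibm$ is the right point to flag, and matches the paper's setup.
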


\section{A higher inductive type not constructible from pushouts}
\label{sec:blass}

As mentioned in \cref{sec:introduction}, it is known that (at least in the presence of a universe) many recursive higher inductive types can in fact be constructed using only pushouts and the natural numbers, including all $n$-truncations and some localizations.
Thus, it seems worthwhile to give explicitly an example of a higher inductive type that \emph{cannot} be so constructed.

Briefly, the idea is that higher inductive types can be used to construct free algebras for infinitary algebraic theories.
However, Blass showed (modulo a large cardinal assumption) that these cannot be constructed in ZF \cite{blass:freealg}.
Therefore, no higher inductive types that are modelled by ZF can suffice to construct these.

To give this in detail, we work internally, and write $x=y$ instead of $\id x y$.
Let $\mathsf{even},\mathsf{odd}:\N\to\N$ be the inclusions of the even and odd numbers respectively, and let $\mathsf{par}:\N\to\N+\N$ be the inverse of the bijection $[\mathsf{even},\mathsf{odd}]:\N+\N\to\N$.
Given functions $f,g:\N\to A$, we write $f\cup g:\N\to A$ for the composite $\N \xto{\mathsf{par}} \N+\N \xto{[f,g]}A$.
And given $a:A$, we write $\{a\}:\N\to A$ for the constant function at $a$.

Now let $\iF$ be the higher inductive type generated by the following constructors (some numbered for later reference).
Intuitively, $\iF$ can be thought of as a set of notations for countable ordinals, so that its existence implies that of an ordinal of uncountable cofinality.
\ifmpcps
  \begin{enumerate} \renewcommand{\theenumi}{(\arabic{enumi})}
\else
  \begin{enumerate}[label=(\arabic*)]
\fi
\item[$\bullet$] $0:\iF$.
\item[$\bullet$] $\iS:\iF\to\iF$.
\item[$\bullet$] $\sup : (\N\to\iF)\to\iF$.
\item[$\bullet$] $\iF$ is a 0-type.
  As in~\cite[\sec7.3]{hottbook} we can ensure this with two constructors:
  \begin{itemize}
  \item $h: (S^1\to \iF) \to \iF$.
  \item For each $r:S^1\to\iF$ and $x:S^1$, an identification $r(x) = h(r)$.
  \end{itemize}
\item For any $f,g:\N\to\N$ such that $\forall n. ((\exists m. f(m)=n) \leftrightarrow (\exists m. g(m)=n))$, and any $h:\N\to \iF$, we have $\sup(h\circ f) = \sup(h\circ g)$.\label{item:f1}
\item For any $f,g:\N\to\iF$ we have $\sup(f \cup \{\sup(f\cup g)\}) = \sup(f\cup g)$.\label{item:f2}
\item For any $f,g:\N\to\iF$ we have $\sup(f \cup \{\iS(\sup(f\cup g))\}) = \iS(\sup(f\cup g))$.\label{item:f3}
\item This constructor requires some setup. Fix a bijection $\mathsf{pair}:\N\times\N\to \N$; then suppose given $b,c:\N\to\N$ jointly surjective, and $L:\N\to\N\to\N$ such that $\forall n. \exists m,\ell. (L(b(m),\ell)=c(n))$ and $\forall n. \exists m,\ell. (L(c(m),\ell)=b(n))$.  For $h:\N\to\iF$, set $h'(n) = \sup(h\circ L(n))$.  Now given such $h$, take by induction $h_0 = h$, $h_{k+1} = h_k'$; then set $f_k = h_k \circ b$ and $g_k = h_k \circ c$, and finally $\fbar(\mathsf{pair}(k,n)) = f_k(n)$ and $\gbar(\mathsf{pair}(k,n)) = g_k(n)$. Note that for each $i$, $\fbar(i)$, $\gbar(i)$ are algebraic expressions in the variables $h$ under the operation $\sup$.

  Now the constructor says: for each such $b$, $c$, $L$, and $h$, $\sup(\fbar) = \sup(\gbar)$.\label{item:f4}
\item $\sup(\{0\}) = 0$.\label{item:f5}
\end{enumerate}

\begin{thm}[{cf.\ \cite[\sec9]{blass:freealg}}]
  Assuming excluded middle and the higher inductive type $\iF$, there exists an uncountable regular cardinal.
\end{thm}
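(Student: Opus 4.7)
The plan is to exhibit the preorder quotient $W \defeq \iF/{\sim}$, for a suitable preorder $\leq$ on $\iF$, as an uncountable regular cardinal. Since $\iF$ is a set, I first use its eliminator to define an inductively generated binary relation $\leq$ on $\iF$ with generators $0 \leq x$, $x \leq \iS(x)$, $f(n) \leq \sup(f)$ for every $n:\N$, and $\sup(f) \leq y$ whenever $f(n) \leq y$ for all $n$, together with reflexivity and transitivity. A direct case analysis shows that each of the path constructors \ref{item:f1}--\ref{item:f5} is respected by $\leq$, since both sides of each such equation are mutual upper bounds under these rules. Setting $x \sim y \defeq (x \leq y) \wedge (y \leq x)$, the quotient $W$ inherits a well-defined partial order $<$ together with operations $\zero_W$, $\iS_W$, $\sup_W$ satisfying the analogue of the defining data of $\iF$.

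Regularity of $W$ comes almost for free: given a countable family $h : \N \to W$, countable choice (a mild supplement to excluded middle) lifts $h$ to some $\tilde h : \N \to \iF$, and then the well-pointed-sup rule forces $[\sup \tilde h]$ to be an upper bound for $h$ in $W$. For uncountability I would argue by diagonalisation: were $q : \N \twoheadrightarrow W$ a surjection, lifting to $\tilde q : \N \to \iF$ and forming $s \defeq \iS(\sup \tilde q)$ would yield $q(m) = [\tilde q(m)] \leq [\sup \tilde q] < [s]$ strictly for every $m$, contradicting $[s] = q(n)$ for some $n$.

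The principal technical obstacle is establishing that $\iS$ is \emph{strictly} monotone on the quotient, i.e.\ $\iS(x) \not\leq x$ in $\iF$ for every $x$; this is precisely what the equations \ref{item:f3} and \ref{item:f4} are engineered to enforce against the trivial collapsed interpretation where $\iS$ is the identity. A direct induction on derivations of $\leq$ is delicate because the well-pointed-sup rule lets $\sup$ and $\iS$ interact in subtle ways. Following Blass~\cite[\sec9]{blass:freealg}, the cleanest route is to construct an auxiliary ``test algebra'' $A$ from Brouwer-style ordinal notations (which exist as a $\W$-type by \cref{sec:w-types}), quotiented by the analogues of \ref{item:f1}--\ref{item:f5} and equipped with the standard strict order in which $\iS$ is genuine successor; applying the $\iF$-recursor then yields a $\leq$-preserving map $\iF \to A$ that transports the strict inequality $\iS(x) > x$ back from $A$. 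Once this is in place, $W$ (equivalently its cardinality) is the promised uncountable regular cardinal.
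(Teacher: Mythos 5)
Your proposal takes a completely different route from the paper's, and the route has genuine gaps. The paper does not construct the cardinal out of $\iF$ at all: it observes that $\iF$ is by construction the free algebra (on no generators) for the infinitary variety of \cite[\sec9]{blass:freealg}, checks that the constructors of $\iF$ match Blass's operations and similarly-numbered axioms (the only non-routine points being~\ref{item:f1} and the modified, now genuinely algebraic, axiom~\ref{item:f4}, for which one must verify both that ordinals with genuine suprema satisfy it and that it still yields the consequence used in Blass's Subcase~3.1), and then invokes Blass's proof wholesale, which derives a contradiction from the existence of such a free algebra together with the assumption that every infinite aleph other than $\omega$ is singular. Against that, the most immediate problem with your argument is the appeal to countable choice: $\mathrm{AC}_{\omega}$ already implies that a countable union of countable sets is countable, hence that $\omega_1$ is regular, so under that ``mild supplement'' the conclusion holds with no use of $\iF$ whatsoever. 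The theorem is only of interest over excluded middle alone, where Gitik's model \cite{gitik:unc-sing} shows no uncountable regular cardinal need exist --- and in exactly that setting you cannot lift maps $\N\to W$ through the quotient.

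Even setting choice aside, two further steps fail. You never show that $W$ is linearly ordered or well-founded; an uncountable poset in which every countable family is strictly bounded does not by itself yield a regular cardinal. More seriously, the ``test algebra'' to which you defer the strict monotonicity of $\iS$ is circular: a set-sized algebra for~\ref{item:f1}--\ref{item:f5} in which $\iS$ is strictly inflationary on the subalgebra generated by $0$ essentially \emph{is} an ordinal of uncountable cofinality, i.e.\ the object whose existence is at stake. Producing it as a quotient of the Brouwer-style \W-type by ``the congruence generated by~\ref{item:f1}--\ref{item:f5}'' requires forming the least congruence compatible with the infinitary operation $\sup$, which is precisely the free-algebra construction Blass shows is unavailable in ZF; and even granting the quotient, proving $\iS(x)\not\le x$ there is the original problem over again. (This is also why the quotient description does not contradict the paper's corollary that $\iF$ is not constructible from pushouts and $\N$: the obstruction sits in the congruence, not in the quotienting.) Blass's own argument is not a model construction but a proof by contradiction analyzing the putative free algebra under the hypothesis that all uncountable alephs are singular; the efficient move here is to reduce to it, as the paper does.
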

\begin{proof}
  The constructors of \iF correspond to the operations and the similarly-numbered axioms of the variety of algebras considered in~\cite[\sec9]{blass:freealg}.
  For~\ref{item:f2},~\ref{item:f3} and~\ref{item:f5} this is fairly obvious.

  Axiom~\ref{item:f1} of~\cite[\sec9]{blass:freealg} is $\sup(x_0,x_1,\dots) =\sup(y_0,y_1,\dots)$, where $\{x_0,x_1,\dots\}$ and $\{y_0,y_1,\dots\}$ are equal subsets of some fixed countably infinite set of variables.
  The enumerations of such variables $x_0,x_1,\dots$ and $y_0,y_1,\dots$ therefore correspond to $f,g:\N\to\N$, and the hypothesis of our~\ref{item:f1} says that these have the same image.

  It is not entirely clear to us how axiom~\ref{item:f4} of~\cite[\sec9]{blass:freealg} is to be interpreted as an equation of an algebraic theory; its statement appears to be conditional on certain equalities, hence essentially-algebraic rather than algebraic.
  Our~\ref{item:f4} is a modification (which may have been essentially what was intended in~\cite[\sec9]{blass:freealg}) that is algebraic and satisfies the following two necessary properties.

  First, we need that the canonical supremum operation on ordinal numbers satisfies~\ref{item:f4} (with \iF interpreted as the ordinals).
  To see this, note that the assumption on $L$ ensures that for any $h$, for each $n$ there is an $m$ with $h(b(n))\le h'(c(m))$, and dually.
  Thus, for any $k,n$ there is an $m$ with $f_k(n) \le g_{k+1}(m)$, and dually.
  Thus $\fbar$ and $\gbar$ are mutually cofinal, hence have the same supremum.

  Second, in the proof of Subcase 3.1 in~\cite[\sec9]{blass:freealg}, we need~\ref{item:f4} to imply that if $h:\N\to\iF$ satisfies $h(n) = \sup(\setof{ h(m) | m\le n})$, and $b,c:\N\to\N$ are jointly surjective and each cofinal, then $\sup(h\circ b)=\sup(h\circ c)$.
  To see this, let $L(n):\N\to\N$ be an enumeration of $\setof{m|m\le n}$; then cofinality ensures the hypothesis of~\ref{item:f4}, while the other assumption ensures that $h'=h$.
  Using~\ref{item:f1}, therefore, we have $\sup(h\circ b) = \sup(\fbar) = \sup(\gbar) = \sup(h\circ c)$.

  Now the proof of~\cite[\sec9]{blass:freealg} applies to derive a contradiction from the assumption that $\omega$ is the only infinite regular cardinal.
\end{proof}

\begin{cor}
  If it is consistent with ZFC that there are arbitrarily large strongly compact cardinals, then ZF does not prove that the higher inductive type \iF exists in $\mathbf{Set}$.
  In particular, since $\mathbf{Set}$ models type theory with pushouts and natural numbers, under this assumption \iF cannot be constructed from pushouts and the natural numbers.
\end{cor}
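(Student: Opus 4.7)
The plan is to combine the preceding theorem with Blass's consistency result~\cite{blass:freealg}. First, I would note that $\mathbf{Set}$ equipped with its trivial model structure is an excellent model category, as observed in \cref{sec:model-categ-type}. Therefore, by the results of \crefrange{sec:pushouts}{sec:w-types}, ZF proves that (the split comprehension category associated to) $\mathbf{Set}$ interprets type theory with pushouts and the natural numbers; and since $\mathbf{Set}$ is classical, this interpretation validates excluded middle.

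Next, suppose for contradiction that ZF were to prove the existence of $\iF$ as a higher inductive type in $\mathbf{Set}$. Then by the preceding theorem, interpreted internally in $\mathbf{Set}$, ZF would prove the existence of an uncountable regular cardinal. However, the main consistency result of~\cite{blass:freealg} states that, assuming the consistency of ZFC with arbitrarily large strongly compact cardinals, it is consistent with ZF that $\omega$ is the only infinite regular cardinal. Under the stated large-cardinal hypothesis, these two facts are incompatible, so no such ZF proof can exist; this establishes the first assertion.

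The second assertion is then immediate: any purported construction of $\iF$ from pushouts and natural numbers is by definition a type-theoretic construction, and hence applicable in any model of type theory interpreting pushouts and natural numbers. Applied to the trivial model structure on $\mathbf{Set}$, such a construction would give a ZF proof that $\iF$ exists in $\mathbf{Set}$, contradicting the first part. I do not expect any genuine obstacle here --- the substantive work has been done in the preceding theorem (adapting Blass's algebra to the higher-inductive setting) and in~\cite{blass:freealg} (the set-theoretic independence result); the corollary is simply a matter of assembling these two inputs together with the observation that $\mathbf{Set}$ is an excellent model category.
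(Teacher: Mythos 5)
Your proposal is correct and follows essentially the same route as the paper: combine the preceding theorem with the consistency of ZF plus ``no uncountable regular cardinals'' (given arbitrarily large strongly compact cardinals), noting that $\mathbf{Set}$ with its trivial model structure is an excellent model category so the pushout and natural-numbers constructions apply. The only small point is attribution: the set-theoretic consistency result you invoke is Gitik's theorem (which Blass cites and uses), not a result proved in~\cite{blass:freealg} itself.
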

\begin{proof}
  As in~\cite[\sec9]{blass:freealg}, this follows from Gitik's~\cite{gitik:unc-sing} construction of a model of ZF with no uncountable regular cardinals, assuming the stated large cardinal principle.
\end{proof}

Thus, there is real extra power in recursive higher inductive types, above and beyond that obtainable by combining non-recursive higher inductive types and recursive ordinary inductive types.

\section{Propositional truncation}
\label{sec:prop-trunc}

Having dealt with both higher path-constructors in \crefrange{sec:pushouts}{sec:pushouts-type-theory} and recursive point-constructors in \crefrange{sec:natural-numbers}{sec:w-types}, we now combine them and consider a type with a recursive path-constructor.
One of the simplest such is the propositional truncation, whose type-theoretic rules are shown in \cref{fig:proptrunc}.
According to the usual pattern, there should be a second computation rule regarding $\treq$; but the typal form of this (which, as in \cref{sec:pushouts-type-theory}, is all we will obtain eventually) follows automatically since everything is a proposition, so we do not bother to posit it separately.
The semantic version of this definition is as follows.

\begin{figure}
  \centering
  \begin{mathpar}
    \inferrule{\Gamma\types A\type}{\Gamma\types \brck{A}\type}\and
    \inferrule{\Gamma\types A\type}{\Gamma,x:A\types \tr({x}) : \brck{A}}\and
    \inferrule{\Gamma\types A\type}{\Gamma,x:\brck A,y:\brck A\types \treq(x,y) : \id[\brck A]{x}{y}}\and
    \inferrule{\Gamma\types A\type \\ \Gamma,z:\brck A \types C\type \\ \Gamma,x:A \types c:C[\tr(x)/z] \\ \Gamma,x:\brck A,y:\brck A,u:C[x/z],v:C[y/z]\types d:\idover[z.C]{u}{v}{\treq(x,y)}}{\Gamma,w:\brck A \types \trrec(z.C,x.c,xyuv.d,a) : C[w/z] \\ \Gamma,a:A \types \trrec(z.C,x.c,xyuv.d,\tr(a)) \jdeq c[a/x]}
  \end{mathpar}
  \caption{Propositional truncation}
  \label{fig:proptrunc}
\end{figure}

\begin{defn}\label{defn:proptrunc}
  Let $\C$ be a comprehension category with stable classes of identity types and dependent identity types.
  A \textbf{stable class of propositional truncations} consists of, for any $A\in\T(\Gamma)$ a non-empty family of ``good propositional truncations'' $\brck A\in\T(\Gamma)$ with morphisms $\tr: \Gamma.A \to \Gamma.\brck A$ over $\Gamma$, and for any good identity type $\Id_{\brck A}$ a non-empty family of sections $\treq$ of the projection $\Gamma.\brck A.\brck A.\Id_{\brck A} \fib \Gamma.\brck A.\brck A$, plus for any $C\in\T(\Gamma.\brck A)$ equipped with a section $t:\Gamma.A\to\Gamma.\brck A.C$ over $\tr$, a good dependent identity type $\Id^{\brck A}_C$ over $\Id_{\brck A}$, and a section of the projection $\Gamma.\brck A.\brck A.\Id_{\brck A}.C.C.\Id^{\brck A}_C \to \Gamma.\brck A.\brck A.C.C$ over  $\treq$, a section $\trrec$ of $C$ such that $\trrec \circ \tr = t$; all equipped with appropriate reindexing operations.
  We say $\C$ has \textbf{strictly stable propositional truncations} if it has a stable class thereof in which each family of good structures is a singleton.
\end{defn}

The local universes coherence lemma is proven as usual; we combine it with the analogue of \cref{thm:transfer-depid}.

\begin{lem}
  If $\C$ satisfies~\eqref{eq:lf} and has a stable class of propositional truncations relative to some stable classes of identity types and dependent identity types, then $\C_!$ has strictly stable propositional truncations relative to the identity types obtained by strictifying the given ones and \emph{any} dependent identity types over these.
\end{lem}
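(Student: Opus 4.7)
The plan is to follow the two-step pattern of \cref{thm:lu-pushout,thm:transfer-depid}: first apply the local universes method to obtain strictly stable propositional truncations in $\C_!$ relative to the strictly stable dependent identity types obtained by strictifying the given weakly stable ones, and then transfer the resulting structure to any other choice of strictly stable dependent identity types lying over the strictified identity types.

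For the first step I introduce two local universes, in the style of \cref{thm:lu-pushout}. The local universe governing formation, introduction, and the path-constructor is simply $[a:V_A]$: in this universal case we fix a good propositional truncation $\brck{E_A}$ with its truncation $\tr$ and, relative to a chosen good identity type for $\brck{E_A}$, a chosen section $\treq$. The local universe governing elimination takes the form
\begin{align*}
[ & a:V_A,\\
& c: \prod x:\brck{E_A}. V_C,\\
& t: \prod x:E_A. E_C(c(\tr(x))),\\
& d: \prod x,y:\brck{E_A}, u:E_C(c(x)), v:E_C(c(y)). \Id^{\brck{E_A}}_{E_C}(c,\treq(x,y),u,v)],
\end{align*}
where the last component uses the strictified dependent identity type. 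In this universal case the hypothesised weak stability yields once and for all a section $\trrec$ satisfying $\trrec \circ \tr = t$, and strict stability under reindexing follows by the usual local-universes argument.

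For the second step I follow the proof of \cref{thm:transfer-depid}. Given any other strictly stable dependent identity types $\Idtwo{A}{C}$ over the strictified identity types, the eliminators of $\Id^A_C$ and of $\Idtwo{A}{C}$ produce maps between them in both directions, together with homotopies over their endpoints witnessing that the round trips are identities. For an elimination problem phrased with $\Idtwo{A}{C}$, I transfer its premise $d$ across these maps to an elimination problem phrased with $\Id^A_C$, apply the eliminator already constructed, and note that the resulting section still satisfies the strict equation $\trrec \circ \tr = t$ --- the only strict equation demanded by \cref{defn:proptrunc}, and one which does not involve $d$. The main obstacle is merely the bookkeeping needed to spell out the premise $d$ precisely, including the many projection variables implicit in the context of the dependent identity type; the transfer step is strictly simpler than in \cref{thm:transfer-depid} because propositional truncations require no strict computation rule for $\treq$ (any such equation holds automatically in any proposition) and no identity applications arise.
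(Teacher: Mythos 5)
Your proposal is correct and follows essentially the same route as the paper: the same local universe $V_A$ for formation/introduction (including $\treq$), the same iterated $\Sigma$-type local universe for the eliminator, and the same transfer of the premise $d$ along the comparison map $k$ from the proof of \cref{thm:transfer-depid}, with the observation that the only strict computation rule $\trrec\circ\tr=t$ is unaffected. The paper's proof is merely terser about the points you spell out (preservation of the strict equation and the absence of a computation rule for $\treq$).
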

\begin{proof}
  The local universe for the formation and introduction rules is just $V_A$, while that for the elimination and computation rules is
  \begin{align*}
    [& a:V_A,\\
    &c:\prod z:E_{\brck A}(a) . V_C \\
    &t:\prod x:E_{A}(a) . E_C(c(\tr(x))) \\
    &d:\prod x,y:E_{\brck A}(a), u:E_C(c(x)), v:E_C(c(y)) . E_{\Id^{\brck A}_C}(a,x,y,u,v) ]
  \end{align*}
  And if $C$ has eliminator data relative to some other dependent identity types, then by composing it with the map $k$ from \cref{thm:transfer-depid} we obtain eliminator data for the given strictified ones, allowing us to define the desired section.
\end{proof}

Passing to the model-categorical context, it is useful to isolate the following definition.

\begin{defn}\label{defn:prop}
  A morphism $X\to \Gamma$ in a good model category is a \textbf{proposition over $\Gamma$} if it is equipped with a section of the projection $X^\ivl_\Gamma \to X\times_\Gamma X$, i.e.\ a simplicial homotopy over $\Gamma$ between the two projections $X\times_\Gamma X \toto X$.
  A \textbf{map of propositions over $\Gamma$} is a map over $\Gamma$ that preserves these sections (or homotopies).
\end{defn}

Note that when interpreted with respect to the canonical identity types and dependent identity types in a good model category, the introduction rule for $\brck A$ says that it is a fibrant proposition with a map from $A$, and the elimination rule says that any fibration over $\brck A$ that is a map of propositions and has a section over $A$ has a section over $\brck A$.

\cref{defn:prop} does not require $X\to \Gamma$ to be a fibration, but when it is, the definition has the following reformulations.

\begin{lem}\label{thm:prop-diag}
  The following are equivalent for a fibration $p:X\fib\Gamma$ in a good model category.
  \begin{enumerate}
  \item $p$ admits some structure of a proposition over $\Gamma$.\label{item:pd1}
  \item The diagonal $X\to X\times_\Gamma X$ is a weak equivalence.\label{item:pd2}
  \item The diagonal $X\to X\times_\Gamma X$ is an acyclic cofibration.\label{item:pd3}
  \end{enumerate}
\end{lem}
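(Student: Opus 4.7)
The plan is to prove (i) $\Leftrightarrow$ (iii) and (ii) $\Leftrightarrow$ (iii). Three of the four constituent implications are formal; only one genuinely uses the simplicial enrichment of $\sM$. Specifically: (iii) $\Rightarrow$ (ii) is immediate since acyclic cofibrations are weak equivalences. For (ii) $\Rightarrow$ (iii), either projection $X \times_\Gamma X \to X$ splits the diagonal, so $\Delta$ is a (split) monomorphism, hence a cofibration by the axiom that all monomorphisms in a good model category are cofibrations; a cofibration that is a weak equivalence is acyclic. For (iii) $\Rightarrow$ (i), using the canonical factorization $X \xrightarrow{r} X^\ivl_\Gamma \xrightarrow{\pi} X \times_\Gamma X$ from Theorem~\ref{thm:stable-id} (with $r$ an acyclic cofibration, $\pi$ a fibration, and $\pi r = \Delta$), the square with top edge $r$, right edge $\pi$, bottom edge $\mathrm{id}$, and left edge $\Delta$ commutes, so lifting the acyclic cofibration $\Delta$ against the fibration $\pi$ produces a section $s : X \times_\Gamma X \to X^\ivl_\Gamma$ of $\pi$, i.e., a proposition structure on $p$.

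The main step is (i) $\Rightarrow$ (iii). Since $\Delta$ is already a cofibration by the argument above, it suffices to show it is a weak equivalence. The section $s$ transposes, across the copower--power adjunction in the simplicial model category $\sM/\Gamma$, to a simplicial homotopy $\tilde s : (X \times_\Gamma X) \otimes \ivl \to X$ over $\Gamma$ between the two projections $\pi_0, \pi_1 : X \times_\Gamma X \to X$ (since $e_i \circ s = \pi_i$). Pairing $\tilde s$ with the composite $\pi_0 \circ p_1 : (X \times_\Gamma X) \otimes \ivl \to X$, where $p_1$ projects off the $\ivl$ factor, through the pullback universal property of $X \times_\Gamma X$ produces a map $\tilde h : (X \times_\Gamma X) \otimes \ivl \to X \times_\Gamma X$ over $\Gamma$ whose restrictions at $t = 0$ and $t = 1$ are $\Delta \pi_0$ and $\mathrm{id}_{X \times_\Gamma X}$ respectively. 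Transposing back, the adjoint $h : X \times_\Gamma X \to (X \times_\Gamma X)^\ivl_\Gamma$ is a right homotopy, into a valid path object, from $\Delta \pi_0$ to $\mathrm{id}_{X \times_\Gamma X}$.

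To finish, since the target $X \times_\Gamma X$ is fibrant in $\sM/\Gamma$, a right homotopy into it forces $\Delta \pi_0 = \mathrm{id}$ in $\Ho(\sM/\Gamma)$; combined with the strict equality $\pi_0 \Delta = \mathrm{id}_X$, this makes $\Delta$ an isomorphism in $\Ho(\sM/\Gamma)$, hence a weak equivalence in $\sM/\Gamma$, and therefore in $\sM$. The hard part is precisely this last step: in a non-simplicial model category, a fibration between fibrant objects equipped with a mere section need not be a weak equivalence, so the simplicial enrichment (together with the stability of copowers under pullback in a good model category, which ensures that $(X \times_\Gamma X) \otimes \ivl$ behaves correctly over $\Gamma$) is genuinely used to convert the section $s$ of $\pi$ into the homotopy $\Delta \pi_0 \sim \mathrm{id}_{X \times_\Gamma X}$.
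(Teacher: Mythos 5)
Your proof is correct and follows essentially the same route as the paper's: (ii)$\Leftrightarrow$(iii) via the split-mono observation, (iii)$\Rightarrow$(i) by lifting $\Delta$ against the fibration $X^\ivl_\Gamma \fib X\times_\Gamma X$, and (i)$\Rightarrow$(iii) by exhibiting $\pi_0$ as a simplicial homotopy inverse to the diagonal. Your only deviations are cosmetic --- you spell out the pairing that produces the homotopy $\Delta\pi_0 \sim \mathrm{id}$ (which the paper leaves implicit) and you pass through saturation of $\Ho(\sM/\Gamma)$ rather than citing directly that simplicial homotopy equivalences are weak equivalences.
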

\begin{proof}
  The diagonal is always a split mono, hence a cofibration, so \ref{item:pd2}$\Leftrightarrow$\ref{item:pd3} is easy.
  And \ref{item:pd3}$\Rightarrow$\ref{item:pd1} by lifting in the square
  \[
  \begin{tikzcd}
    X \ar[d,cof,acyc] \ar[r,"\r"] & X^\ivl_\Gamma \ar[d,fib] \\
    X\times_\Gamma X \ar[r,idmap] & X\times_\Gamma X
  \end{tikzcd}
  \]
  in which the right-hand map is a fibration since $p$ is a fibration, as in \cref{thm:stable-id}.
  Finally, if~\ref{item:pd1} then any two parallel maps over $\Gamma$ with target $X$ are simplicially homotopic, by composing with the specified section.
  Thus, consider either projection $X\times_\Gamma X\to X$; this is a retraction of the diagonal, while the composite $X\times_\Gamma X\to X \to X\times_\Gamma X$ is simplicially homotopic to the identity by the above.
  Thus the diagonal is a simplicial homotopy equivalence.
\end{proof}

\begin{lem}\label{thm:prop-over}
  Given a proposition $X\fib \Gamma$ over $\Gamma$ in a good model category, the following are equivalent for a further fibration $Y\fib X$.
  \begin{enumerate}
  \item $Y\fib X$ admits some structure of a proposition over $X$.
  \item the composite $Y\fib X\fib \Gamma$ admits some structure of a proposition over $\Gamma$.
  \item the composite $Y\fib X\fib \Gamma$ admits some structure of a proposition over $\Gamma$ such that $Y\to X$ is a map of propositions.
  \end{enumerate}
\end{lem}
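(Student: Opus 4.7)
The plan is to reduce everything to the diagonal characterization of Lemma~\ref{thm:prop-diag}, by factoring the diagonal of $Y$ over $\Gamma$ through $Y\times_X Y$. The central observation is that
\[ Y \xto{\Delta_{Y/X}} Y\times_X Y \to Y\times_\Gamma Y \]
factors the diagonal $\Delta_{Y/\Gamma}$, and the second map here is the pullback of $\Delta_{X/\Gamma}\colon X\to X\times_\Gamma X$ along the fibration $Y\times_\Gamma Y \fib X\times_\Gamma X$. Since $\Delta_{X/\Gamma}$ is an acyclic cofibration (as $X$ is a proposition over $\Gamma$, by Lemma~\ref{thm:prop-diag}), right properness together with the stability of cofibrations under pullback (axiom~\ref{item:m1a}) makes its pullback $Y\times_X Y \to Y\times_\Gamma Y$ also an acyclic cofibration. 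Hence by 2-out-of-3, $\Delta_{Y/X}$ is a weak equivalence if and only if $\Delta_{Y/\Gamma}$ is, which by Lemma~\ref{thm:prop-diag} gives the equivalence of the first two conditions.

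The implication from the third condition to the second is immediate. For the implication from the first to the third, the key is to first choose the proposition structure $s_X\colon X\times_\Gamma X\to X^\ivl_\Gamma$ on $X$ to be \emph{reflexive}, in the sense that $s_X\circ\Delta_{X/\Gamma} = \r_X$; such a choice exists by lifting $\r_X$ against the fibration $X^\ivl_\Gamma\fib X\times_\Gamma X$ along the acyclic cofibration $\Delta_{X/\Gamma}$. Given a proposition structure $s_Y^X\colon Y\times_X Y\to Y^\ivl_X$ on $Y$ over $X$, form the pullback $P := (Y\times_\Gamma Y)\times_{X\times_\Gamma X} X^\ivl_\Gamma$; then the comparison map $Y^\ivl_\Gamma \to P$ is the pullback-corner map for the cofibration $\partial\ivl\hookrightarrow\ivl$ and the fibration $f\colon Y\fib X$ in the simplicial model category $\sM/\Gamma$, hence a fibration by SM7.

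Now consider the lifting problem
\[
\begin{tikzcd}
Y\times_X Y \ar[r] \ar[d,cof,acyc'] & Y^\ivl_\Gamma \ar[d,two heads] \\
Y\times_\Gamma Y \ar[r,"\psi"'] \ar[ur,dashed,"s_Y"] & P
\end{tikzcd}
\]
where the top arrow is $s_Y^X$ composed with the inclusion $Y^\ivl_X\hookrightarrow Y^\ivl_\Gamma$, and $\psi(y_1,y_2) := (y_1,y_2,\,s_X(f(y_1),f(y_2)))$. The hard part is checking commutativity of this square, and this is exactly where reflexivity of $s_X$ is used: on $Y\times_X Y$, where $f(y_1)=f(y_2)$, we have $s_X(f(y_1),f(y_2)) = \r_X(f(y_1))$, matching the image in $X^\ivl_\Gamma$ of the path $s_Y^X(y_1,y_2)$ which lies over a constant path in $X$. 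The left vertical is an acyclic cofibration by the same pullback argument as in the first paragraph, so a lift $s_Y$ exists; commutativity of the upper triangle says $s_Y$ is a section of $Y^\ivl_\Gamma \to Y\times_\Gamma Y$, and commutativity of the lower triangle says that $f$ becomes a map of propositions. Everything else in the proof reduces to standard lifting and pullback arguments in a good model category.
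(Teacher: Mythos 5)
Your proof is correct and follows essentially the same route as the paper's: the equivalence of (i) and (ii) is obtained from exactly the same pullback square exhibiting $Y\times_X Y \to Y\times_\Gamma Y$ as the pullback of the acyclic cofibration $\Delta_{X/\Gamma}$ along a fibration, followed by 2-out-of-3 and \cref{thm:prop-diag}, and (iii) is obtained by lifting against the same pullback-corner fibration $Y^\ivl_\Gamma \fib (Y\times_\Gamma Y)\times_{X\times_\Gamma X}X^\ivl_\Gamma$. The only divergence is in the final lifting square: the paper lifts the reflexivity term $\r_Y$ along the diagonal $Y\to Y\times_\Gamma Y$ (an acyclic cofibration once (ii) is established), which sidesteps your extra normalization of $s_X$ to be reflexive and the attendant commutativity check, whereas your variant is equally valid and has the minor bonus that the resulting proposition structure over $\Gamma$ restricts to the given one over $X$.
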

\begin{proof}
  We have a pullback square
  \[
  \begin{tikzcd}
    Y\times_X Y \ar[r,cof,acyc] \ar[d] \ar[dr,phantom,near start,"\lrcorner"] & Y\times_\Gamma Y \ar[d,fib]\\
    X \ar[r,cof,acyc] & X\times_\Gamma X
  \end{tikzcd}
  \]
  in which the right-hand map is a fibration (the product of two fibrations in $\sM/\Gamma$) and the bottom map is an acyclic cofibration by \cref{thm:prop-diag}, hence the top map is also an acyclic cofibration.
  Thus, by the 2-out-of-3 property, the diagonal $Y\to Y\times_X Y$ is a weak equivalence (i.e.\ $Y$ can be structured as a proposition over $X$) if and only if the diagonal $Y\to Y\times_\Gamma Y$ is a weak equivalence (i.e.\ $Y$ can be structured as a proposition over $\Gamma$).
  Moreover, in this case we can structure $Y$ as a proposition over $\Gamma$ making $Y\to X$ a map of propositions by lifting in the following square:
  \[
  \begin{tikzcd}
    Y \ar[r] \ar[d,cof,acyc] & Y^\ivl_\Gamma \ar[d,fib]\\
    Y\times_\Gamma Y \ar[r] & (Y\times_\Gamma Y)\times_{(X\times_\Gamma X)} X^\ivl_\Gamma
  \end{tikzcd}
  \]
  Here the right-hand map is a fibration since it is the pullback corner product in $\sM/\Gamma$ of the fibration $Y\fib X$ and the cofibration $\mathbf{2} \to \ivl$, so such a lift exists.
\end{proof}

The following proof introduces one more new idea: algebraic pushouts of monads to ``glue in recursive paths''.

\begin{thm}
  If $\sM$ is an excellent model category, then $\fibmf$ has a stable class of propositional truncations relative to its canonical stable classes of identity types and dependent identity types, and hence $\fibmfbang$ has strictly stable propositional truncations.
\end{thm}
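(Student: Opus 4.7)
The plan is to construct $\brck A$ as the free algebra, on $A$ itself, of an accessible monad on $\sM/\Gamma$ obtained as the algebraic coproduct of the fibrant replacement monad $\dR_\Gamma$ (as used in \cref{sec:natural-numbers,sec:w-types}) with a new ``proposition-generating monad'' $\dT_{\mathsf{prop}}$. This directly realises the strategy announced at the start of this section: the recursive path-constructor $\treq$ is expressed as a ``cell'' glued in at the level of monads.

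The core new step is the definition of $\dT_{\mathsf{prop}}$. I consider the pointed endofunctor $F_{\mathsf{prop}}$ on $\sM/\Gamma$ sending $X\fib\Gamma$ to the pushout
\[
F_{\mathsf{prop}}(X) \;:=\; X \sqcup_{(X\times_\Gamma X)\ten\ltwo} (X\times_\Gamma X)\ten\ivl,
\]
where the left leg is the fold of the two projections $X\times_\Gamma X\toto X$. By adjointness between simplicial powers and copowers, a retraction of the unit $\eta_X : X\to F_{\mathsf{prop}}(X)$ is precisely a proposition structure on $X$ in the sense of \cref{defn:prop}. Since $\ltwo\cof\ivl$ is an acyclic cofibration and every monomorphism is a cofibration, the pushout-product axiom in the simplicial model category $\sM/\Gamma$ makes $(X\times_\Gamma X)\ten\ltwo \to (X\times_\Gamma X)\ten\ivl$ an acyclic cofibration, hence so is its pushout $\eta_X$. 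Accessibility of $F_{\mathsf{prop}}$ (as a composite of pullbacks, copowers, and pushouts) together with combinatoriality of $\sM$ yields an algebraically-free monad $\dT_{\mathsf{prop}}$ on the pointed endofunctor. Set $\dT:=\dT_{\mathsf{prop}}+\dR_\Gamma$ and define $\brck A:=\dT(A)$. The $\dT$-algebra structure makes $\brck A\fib\Gamma$ a fibration (from $\dR_\Gamma$) and a proposition over $\Gamma$ (from $\dT_{\mathsf{prop}}$); the unit furnishes $\tr:A\to\brck A$, and the proposition structure furnishes $\treq$ via the canonical identity types of \cref{thm:stable-id}.

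The elimination rule is then pure initiality, in analogy with the arguments at the end of \cref{sec:natural-numbers,sec:w-types}. Given elimination data $p:C\fib\brck A$ as in \cref{defn:proptrunc}, the dependent-identity structure translates by \cref{thm:prop-over} into saying that $C\fib\Gamma$ is a proposition and $p$ is a map of propositions. Choosing an $\dR$-algebra structure on $p$ and composing with $\brck A\fib\Gamma$ makes $C$ into an $\dR_\Gamma$-algebra with $p$ an $\dR_\Gamma$-morphism, so $C$ becomes a $\dT$-algebra and $p$ a $\dT$-morphism. Freeness of $\brck A=\dT(A)$ applied to $t:A\to C$ yields a $\dT$-algebra map $\tilde t:\brck A\to C$ with $\tilde t\circ\tr=t$; the composite $p\circ\tilde t$ is a $\dT$-algebra endomorphism of $\brck A$ extending $\tr$, hence equals $1_{\brck A}$ by freeness applied to $\tr$ itself, so $\tilde t$ is the desired section $\trrec$.

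For weak stability, I restrict to fibrant contexts and show that $\sigma^*\brck A$ is a propositional truncation of $\sigma^*A$ for any $\sigma:\Delta\to\Gamma$ in $\Mf$, factored as an acyclic cofibration followed by a fibration. For $\sigma$ a fibration, the adjunction and Beck--Chevalley argument of \cref{thm:wk-w} transfers the algebra structure and the elimination data across $\sigma^*\adj\sigma_*$. For $\sigma$ an acyclic cofibration between fibrant objects, $\sigma$ admits a retraction, and one proves an analogue of \cref{thm:W-cofmnd}: a $\dT$-algebra structure on a pullback $C=(\brck A^*\sigma)^*D$ over $\sigma^*\brck A$ extends to one on $r^*C$ over $\brck A$ that pulls back to $C$. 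I expect this extension lemma to be the main obstacle. As in \cref{thm:W-cofmnd}, it reduces to showing that a certain comparison map between instances of $F_{\mathsf{prop}}$ is an acyclic cofibration; since $F_{\mathsf{prop}}$ is assembled from pullback-stable colimits and simplicial copowers (using \cref{item:m1a} and that $\sM$ is simplicially locally cartesian closed), this should follow from right-properness together with a pushout-product cube argument parallel to the one used in \cref{thm:coproduct}. Combined with the (essentially tautological) corresponding statement for $\dR_\Gamma$, and with the local-universes coherence lemma proved immediately above, this yields strictly stable propositional truncations in $\fibmfbang$.
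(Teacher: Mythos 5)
Your overall strategy coincides with the paper's: build $\brck A$ as the free algebra on $A$ for the algebraic coproduct of a ``proposition monad'' with $\dR_\Gamma$, get the eliminator and its computation rule from freeness exactly as you describe (including the trick that $p\circ\tilde t$ factors $\tr$ through itself and is therefore the identity), and prove weak stability by factoring $\sigma$ and treating fibrations via $\sigma_*$ and acyclic cofibrations via a retraction. Two of your steps differ in packaging or substance. First, you build the proposition monad as the algebraically-free monad on the pointed endofunctor $X\mapsto X\sqcup_{(X\times_\Gamma X)\ten\ltwo}(X\times_\Gamma X)\ten\ivl$, whereas the paper forms the algebraic pushout of monads $\dId\sqcup_{\dT_{\mathbf{2}}}\dT_{\ivl}$; these have the same algebras (propositions over $\Gamma$ and maps thereof), so the resulting monads agree, and the paper's formulation is chosen mainly because it foreshadows the general cell-monad machinery. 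Second, for the acyclic-cofibration case you propose an extension lemma in the style of \cref{thm:W-cofmnd}, reducing to a pushout-product of the acyclic cofibration $C\times_\Delta C\to r^*C\times_\Gamma r^*C$ with the cofibration $\ltwo\to\ivl$; this works and is exactly what the paper does later for general cell monads in \cref{thm:cell-cofmnd}. The paper's proof in this section instead uses a shortcut special to propositions: it shows $r^*C$ admits \emph{some} proposition structure by a 2-out-of-3 argument on diagonals (\cref{thm:prop-diag}) and then upgrades it to a structure making $r^*C\to\brck A$ a map of propositions via \cref{thm:prop-over}. Your route is more uniform; the paper's avoids setting up the lifting diagram. (Minor omissions in your sketch: you should also lift the given section over $\sigma^*A$ to one over $A$ along the acyclic cofibration $\sigma^*A\to A$, and note that pullback preserves propositions since it preserves limits and simplicial copowers.)

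One claim in your construction is false, though fortunately not load-bearing: $\ltwo\to\ivl$ is a cofibration of simplicial sets but \emph{not} an acyclic one (it is $\partial\Delta^1\to\Delta^1$, which is not a weak equivalence), so $(X\times_\Gamma X)\ten\ltwo\to(X\times_\Gamma X)\ten\ivl$ and hence the unit $\eta_X:X\to F_{\mathsf{prop}}(X)$ are not acyclic cofibrations. Indeed they cannot be: if $\eta_X$ were an acyclic cofibration, every fibration over $\Gamma$ would admit a retraction of $\eta_X$ by lifting, i.e.\ every fibrant type would be a proposition. Since nothing else in your argument uses this acyclicity (the identification of retractions of $\eta_X$ with proposition structures, and the existence of the free monad via accessibility, are all that is needed), you should simply delete that sentence.
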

\begin{proof}
  Let $F_{\ivl}$ be the endofunctor of $\sM/\Gamma$ defined by $F_{\ivl}(X) = (X \times_\Gamma X) \otimes \ivl$.
  Then an $F_{\ivl}$-endofunctor-algebra structure on $X$ consists of a simplicial homotopy between two maps $X \times_\Gamma X \toto X$.
  These two maps can be arbitrary and are given as part of the algebra structure, but we can identify them with the projections by taking an algebraic colimit of monads.

  If we define $F_{\mathbf{2}}(X) = (X\times_\Gamma X) \otimes \mathbf{2}$, then an $F_{\mathbf{2}}$-endofunctor-algebra structure consists of two maps $X \times_\Gamma X \toto X$.
  Moreover, there is a natural transformation $F_{\mathbf{2}} \to F_\ivl$ such that the induced functor from $F_{\ivl}$-endofunctor-algebras to $F_{\mathbf{2}}$-endofunctor-algebras forgets the homotopy but remembers the maps.
  And if $\dT_{\mathbf{2}}$ and $\dT_\ivl$ denote the algebraically-free monads on these (accessible) endofunctors, we have an induced monad morphism $\dT_{\mathbf{2}} \to \dT_\ivl$ that has the same effect on algebras.

  Let $\dId$ denote the identity functor with its unique monad structure, so that every object has a unique $\dId$-algebra structure.
  Every object also has a natural (though not unique) $\dT_{\mathbf{2}}$-algebra structure consisting of the two projections, so there is a monad morphism $\dT_{\mathbf{2}} \to \dId$.
  Let $\dtprop$ be the algebraic monad pushout:
  \[
  \begin{tikzcd}
    \dT_{\mathbf{2}} \ar[r] \ar[d] & \dId \ar[d] \\ \dT_{\ivl} \ar[r] & \dtprop
  \end{tikzcd}
  \]
  By definition, this means that a $\dtprop$-algebra structure on an object consists precisely of a $\dT_{\ivl}$-algebra structure and (the unique) $\dId$-algebra structure whose underlying $\dT_{\mathbf{2}}$-algebra structures coincide.
  In other words, a $\dtprop$-algebra is exactly a proposition over $\Gamma$.

  Finally, consider the algebraic monad coproduct $\dtprop + \dR_\Gamma$, and let $\brck A = (\dtprop+\dR_\Gamma)(A)$.
  (More precisely, we define a ``good'' propositional truncation to be a fibration over $\Gamma$ equipped with an isomorphism to $(\dtprop+\dR_\Gamma)(A)$.)
  Then $\brck A \to \Gamma$ is a fibration, since it is an $\dR_\Gamma$-algebra, and $\brck A$ is a proposition since it is a $\dtprop$-algebra, while the unit of the monad $\dtprop + \dR_\Gamma$ supplies $\tr:A\to\brck A$.

  The eliminator data consists of a fibration $p:C\fib \brck A$ that is also a $\dtprop$-algebra morphism, together with a map $t:A\to C$ over $\tr$.
  As usual, we choose an $\dR$-algebra structure on $p$, inducing an $\dR$-algebra structure on the composite $C\fib \brck A \fib \Gamma$ such that $p$ becomes an $\dR$-algebra morphism.
  Thus, $t:A\to C$ is a map from $A$ to a $(\dtprop+\dR_\Gamma)$-algebra, so it factors uniquely through the free $(\dtprop+\dR_\Gamma)$-algebra on $A$, which by definition is $\brck A$.
  And when we compose this factorization $f:\brck A \to C$ with $p$, we get a factorization of $\tr$ through itself, which must therefore be the identity; thus $f$ is a section of $p$ as desired.

  It remains to prove weak stability.
  Since pullback preserves limits and simplicial homotopies, it preserves the introduction rule, i.e.\ it takes propositions to propositions.
  For elimination and computation, as in \cref{thm:wk-w} we deal separately with the cases when $\sigma:\Delta\to\Gamma$ is a fibration or an acyclic cofibration.

  If $\sigma$ is a fibration and $C\fib \sigma^*\brck A$ is a fibration of propositions over $\Delta$, then as usual we have a fibration $\sigma_* C \fib \sigma_* \sigma^* \brck A$ over $\Gamma$.
  Moreover, $\sigma_*$ also preserves propositions, since it is a simplicial right adjoint and hence preserves limits and simplicial homotopies.
  Thus, $\sigma_* C\to\sigma_*\sigma^*\brck A$ is also a fibration of propositions over $\Gamma$, while the unit $\eta : \brck A \to \sigma_* \sigma^* \brck A$ is a map of propositions.
  Thus, the pullback $\eta^* \sigma_* C \fib \brck A$ is also a proposition over $\Gamma$.
  Moreover, if $C$ has a section over $\sigma^*A$, then $\sigma_* C$ has a section over $A$, which therefore extends to a section of $\eta^* \sigma_* C$, i.e.\ a map $\brck A \to \sigma_* C$ over $\eta$.
  This transposes to a section of $C$, as desired.

  On the other hand, if $\sigma$ is an acyclic cofibration, then so is $\brck{A}^*\sigma : \sigma^*\brck A \to \brck A$, since $\brck A \fib \Gamma$ is a fibration.
  Since $\Delta$ is fibrant, so is $\sigma^*\brck A$, and so just as in \cref{thm:wk-w} $\brck{A}^*\sigma$ admits a retraction $r$.
  Now suppose $C\fib \sigma^*\brck A$ is a fibration of propositions over $\Delta$, with a section $c$ over $\sigma^*A$.
  Then $r^*C \fib \brck A$ is a fibration whose pullback along $\sigma$ is $C$, and by lifting in the following square
  \[
  \begin{tikzcd}
    \sigma^*A \ar[d,cof,acyc] \ar[r,"c"] & C \cong \sigma^*r^*C \ar[r] & r^*C \ar[d,fib]\\
    A \ar[rr] && \brck A
  \end{tikzcd}
  \]
  we obtain a section of $r^*C$ over $A$ whose pullback to $\Delta$ is the given $c$.
  Moreover, since $C\cong \sigma^* r^* C$, we have the following commutative square
  \[
  \begin{tikzcd}
    C \ar[d,acyc] \ar[r,cof,acyc] & r^* C \ar[d]\\
    C\times_\Delta C \ar[r,acyc] & r^*C \times_\Gamma r^*C
  \end{tikzcd}
  \]
  in which all maps except the right-hand one are known to be weak equivalences: the left-hand one since $C$ is a proposition over $\Gamma$, the top since it is a pullback of the weak equivalence $\sigma$ along a fibration, and the bottom since it is a homotopy pullback of weak equivalences.
  Thus, by 2-out-of-3 the right-hand map is also a weak equivalence, i.e.\ $r^*C$ is a proposition over $\Gamma$.
  Therefore, by \cref{thm:prop-over}, we can give it the structure of a proposition such that $r^*C \to \brck A$ is a map of propositions.
  So we can apply $\trrec$ to obtain a section of it, which pulls back to the desired section of $C$.
\end{proof}

\section{Cell monads}
\label{sec:cell-monads}

At this point we have enough examples to motivate and formulate a general notion of higher inductive type on the semantic side, although it remains an open problem to give a syntactic presentation of equal generality.
In this section we consider ``higher inductive types'' without parameters; in the next section we will generalize to allow parameters.

We start with a very general notion of ``typal initial algebra'' in the comprehension category context for which we can isolate the conditions necessary to prove the local universes coherence theorem.

\begin{defn}
  For any category $\C$, a \textbf{fibred category of structures} over $\C$ is a comprehension category structure $(\C,\S)$ such that the functor $\S\to\C^\to$ is a faithful amnestic isofibration.\footnote{Recall that a functor $U:\cA\to\cB$ is \emph{amnestic} if whenever $f:a\cong b$ is an isomorphism in $\cA$ such that $U a = U b$ and $U f = 1_{U a}$, then also $a = b$ and $f = 1_a$.
  It is an \emph{isofibration} if for any isomorphism $g:U a \cong b$, there exists an isomorphism $f:a\cong a'$ such that $U a' = b$ and $U f = g$.}
  We call an object of $\S$ an \textbf{\S-algebra}, and we call a lifting of $(X\to \Gamma)\in\C^\to$ to \S an \textbf{\S-structure} on it.
  We call a morphism in \S an \textbf{\S-morphism}.
\end{defn}

Note that by the assumptions on $\S$, the collection of \S-structures on any $X\to \Gamma$ is a partially ordered set (not just a preordered set).

\begin{defn}
  Let $(\C,\T)$ be a comprehension category, and let \S be a fibred category of structures over \C (note that there is no relation between \S and \T).
  A \textbf{typal initial $\S$-algebra} over $\Gamma$ is a type $H\in \T(\Gamma)$ together with an \S-structure on its comprehension $\Gamma.H\to\Gamma$, such that for any $C\in\T(\Gamma.H)$ together with an \S-algebra structure on the composite $\Gamma.H.C\to\Gamma.H\to\Gamma$ such that $\Gamma.H.C\to\Gamma.H$ is an \S-morphism, there exists a section $\Gamma.H \to \Gamma.H.C$ of $\Gamma.H.C\to\Gamma.H$ that is also an \S-morphism.\mswarning{This is naturally phrased using fibrations!}

  We say $\C$ has \textbf{weakly stable typal initial \S-algebras} if for any $\Gamma$ there exists a typal initial $\S$-algebra over $\Gamma$ whose reindexing along any $\sigma:\Delta\to\Gamma$ is again a typal initial $\S$-algebra.
  If $\C$ is split, we say it has \textbf{strictly stable typal initial \S-algebras} if we have an operation assigning to each $\Gamma$ a typal initial $\S$-algebra over $\Gamma$ with specified $\S$-algebra sections, in a way that is strictly preserved by the split reindexing functors.
\end{defn}

\begin{defn}\label{defn:replift}
  Let $(\C,\T)$ be a comprehension category and \S a fibred category of structures over \C.
  Given $A\in\T(\Gamma)$ and $B\in\T(\Gamma.A)$ and any \S-structure $\sA$ on $\Gamma.A\to\Gamma$, an \textbf{\S-lift} of $\sA$ to $B$ is an \S-structure on the composite $\Gamma.A.B\to\Gamma.A\to\Gamma$ such that $\Gamma.A.B\to\Gamma.A$ becomes an \S-morphism over $\Gamma$.
  We say that \S has \textbf{representable lifts} if for any $A,B,\sA$, the functor $(\C/\Gamma)\op \to \mathbf{Set}$ defined by
  \begin{equation}\label{eq:replift}
    (\sigma:\Delta\to\Gamma) \mapsto \{ \text{\S-lifts of $\sA[\sigma]$ to $B[\sigma]$}\}
  \end{equation}
  is representable.
  In other words, there exists a map $\varpi:V_{\sA,B}\to\Gamma$ and an $\S$-lift $\sB$ of $\sA[\varpi]$ to $B[\varpi]$, such that for any $\sigma:\Delta\to\Gamma$ and any $\S$-lift $\sB'$ of $\sA[\sigma]$ to $B[\sigma]$, there is a unique map $\tau:\Delta\to V_{\sA,B}$ such that $\varpi \circ \tau = \sigma$ and $\sB' = \sB[\tau]$.
\end{defn}

Note that the notion of representable lift \emph{does} involve both \S and \T, and also that the assumptions on \S ensure that the functor~\eqref{eq:replift} really does sensibly take values in (partially ordered) \emph{sets} (rather than groupoids or categories).

\begin{lem}
  Let $(\C,\T)$ be a comprehension category and \S a fibred category of structures over \C.
  If \C has weakly stable typal initial \S-algebras, \C has a terminal object, and \S has representable lifts, then $\C_!$ has strictly stable typal initial \S-algebras.
\end{lem}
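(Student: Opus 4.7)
The plan is to follow the template established for $\N$, $\W$-types, pushouts, and propositional truncation: I would build one local universe for the formation rule (producing $H$ together with its \S-algebra structure) and a second for the elimination rule (producing the required \S-morphism section). Throughout I rely on the standing hypothesis~\eqref{eq:lf}, under which $\C_!$ is defined and in particular exponentials along display maps exist.

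For the formation rule, first I would fix once and for all a typal initial \S-algebra $H_0 \in \T(1)$ over the terminal object with its chosen \S-structure $\sH_0$. By weak stability, for every $\Gamma$ the reindexing $H_0[!_\Gamma] \in \T(\Gamma)$ is again a typal initial \S-algebra, and it inherits a canonical \S-structure by the comprehension-category structure of \S. I would then define the strictly stable $H$ in $\C_!$ to be represented by $(V_H, E_H, \name{H}) = (1, H_0, !_\Gamma)$; strict stability under any $\sigma:\Delta\to\Gamma$ is automatic because $!_\Gamma \circ \sigma = !_\Delta$.

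For the elimination rule, the input is a type $C \in \T_!(\Gamma.H)$ --- that is, a local universe $(V_C, E_C, \name{C}: \Gamma.H \to V_C)$ --- equipped with an \S-lift $\sC$ of $\sH_0[!_\Gamma]$ to $C$. Since $\Gamma.H$ is the pullback of $H_0$ along $!_\Gamma$, condition~\eqref{eq:lf} supplies an exponential $V_0 := V_C^{H_0}$ in $\C$, and I would transpose $\name{C}$ across the adjunction to a map $\widehat{C}:\Gamma \to V_0$. Pulling $E_C$ back along the counit $\mathrm{ev}: V_0.H_0 \to V_C$ gives a tautological type $E := \mathrm{ev}^*E_C \in \T(V_0.H_0)$ whose reindexing along $\widehat{C}$ recovers $C$. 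Next I would apply representability of lifts to $\sH_0[!_{V_0}]$ and $E$, producing a representing morphism $\varpi:V_1 \to V_0$ carrying a universal \S-lift $\sE$; the data $(C, \sC)$ then corresponds to a unique $\tau:\Gamma\to V_1$ with $\varpi\tau = \widehat{C}$ and $\sE[\tau] = \sC$.

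With $V_1$ in place, all the hypotheses of the weakly stable eliminator are present over it: $H_0[!_{V_1}]$ is a typal initial \S-algebra, and $(E[\varpi], \sE)$ is an \S-lift thereof, so there is a universal \S-morphism section $s_1: V_1.H_0 \to V_1.H_0.E[\varpi]$. Pulling $s_1$ back along $\tau$ gives the required section over $\Gamma$, and stability of \S-morphisms under reindexing ensures it remains an \S-morphism. Strict stability under $\sigma:\Delta\to\Gamma$ follows directly from the uniqueness clause in representable lifts: the classifier of the reindexed data is forced to equal $\tau\circ\sigma$, so the section produced over $\Delta$ literally agrees with the reindexing along $\sigma$ of the section over $\Gamma$. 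The main obstacle will be bureaucratic rather than conceptual --- verifying that the chain of transpositions and pullbacks genuinely encodes both $C$ and its chosen \S-lift in a single map $\tau$, which in particular uses the faithful amnestic isofibration hypothesis on $\S \to \C^\to$ to guarantee that $\sE[\tau]$ equals $\sC$ on the nose rather than only up to isomorphism.
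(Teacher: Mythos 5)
Your proposal is correct and follows essentially the same route as the paper: the formation/introduction local universe is the terminal object carrying a chosen weakly stable typal initial \S-algebra, and the elimination local universe is built by first forming $V_H\triangleleft V_C$ (which, since $V_H=1$, is exactly your exponential $V_C^{H_0}$) and then applying representable lifts over it, with strict stability coming from composition of classifying maps. Your extra care about the amnestic isofibration hypothesis and the explicit appeal to~\eqref{eq:lf} are both consistent with the paper, which likewise uses part of~\eqref{eq:lf} for the $\triangleleft$ construction.
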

\begin{proof}
  Choose a weakly stable typal initial \S-algebra $H\in\T(1)$ with \S-structure \sH, and let $V_H = 1$ and $E_H = H$ with $\name{H}= 1$.
  Then assign to each $\Gamma$ the algebra $(V_H,E_H,!)$; this is strictly stable by composition of names, as usual.
  That is, since the ``formation'' and ``introduction'' rules (which jointly correspond to a choice of \S-structure on the comprehension of a type) have no premises, their local universe is the terminal object.

  For the elimination rule, given a type $C\in \T_!(\Gamma.H[\Gamma])$ determined by $(V_C,E_C,\name{C})$, consider first the object $V = V_H\triangleleft V_C$ constructed as in~\cite{lw:localuniv}, which classifies maps $\Delta \to V_H = 1$ (which are unique) together with maps $\Delta.H[\Delta] \to V_C$.
  In particular, we have a universal map $c:V.H[V]\to V_C$ giving $E_C[c]\in \T(V.H[V])$.

  Now, by representable lifts, we have a map $V_{\sH[V],E_C[c]} \to V$ representing lifts of the \S-structure $\sH[V]$ of $H[V]$ to $E_C[c]$.
  Combining universal properties, we see that maps $\Gamma\to V_{\sH[V],E_C[c]}$ correspond to choices of a map $\name{C}:\Gamma.H[\Gamma]\to V_C$ together with a lift of $\sH[\Gamma]$ to $E_C[\name{C}]$.
  This is exactly the input to the ``elimination and computation rules'' of a typal initial \S-algebra, so we take $V_{\sH[V],E_C[c]}$ as the local universe for that.
  The weak stability of $H$ implies that the universal lift over $V_{\sH[V],E_C[c]}$ has a section that is an \S-morphism, which we can then reindex to $\Gamma$ to obtain a strictly stable eliminator.
\end{proof}

Note that we did not yet need to explicitly assume all of~\eqref{eq:lf}; the assumption of representable lifts encodes all of this that's relevant to \S.

Now we move on to good model categories.
In this case, our work in previous sections suggests that we should consider structures defined as the algebras for a monad.
To obtain a \emph{fibred} category of algebras, we need to consider fibred monads as well.

\begin{defn}
  A \textbf{fibred monad} on a category \C with pullbacks is a monad $\dT$ on $\C^\to$ in the category of fibrations over \C.
  That is, $\dT$ and its unit $\eta$ and multiplication $\mu$ live in the (strict) slice 2-category of \cCat over $\C$, meaning that $\dT$ preserves codomains and the codomain-part of $\eta$ and $\mu$ are the identity, and also $\dT$ preserves cartesian morphisms.
\end{defn}

In particular, such a \dT induces a monad $\dT_\Gamma$ on each slice category $\C/\Gamma$, and each pullback functor $\sigma^* : \C/\Gamma \to \C/\Delta$ is a \emph{strong monad morphism}, i.e.\ it satisfies $\sigma^* \dT_\Gamma \cong \dT_\Delta \sigma^*$ coherently.
That is, a fibred monad is equivalently an \emph{indexed} monad.

Let $\dT\alg$ denote the category of \dT-algebras, which is again fibred over \C; the fiberwiseness of $\eta$ means that the fiber of $\dT\alg$ over $\Gamma$ is $\dT_\Gamma\alg$.
We record:

\begin{lem}
  For any fibred monad \dT on a category \C with pullbacks, $\dT\alg$ is a fibred category of structures over \C.\qed
\end{lem}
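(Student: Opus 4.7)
The plan is to verify each clause of the definition in turn: that $(\C, \dT\alg)$ carries a comprehension category structure, and that the associated functor $\chi: \dT\alg \to \C^\to$ is faithful, amnestic, and an isofibration. All four parts follow essentially directly from the fibredness of $\dT$ together with standard facts about monad-algebra categories; there is no deep new input.

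First I would establish the Grothendieck fibration plus comprehension structure. Given $\sigma: \Delta \to \Gamma$ in $\C$ and a $\dT_\Gamma$-algebra $(X \to \Gamma, a)$, the pullback $\sigma^* X \to \Delta$ acquires a $\dT_\Delta$-algebra structure via the coherent isomorphism $\sigma^* \dT_\Gamma \iso \dT_\Delta \sigma^*$ exhibiting $\dT$ as fibred, namely $\dT_\Delta(\sigma^*X) \iso \sigma^* \dT_\Gamma X \xto{\sigma^* a} \sigma^* X$. The resulting morphism $(\sigma^* X, \sigma^* a) \to (X, a)$ in $\dT\alg$ is cartesian (its universal property reduces to that of the pullback in $\C$ plus faithfulness of the forgetful functor, proven below), and $\chi$ sends it to a pullback square in $\C^\to$ by construction. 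Hence $\dT\alg \to \C$ is a Grothendieck fibration whose comprehension functor sends cartesian morphisms to pullback squares.

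For faithfulness of $\chi$: a $\dT$-algebra morphism is by definition a morphism of underlying objects satisfying a commutativity condition with the structure maps; there is no extra data to choose, so $\chi$ is faithful. For amnesticness: if $\phi: (X \to \Gamma, a) \iso (X \to \Gamma, a')$ is an isomorphism in $\dT\alg$ with $\chi(\phi) = 1_{X \to \Gamma}$, then the algebra-morphism axiom $\phi \circ a = a' \circ \dT(\phi)$ collapses to $a = a'$, so the two algebras coincide literally and $\phi$ is the identity morphism on this one algebra.

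For the isofibration property: suppose given $(X \xto{p} \Gamma, a) \in \dT\alg$ together with an isomorphism in $\C^\to$ to some $(Y \xto{q} \Delta)$, consisting of isos $\phi: X \iso Y$ and $\psi: \Gamma \iso \Delta$ with $q \circ \phi = \psi \circ p$. The cartesian lift of $\psi^{-1}: \Delta \to \Gamma$ at $(X, a)$ produces an algebra $((\psi^{-1})^* X \to \Delta, b)$. Since $p \circ \phi^{-1} = \psi^{-1} \circ q$, the pullback universal property yields an iso $Y \iso (\psi^{-1})^* X$ in $\C/\Delta$, along which we transport $b$ (which is unambiguous, since isomorphism-transport of algebra structure is automatic in any monad-algebra category) to obtain an algebra structure on $Y \to \Delta$ and an iso in $\dT\alg$ whose image under $\chi$ is the given square. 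The mildly subtle point throughout is only the translation between the fibration of $\dT\alg$ over $\C$ and the comprehension functor into $\C^\to$, but this translation is built into the definition of fibred monad via preservation of cartesian morphisms; I expect no real obstacle.
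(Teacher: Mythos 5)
Your proposal is correct. The paper offers no proof at all here --- the lemma is stated with an immediate \qed, the verification being regarded as routine --- and what you have written out is exactly the routine verification being elided: cartesian lifts come from pullback together with the coherence isomorphism $\sigma^*\dT_\Gamma \cong \dT_\Delta\sigma^*$ supplied by fibredness of $\dT$, and faithfulness, amnesticity, and the isofibration property all follow from the fact that an algebra morphism is a morphism of underlying objects satisfying a property. The only point worth tightening is your appeal to ``faithfulness of the forgetful functor'' when checking that the induced factorization through $\sigma^*X$ is an algebra map: what one actually uses is that the two pullback projections out of $\sigma^*X$ are jointly monic, so the algebra-morphism square can be verified after composing with the cartesian projection to $X$; this is the standard argument and causes no difficulty.
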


\begin{lem}
  If \dT is a fibred monad on a model category \sM, then the full subcategory $\dT\algf$ of $\dT\alg$ consisting of \dT-algebra structures on fibrations over fibrant objects is a fibred category of structures over the full subcategory of fibrant objects \Mf.\qed
\end{lem}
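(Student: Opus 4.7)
The plan is to verify, one condition at a time, that the properties listed in the previous lemma all restrict from $\dT\alg \to \sM^\to$ to the corresponding functor $\dT\algf \to \Mf^\to$; there is essentially no new content, only a bookkeeping check that the class of fibrations between fibrant objects is closed under the relevant operations. The first step is to observe that $\dT\algf$ is by definition a full subcategory of $\dT\alg$, and $\Mf$ is a full subcategory of $\sM$, so the forgetful functor $\dT\algf \to \Mf^\to$ is well-defined: for any morphism between two objects of $\dT\algf$, the image in $\sM^\to$ is a square both of whose vertical arrows are fibrations and both of whose horizontal arrows have source and target in $\Mf$.

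Next, I would check that $(\Mf,\dT\algf)$ is a comprehension category, meaning that the composite $\dT\algf \to \Mf^\to \to \Mf$ is a Grothendieck fibration with cartesian arrows sent to pullback squares. Given a $\dT$-algebra on a fibration $p : X \fib \Gamma$ with $\Gamma \in \Mf$, and a morphism $\sigma : \Delta \to \Gamma$ between fibrant objects, the previous lemma supplies a cartesian lift in $\dT\alg$ whose domain is the $\dT$-algebra carried by the pullback $\sigma^* p : \sigma^* X \to \Delta$. Since pullbacks of fibrations are fibrations, $\sigma^* p$ is a fibration, and its codomain $\Delta$ is fibrant by hypothesis, so this lift lies in $\dT\algf$ and is again cartesian there.

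Finally, I would verify faithfulness, amnesty, and the isofibration property. Faithfulness is immediate from fullness of $\dT\algf \hookrightarrow \dT\alg$. Amnesty likewise transfers: any algebra isomorphism in $\dT\algf$ that projects to an identity is in particular such an isomorphism in $\dT\alg$ and hence is itself an identity. For the isofibration property, given a $\dT\algf$-algebra structure on $p : X \fib \Gamma$ and an isomorphism $(p) \cong (q : Y \to \Gamma')$ in $\Mf^\to$, I would lift this isomorphism in $\dT\alg$ to obtain a $\dT$-algebra structure on $q$; since $q$ is isomorphic in $\sM^\to$ to a fibration and $\Gamma' \in \Mf$ by hypothesis, the lifted algebra lies in $\dT\algf$, giving the required lift.

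The only potential subtlety, and hence the main thing worth checking carefully, is that the comprehension-category-theoretic reindexing used in $\dT\alg$ really stays inside the fibrant subcategory; this rests entirely on pullback-stability of fibrations in $\sM$ and the fact that we only reindex along morphisms in $\Mf$, so no genuine obstacle arises.
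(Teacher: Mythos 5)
Your proof is correct and is exactly the routine verification the paper has in mind: the lemma is stated with its proof omitted as immediate, and your check---that cartesian lifts stay inside $\dT\algf$ because fibrations over fibrant objects are stable under pullback along maps of fibrant objects, and that faithfulness, amnesty, and the isofibration property all restrict from $\dT\alg\to\sM^\to$ to the full subcategory---is the intended argument. The only cosmetic point is that faithfulness of the restricted functor follows from faithfulness (not fullness) of the inclusion $\dT\algf\hookrightarrow\dT\alg$ composed with the faithful forgetful functor, but this does not affect the correctness of your argument.
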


The question, therefore, is what conditions we can impose on \dT to ensure that
\ifmpcps
  \begin{enumerate} \renewcommand{\theenumi}{(\arabic{enumi})}
\else
  \begin{enumerate}[label=(\arabic*)]
\fi
\item \fibmf has weakly stable initial $\dT\algf$-structures, and
\item $\dT\algf$ has representable lifts.
\end{enumerate}
It is familiar from homotopy theory that an algebraic theory (such as an operad or, in this case, a monad) must be sufficiently ``cofibrant'' to have good homotopical behavior.
Cofibrant objects, in turn, are generally constructed as ``cell complexes''.
Inspecting our proofs of weak stability in the preceding sections leads us to the following definitions.

\begin{lem}\label{thm:free-fibred}
  Let \sM be a locally presentable category, and $F$ a fibred endofunctor of \sM such that each fiber $F_\Gamma$ is accessible.
  Let $\dT_\Gamma$ be the algebraically-free monad on $F_\Gamma$.
  Then these monads $\dT_\Gamma$ are also indexed, i.e.\ we have coherent isomorphisms $\dT_\Delta \circ \sigma^* \cong \sigma^* \circ \dT_\Gamma$ commuting with the monad structures, and hence induce a fibred monad $\dT$.
\end{lem}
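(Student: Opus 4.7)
Plan. The plan is to construct $\dT$ all at once on the total arrow category $\sM^\to$, rather than building each fibre monad and then gluing. Since $\sM^\to$ is locally presentable (as \sM is) and $F$ is accessible there (by choosing a common regular cardinal $\kappa$ for which each $F_\Gamma$ is $\kappa$-accessible, using that colimits and $F$ are determined fibrewise), Kelly's classical transfinite construction applies to produce the algebraically-free monad $\dT$ on $F$ in $\sM^\to$: iteratively define $T^{(0)} = \mathrm{Id}$, $T^{(\alpha+1)} = \mathrm{Id} + F\circ T^{(\alpha)}$, and take colimits at limit ordinals, converging by accessibility. Restricted to each fibre $\sM/\Gamma$, this yields the algebraically-free monad $\dT_\Gamma$ on $F_\Gamma$, with its monad structure obtained fibrewise from the global one.

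The substantive work will then be to show that this $\dT$ is fibred, i.e., preserves codomains and cartesian morphisms; the claimed coherent isomorphisms $\sigma^*\dT_\Gamma \iso \dT_\Delta\sigma^*$ then fall out as the Beck--Chevalley maps associated to cartesian morphisms. I would argue both preservation properties by transfinite induction along Kelly's iteration. Preservation of codomains is immediate at every stage: the identity, $F$, coproducts with the identity, and the relevant colimits in $\sM^\to$ all respect codomains. For preservation of cartesian morphisms (pullback squares in $\sM^\to$), the successor step uses that $F$ preserves cartesian morphisms, together with the fact that forming $\mathrm{Id} + (\blank)$ in $\sM^\to$ respects pullback squares; the limit step requires that the relevant filtered colimits in $\sM^\to$ commute with pullback along arbitrary $\sigma:\Delta\to\Gamma$.

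The hard part will be precisely this last point: showing that the filtered colimits appearing in Kelly's construction commute with pullback. This is not guaranteed by local presentability alone, but it holds whenever \sM is locally cartesian closed — and hence in every good model category of the paper — because then each $\sigma^*$ has a right adjoint and is therefore cocontinuous. Once fibredness of $\dT$ is established, the coherence of the induced isomorphisms $\sigma^*\dT_\Gamma \iso \dT_\Delta\sigma^*$ under composition of reindexings, as well as their compatibility with the unit and multiplication of $\dT$, will follow automatically: every map in sight is uniquely determined by the universal property of the algebraically-free monad, so the usual diagram-chase arguments close without further input.
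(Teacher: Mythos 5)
Your core mechanism is the same as the paper's: Kelly's transfinite construction of the algebraically-free monad is built entirely from colimits and applications of $F$, and since $\sigma^*$ preserves colimits (which, as you correctly note, needs local cartesian closedness rather than mere local presentability --- the paper's own proof also uses this tacitly) and commutes with $F$ up to coherent isomorphism, the whole construction commutes with $\sigma^*$. The packaging differs, though. The paper runs the construction fibrewise and, for each \emph{individual} $\sigma:\Delta\to\Gamma$, picks a single $\kappa$ at which both $F_\Gamma$ and $F_\Delta$ are accessible, so that $\dT_\Gamma$ and $\dT_\Delta$ are built by ``exactly the same colimits'' and the comparison isomorphism is immediate; coherence is inherited from that of the isomorphisms $F_\Delta\circ\sigma^*\cong\sigma^*\circ F_\Gamma$. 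You instead run one construction on $\sM^\to$ and prove fibredness by transfinite induction.

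The one place your repackaging is weaker is the very first step. The hypothesis only gives accessibility of each $F_\Gamma$ separately, and since $\sM$ has a proper class of objects there need not be a single regular cardinal $\kappa$ at which all fibres are simultaneously accessible; nor is it immediate that $F$ is accessible as an endofunctor of $\sM^\to$, since filtered colimits there move the codomain as well as the domain. So your global iteration need not converge at any one stage, and ``Kelly's construction applies on $\sM^\to$'' is not justified by the stated hypotheses. This is repairable --- either assume uniform accessibility (true in all the paper's applications, where $F$ arises from a single polynomial construction), or define $\dT$ fibrewise and only ever compare two fibres at a time, which is exactly the paper's move and is why it never needs a global bound. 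Your remaining induction steps (codomain preservation at every stage; cartesianness preserved by $F$, by $\mathrm{Id}+(\blank)$, and by the colimits, the last via cocontinuity of $\sigma^*$) are the right checks, and the coherence and monad-compatibility of the resulting isomorphisms do follow from the universal property of the algebraically-free monad as you say.
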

\begin{proof}
  As explained in~\cite{kelly:transfinite,nlab:transfinite}, $\dT_\Gamma$ is constructed out of colimits in \sM using a general method that depends on $F_\Gamma$ only insofar as we have to carry it out to some stage $\kappa$ such that $F_\Gamma$ is $\kappa$-accessible.
  It doesn't matter whether we go ``too far'', since the construction converges: once we reach a $\kappa$ such that $F_\Gamma$ is $\kappa$-accessible, continuing to larger values of $\kappa$ doesn't change the result.
  Therefore, given $\sigma:\Delta\to\Gamma$, we can choose $\kappa$ such that $F_\Gamma$ and $F_\Delta$ are \emph{both} $\kappa$-accessible, and then both $\dT_\Gamma$ and $\dT_\Delta$ can be constructed by \emph{exactly the same colimits}, only involving $F_\Gamma$ and $F_\Delta$ respectively.
  Since $\sigma^*$ preserves all colimits, the isomorphisms $F_\Delta \circ \sigma^* \cong \sigma^* \circ F_\Gamma$ therefore yield isomorphisms $\dT_\Delta \circ \sigma^* \cong \sigma^* \circ \dT_\Gamma$, and so on.
\end{proof}

Now let \sM be an excellent model category.
For any fibration $f:B\to A$ of fibrant objects in \sM and any $\Gamma\in\sM$, let $F^f_\Gamma$ be the polynomial endofunctor of $\sM/\Gamma$ determined by $\Gamma^*(f)$, i.e.\ the composite
\[ \sM/\Gamma \xto{\Gamma^*(B)^*} \sM/\Gamma^*(B) \xto{\Gamma^*(f)_*} \sM/\Gamma^*(A) \xto{\Gamma^*(A)_!} \sM/\Gamma \]
By the Beck-Chevalley condition for dependent exponentials, $F^f_\Gamma$ is an indexed endofunctor, hence we have an fibred endofunctor $F^f$, which is moreover (fiberwise) accessible.

Next, for any simplicial set $K$ we have a further fibred endofunctor $F^f \otimes K$, which is again accessible.
We denote the free fibred monad it generates by $\dT^{f,K}$.
Finally, for any morphism $i:K\to L$ of simplicial sets, we have an induced map $F^f \otimes K \to F^f \otimes L$, which generates a map of fibred monads $\dT^{f,K}\to \dT^{f,L}$.

\begin{defn}
  Let \sM be an excellent model category.
  \begin{itemize}
  \item A \textbf{monad cell} is a fibred monad morphism $\dT^{f,K} \to \dT^{f,L}$ obtained as above from a fibration $f:B\to A$ of fibrant objects in \sM and a \emph{cofibration} $i:K\to L$ of simplicial sets.
  \item A \textbf{(finite) relative cell monad} is a fibred monad morphism obtained as a (finite) composite of pushouts of monad cells.
  \item A \textbf{(finite) cell monad} is a fibred monad \dT such that the unique map from the initial fibred monad (which is the identity $\dId$) is a (finite) relative cell monad.
  \end{itemize}
\end{defn}

As in previous sections, the proof of weak stability for typal initial \dT-algebras will proceed by considering separately the cases when $\sigma$ is a fibration or an acyclic cofibration.
For the fibration case, the relevant lemma will be the following.

\begin{lem}\label{thm:fibmnd-radj}
  For any fibred monad \dT on a good model category \sM, and any morphism $\sigma:\Delta\to\Gamma$, the adjunction $\sigma^*: \sM/\Gamma \toot \sM/\Delta : \sigma_*$ lifts to an adjunction $\sigma^*:\dT_\Gamma\alg \to \dT_\Delta\alg: \sigma_*$.
\end{lem}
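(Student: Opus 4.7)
The plan is to apply the formal theory of mates for monads. A fibred monad on $\sM$ amounts to an indexed family of monads $\dT_\Gamma$ together with strong (invertible) monad morphisms $\phi_\sigma:\sigma^*\dT_\Gamma\toiso\dT_\Delta\sigma^*$ compatible with units and multiplications; this is the only structure we shall use, so the argument is entirely formal.

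\emph{Lift of $\sigma^*$.} Given a $\dT_\Gamma$-algebra $(X,\alpha)$, equip $\sigma^* X$ with the structure map $\sigma^*\alpha\circ\phi_X^{-1}:\dT_\Delta\sigma^*X\to\sigma^*X$. Compatibility of $\phi$ with unit and multiplication immediately yields the algebra axioms, and functoriality on morphisms is automatic.

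\emph{Lift of $\sigma_*$.} Transpose $\phi$ across $\sigma^*\adj\sigma_*$: writing $\eta',\varepsilon'$ for the unit and counit of the base adjunction, form the mate
\[ \psi \,:\, \dT_\Gamma\sigma_* \xto{\eta'} \sigma_*\sigma^*\dT_\Gamma\sigma_* \xto{\sigma_*\phi_{\sigma_*}} \sigma_*\dT_\Delta\sigma^*\sigma_* \xto{\sigma_*\dT_\Delta\varepsilon'} \sigma_*\dT_\Delta. \]
A standard mate calculation shows that $\psi$ is a lax monad morphism in the opposite direction to $\phi$; this is the classical fact that the right adjoint of a strong monad morphism is naturally a lax one. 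Then for a $\dT_\Delta$-algebra $(Y,\beta)$, define $\sigma_*(Y,\beta)$ to be $\sigma_* Y$ with structure map $\sigma_*\beta\circ\psi_Y$; the lax-monad-morphism axioms for $\psi$ are exactly what make this a $\dT_\Gamma$-algebra.

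\emph{The adjunction.} I would verify that $\eta'_X$ and $\varepsilon'_Y$ are algebra morphisms when $X$ and $Y$ carry the lifted structures; via the defining equation for $\psi$ as a mate of $\phi$, each reduces to naturality of $\phi$ together with one triangle identity and is a short diagram chase. The triangle identities in $\dT_\Gamma\alg$ and $\dT_\Delta\alg$ then follow from those of the base adjunction because the forgetful functors to $\sM/\Gamma$ and $\sM/\Delta$ are faithful. The only step with real content is the mate calculation showing $\psi$ is a lax monad morphism; this is entirely formal and requires no model-categorical input, so no substantial obstacle arises.
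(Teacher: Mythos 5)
Your proof is correct and is essentially the paper's argument with the black box opened: the paper simply observes that $\sigma^*$ is a strong monad morphism and invokes Kelly's doctrinal adjunction to lift the adjunction to categories of algebras, which is exactly the mate calculation you carry out by hand (the mate $\psi$ of $\phi$ being a lax monad morphism, hence inducing the lifted $\sigma_*$).
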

\begin{proof}
  Since $\sigma^*$ is a strong monad morphism, by doctrinal adjunction~\cite{kelly:doc-adjn} the entire adjunction $\sigma^*\dashv\sigma_*$ lifts to the 2-category of categories-with-monads and lax monad morphisms.
  But the construction of categories of algebras is functorial on this 2-category, hence takes this adjunction to the desired adjunction between categories of algebras.
\end{proof}

For the acyclic cofibration case, the relevant lemma will be the following analogue of \cref{thm:W-cofmnd}.

\begin{lem}\label{thm:cell-cofmnd}
  Let $\dS\to\dT$ be a relative cell monad on an excellent model category, and suppose given a pair of pullback squares:
  \[
  \begin{tikzcd}
    C \ar[r,cof,acyc',"i"] \ar[d,fib,"p",swap] \ar[dr,phantom,near start,"\lrcorner"] & D \ar[d,fib,"q"] \\
    V \ar[r,cof,acyc',"j"] \ar[d,fib] \ar[dr,phantom,near start,"\lrcorner"] & W \ar[d,fib] \\
    \Delta \ar[r,cof,acyc',"\sigma"] & \Gamma
  \end{tikzcd}
  \]
  in which all objects are fibrant, the downward-pointing arrows are fibrations, and the rightward-pointing arrows are acyclic cofibrations.
  Suppose moreover that:
  \begin{enumerate}
  \item $C$ and $V$ are $\dT_\Delta$-algebras and $p$ is a $\dT_\Delta$-morphism,
  \item $W$ is a $\dT_\Gamma$-algebra and $j$ is a cartesian morphism in $\dT\alg$,
  \item $D$ is an $\dS_\Gamma$-algebra and $q$ is an $\dS_\Gamma$-morphism (where $W$ has its induced $\dS_\Gamma$-structure), and
  \item $i$ is a cartesian morphism in $\dS\alg$ (where $C$ has its induced $\dS_\Delta$-structure).
  \end{enumerate}
  Then there is a $\dT_\Gamma$-structure on $D$ such that $q$ is a $\dT_\Gamma$-morphism and $i$ is a cartesian morphism in $\dT\alg$.
\end{lem}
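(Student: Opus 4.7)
The plan is to proceed by transfinite induction on the cell presentation of $\dS \to \dT$ as an iterated composite of pushouts of monad cells $\dT^{f_\alpha, K_\alpha} \to \dT^{f_\alpha, L_\alpha}$. The base case $\dS = \dT$ is tautological, and limit stages assemble by the fact that algebras for a pushout or colimit of monads form the evident pullback or limit of the algebra categories. All of the work therefore lies in the successor step: assuming $\dT = \dT' \cup_{\dT^{f,K}} \dT^{f,L}$ and that the $\dT'_\Gamma$-structure on $D$ with the required properties has already been constructed, I extend across one further monad cell.

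By the universal property of a monad pushout, extending the $\dT'_\Gamma$-structure on $D$ to a $\dT_\Gamma$-structure amounts to extending the induced $\dT^{f,K}_\Gamma$-algebra structure on $D$ across $\dT^{f,K} \to \dT^{f,L}$. Since $\dT^{f,K}$ and $\dT^{f,L}$ are algebraically-free, such structures are exactly maps $F^f_\Gamma(D) \otimes K \to D$ and $F^f_\Gamma(D) \otimes L \to D$ in $\sM/\Gamma$. The demands that $q$ be a $\dT_\Gamma$-morphism and that $i$ be cartesian in $\dT\alg$ determine the desired new structure map on the pushout
\[
P = \bigl(F^f_\Delta(C) \otimes L\bigr) \cup_{F^f_\Delta(C) \otimes K} \bigl(F^f_\Gamma(D) \otimes K\bigr),
\]
namely via $i$ and the given $\dT^{f,L}_\Delta$-structure on $C$ on the first summand, and via the inductively constructed $\dT^{f,K}_\Gamma$-structure on $D$ on the second. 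The problem thus reduces to filling in the dashed arrow in
\[
\begin{tikzcd}
P \ar[r] \ar[d] & D \ar[d,fib,"q"] \\
F^f_\Gamma(D) \otimes L \ar[r] \ar[ur,dashed] & W
\end{tikzcd}
\]
against the fibration $q$, with bottom map $w \circ (F^f_\Gamma(q) \otimes L)$ where $w$ is the $L$-part of $W$'s structure; compatibility of the top and bottom maps on $P$ is an inductive consequence of the hypothesis that $p$ and $j$ are $\dT$-morphisms.

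Such a lift exists as soon as the left vertical $P \to F^f_\Gamma(D) \otimes L$ is an acyclic cofibration. But this map is precisely the pushout-product in $\sM/\Gamma$ of the canonical map $F^f_\Delta(C) \to F^f_\Gamma(D)$ with the cofibration $K \to L$ of simplicial sets, so by the simplicial pushout-product axiom it is enough to show that $F^f_\Delta(C) \to F^f_\Gamma(D)$ is itself an acyclic cofibration.

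This last claim is the main obstacle, but it is a direct generalization of the Beck-Chevalley calculation in \cref{thm:W-cofmnd}. Since $F^f$ is a fibred endofunctor, the coherence isomorphism $\sigma^* F^f_\Gamma(D) \cong F^f_\Delta(\sigma^* D) = F^f_\Delta(C)$ identifies $F^f_\Delta(C) \to F^f_\Gamma(D)$ with the pullback of $\sigma$ along $F^f_\Gamma(D) \to \Gamma$. The latter is a fibration, being built via pullback, dependent product along a fibration, and composition with a fibration projection, all of which preserve fibrations in a good model category. Right properness, together with stability of cofibrations under pullback, then exhibits the pullback of the acyclic cofibration $\sigma$ as an acyclic cofibration, completing the successor step and thus the induction.
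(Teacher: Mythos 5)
Your proof is correct and follows essentially the same route as the paper's: reduce to a single monad cell by (transfinite) induction on the cell presentation, turn the extension problem into a lifting problem against $q$ whose left leg is the pushout-product of $F^f_\Delta(C)\to F^f_\Gamma(D)$ with the simplicial cofibration $K\to L$, and verify that $F^f_\Delta(C)\to F^f_\Gamma(D)$ is an acyclic cofibration by identifying it (via the Beck--Chevalley/indexedness isomorphism) with a pullback of $\sigma$ along a fibration. The only cosmetic difference is that the paper delegates this last verification to \cref{thm:W-cofmnd} rather than re-deriving it, but the computation is the same.
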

\begin{proof}
  It is easy to check that this property is preserved by algebraic pushouts and (possibly transfinite) composites of monad morphisms.
  Thus, it suffices to show that monad cells $\dT^{f,K} \to \dT^{f,L}$ have this property.
  In this case, generalizing the argument of \cref{thm:W-cofmnd}, we have the following commutative diagram of solid arrows and we want to construct a dashed arrow making the other squares commute:
  \[ \mathclap{
  \begin{tikzcd}[ampersand replacement=\&]
    (\sigma^*f)_* (\sigma^*B)^* C \otimes K \ar[rrr] \ar[dr] \ar[ddd] \&\&\& f_* B^* D \otimes K \ar[dr] \ar[ddd] \ar[ddrr,bend left]\\
    \&(\sigma^*f)_* (\sigma^*B)^* C \otimes L \ar[rrr,crossing over] \ar[dr] \&\&\& f_* B^* D \otimes L \ar[dr,dashed] \ar[ddd]\\
    \&\& C \ar[rrr,->,near end,crossing over] \&\&\& D \ar[ddd,fib] \\
    (\sigma^*f)_* (\sigma^*B)^* V \otimes K \ar[rrr] \ar[dr] \&\&\& f_* B^* W \otimes K \ar[dr] \\
    \&(\sigma^*f)_* (\sigma^*B)^* V \otimes L \ar[rrr] \ar[dr] \ar[from=uuu,crossing over] \&\&\& f_* B^* W \otimes L \ar[dr] \\
    \&\& V \ar[rrr,->] \ar[from=uuu,crossing over] \&\&\& W
  \end{tikzcd}
  } \]
  This is equivalent to solving the following lifting problem:
  \[
  \begin{tikzcd}
    ((\sigma^*f)_* (\sigma^*B)^* C \otimes L) \sqcup_{((\sigma^*f)_* (\sigma^*B)^* C \otimes K)} (f_* B^* D \otimes K) \ar[r] \ar[d] & D \ar[d,fib] \\
    f_* B^* D \otimes L \ar[ur,dashed] \ar[r] & W
  \end{tikzcd}
  \]
  But the left-hand map is now the pushout product of $(\sigma^*f)_* (\sigma^*B)^* C \otimes L \to f_* B^* D$, which we showed in \cref{thm:W-cofmnd} to be an acyclic cofibration, and the cofibration $K\to L$ of simplicial sets; thus it is also an acyclic cofibration.
\end{proof}

\begin{thm}\label{thm:cell-wkstab}
  If \dT is a cell monad on an excellent model category \sM, then \fibmf has weakly stable typal initial $\dT\algf$-algebras.
\end{thm}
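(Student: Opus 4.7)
The plan is to follow the pattern of the constructions for natural numbers, \W-types, and propositional truncation, mixing the cell monad $\dT$ with the algebraic fibrant-replacement monad at each base. For each $\Gamma \in \Mf$, I would take the comprehension of the desired $H \in \cF(\Gamma)$ to be $\Gamma.H := (\dT_\Gamma + \dR_\Gamma)(\emptyset_\Gamma) \to \Gamma$, where $\dR_\Gamma$ is the restriction of $\dR$ to $\sM/\Gamma$ and $+$ denotes the algebraic coproduct of accessible monads. A $(\dT_\Gamma + \dR_\Gamma)$-algebra on an object of $\sM/\Gamma$ is exactly a $\dT_\Gamma$-structure together with an (independent) $\dR_\Gamma$-structure, so $\Gamma.H$ is fibrant over $\Gamma$ and carries a canonical $\dT_\Gamma$-structure, supplying a $\dT\algf$-structure on $\Gamma.H \fib \Gamma$.

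For typal initiality, given a fibration $p : \Gamma.H.C \fib \Gamma.H$ with a $\dT_\Gamma$-algebra structure making $p$ a $\dT_\Gamma$-morphism, I would choose any $\dR$-algebra structure on $p$ and compose with the $\dR$-structure of $\Gamma.H \fib \Gamma$ to equip $\Gamma.H.C$ with an $\dR_\Gamma$-structure for which $p$ is an $\dR_\Gamma$-morphism. Together with its $\dT_\Gamma$-structure, this makes $\Gamma.H.C$ into a $(\dT_\Gamma + \dR_\Gamma)$-algebra and $p$ into a morphism thereof, so by initiality of $\Gamma.H$ the projection $p$ has a $(\dT_\Gamma + \dR_\Gamma)$-algebra section, which is in particular the required $\dT_\Gamma$-morphism section.

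For weak stability along $\sigma : \Delta \to \Gamma$, I would factor $\sigma$ as an acyclic cofibration followed by a fibration and treat each case separately. If $\sigma$ is a fibration, I apply the right adjoint $\sigma_*$, which by \cref{thm:fibmnd-radj} lifts to $\dT$-algebras and sends the given $\dT_\Delta$-morphism $C \fib \sigma^* \Gamma.H$ to a $\dT_\Gamma$-morphism $\sigma_* C \to \sigma_* \sigma^* \Gamma.H$; pulling back along the (automatically $\dT_\Gamma$-linear) unit $\eta : \Gamma.H \to \sigma_* \sigma^* \Gamma.H$ yields a fibration of $\dT_\Gamma$-algebras $\eta^* \sigma_* C \fib \Gamma.H$, to which the preceding step assigns a $\dT_\Gamma$-morphism section, and transposition across $\sigma^* \adj \sigma_*$ gives the required section of $C$.

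The acyclic cofibration case is the main obstacle and is where the cell-monad hypothesis is essential. Here $\sigma^* \Gamma.H \to \Gamma.H$ is an acyclic cofibration between fibrant objects (since $\Delta$ is fibrant), hence admits a retraction $r : \Gamma.H \to \sigma^* \Gamma.H$. Given $C \fib \sigma^* \Gamma.H$ as above I form $r^* C \fib \Gamma.H$ and apply \cref{thm:cell-cofmnd} with $\dS = \dId$ --- using that $\dId \to \dT$ is a relative cell monad by the cell-monad hypothesis, and that $\sigma^* \Gamma.H \to \Gamma.H$ is cartesian in $\dT\alg$ by fibredness of $\dT$ --- to endow $r^* C$ with a $\dT_\Gamma$-structure making $r^* C \fib \Gamma.H$ a $\dT_\Gamma$-morphism and $C \to r^* C$ cartesian in $\dT\alg$. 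The earlier typal-initiality step then produces a $\dT_\Gamma$-morphism section of $r^* C$ over $\Gamma.H$, which restricts along $\sigma^* \Gamma.H \to \Gamma.H$ to the desired section of $C \cong \sigma^* r^* C$. The pushout-product ingredient inside \cref{thm:cell-cofmnd} is precisely what makes the cofibrant cellular structure of $\dT$ indispensable: a general accessible fibred monad would not suffice.
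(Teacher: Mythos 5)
Your proposal is correct and follows essentially the same route as the paper's proof: the initial $(\dT_\Gamma+\dR_\Gamma)$-algebra for the construction and elimination, the adjunction of \cref{thm:fibmnd-radj} with the unit-pullback transposition argument for the fibration case, and the retraction plus \cref{thm:cell-cofmnd} (applied to $\dId\to\dT$) for the acyclic cofibration case. No substantive differences.
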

\begin{proof}
  As usual, for any $\Gamma$, let $H\fib\Gamma$ be the initial $(\dT_\Gamma+\dR_\Gamma)$-algebra.
  Then it is a fibration and a $\dT_\Gamma$-algebra.
  And given any fibration $C\fib H$ that is a $\dT_\Gamma$-morphism, choose an \dR-structure on it to make $C\to\Gamma$ a $(\dT_\Gamma+\dR_\Gamma)$-algebra and $C\to H$ a $(\dT_\Gamma+\dR_\Gamma)$-morphism, so that it has a $(\dT_\Gamma+\dR_\Gamma)$-section.
  Thus, $H$ is a typal initial $\dT\algf$-algebra.

  To show that it is weakly stable, by factorization we consider separately the cases when $\sigma:\Delta\to\Gamma$ is a fibration or an acyclic cofibration.
  If $\sigma$ is a fibration, then for any $\dT_\Delta$-algebra fibration $C\fib \sigma^*H$, by \cref{thm:fibmnd-radj} we have a $\dT_\Gamma$-algebra map $\sigma_*C\fib \sigma_*\sigma^*H$, which is a fibration since $\sigma$ is a fibration.
  Thus, pulling it back along the unit $H\to \sigma_*\sigma^* H$, we obtain a $\dT_\Gamma$-algebra fibration over $H$, which therefore has a $\dT_\Gamma$-algebra section.
  This gives a map $H\to \sigma_* C$, whose transpose under the adjunction of \cref{thm:fibmnd-radj} is the desired $\dT_\Delta$-algebra section $\sigma^*H \to C$.

  If $\sigma$ is an acyclic cofibration, then so is its pullback $H^*(\sigma) : \sigma^*H \to H$ along the fibration $H\fib \Gamma$.
  Since $\Delta$ is fibrant, so is $\sigma^*H$, and thus $H^*(\sigma)$ has a retraction $r$.
  Now for any $\dT_\Delta$-algebra fibration $C\fib \sigma^*H$, let $D = r^*C$, so that $C \cong (H^*(\sigma))^*(D)$.
  By \cref{thm:cell-cofmnd} applied to the relative cell monad $\dId\to\dT$, we can find a $\dT_\Gamma$-algebra structure on $D$ making $D\to H$ a $\dT_\Gamma$-algebra fibration whose pullback is $C\to\sigma^* H$.
  Thus, applying the eliminator for $H$ we get a $\dT_\Gamma$-algebra section of $D$, which pulls back to a $\dT_\Delta$-algebra section of $C$.
\end{proof}

Finally, we also have:

\begin{thm}\label{thm:cell-replift}
  If \dT is a cell monad on an excellent model category \sM, then $\dT\algf$ has representable lifts.
\end{thm}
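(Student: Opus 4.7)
The plan is to induct on the cell-monad construction of $\dT$, starting from $\dT = \dId$ and building up via pushouts of monad cells $\dT^{f,K} \to \dT^{f,L}$ together with transfinite composites.  The base case $\dT = \dId$ is immediate: every morphism is automatically a $\dId$-algebra morphism, so lifts are unique and $V_{\sA,B} = \Gamma$ (with $\varpi = 1_\Gamma$) represents the lift functor.

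The key ingredient is establishing representable lifts for each free monad $\dT^{f,L}$, which serves as a building block for the cell-monad pushouts.  Given $\Gamma \in \Mf$, $A \in \cF(\Gamma)$, $B \in \cF(\Gamma.A)$, and a $\dT^{f,L}$-structure $\sA$ on $A$, since $\dT^{f,L}$ is algebraically free on $F^f \otimes L$, the structure $\sA$ is equivalent to an action $a \colon E \to \Gamma.A$ where $E := (F^f \otimes L)(\Gamma.A)$.  A lift of $\sA$ to $B$ amounts to a map $b \colon E' \to \Gamma.A.B$, with $E' := (F^f \otimes L)(\Gamma.A.B)$, making the obvious square commute over $a$.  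Forming $P := E \times_{\Gamma.A} \Gamma.A.B \to E$ and then $W := E' \times_E P \to E'$ (both fibrations by stability of fibrations under pullback), such lifts are in natural bijection with sections of $W \to E'$.  Local cartesian closure of $\sM$ then provides $V_{\sA,B} := \pi_*(W) \to \Gamma$ for $\pi \colon E' \to \Gamma$, which represents the lift functor via the universal property of dependent products and is stable under base change by Beck--Chevalley.

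To propagate representability along the cell-monad construction, for any pushout of fibred monads
\[
\begin{tikzcd}
\dT^{f,K} \ar[r] \ar[d] & \dT^{f,L} \ar[d] \\
\dS \ar[r] & \dT
\end{tikzcd}
\]
a $\dT$-lift of $\sA$ to $B$ is precisely a compatible pair consisting of an $\dS$-lift and a $\dT^{f,L}$-lift whose underlying $\dT^{f,K}$-lifts agree.  Hence the functor of $\dT$-lifts is a pullback of the three corresponding representable functors, and this pullback is represented by the pullback of the representing objects in $\Mf/\Gamma$.  Transfinite composites along the cell-monad tower are handled analogously: a $\dT$-lift for the colimit monad is a compatible system of intermediate lifts, so the lift functor is a limit of representables and hence representable in the excellent (locally presentable) setting.

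The main obstacle will be ensuring that $V_{\sA,B}$ is genuinely fibrant, as required for it to lie in $\Mf$.  While $Y := F^f(\Gamma.A.B) \to \Gamma$ is a fibration (as a polynomial endofunctor applied to a fibration between fibrant objects), the simplicial copower $E' = Y \otimes L$ generally does not project to $\Gamma$ via a fibration.  However, $\pi$ factors as $Y \otimes L \to Y \to \Gamma$, and the pushforward along the copower projection can be computed via the simplicial-power adjunction $(\blank) \otimes L \dashv (\blank)^L$ in the slice.  Combined with the stability of copowers under pullback (assumption~\ref{item:m3}) and the SM7 pushout-product axiom, this should yield fibrancy of $V_{\sA,B} \to \Gamma$.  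Propagating this fibrancy analysis through the pushouts and transfinite composites, with appropriate care about forming pullbacks within $\Mf/\Gamma$, is where the bulk of the technical work lies.
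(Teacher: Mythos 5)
Your overall skeleton matches the paper's: induct over the cell presentation, handle pushout cells by observing that the lift functor is a pullback of lift functors, handle transfinite composites by inverse limits, and build the elementary representing objects from dependent products/powers. But the proof has a genuine gap exactly where you park it as ``where the bulk of the technical work lies'': you must show that the representing object is a \emph{fibration over $\Gamma$}, and this is not a routine afterthought but the actual content of the theorem. The statement concerns $\dT\algf$ over the comprehension category $\fibmf$, whose base is $\Mf$; representability is of a functor on $(\Mf/\Gamma)\op$, so a representing object that is not fibrant is not even an object of the relevant category, and a non-fibrant representative in $\sM/\Gamma$ does not restrict to a representable functor on $\Mf/\Gamma$. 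Your candidate $\pi_*(W)$ is formed by pushing forward along $\pi\colon (F^f\otimes L)(\Gamma.A.B)\to\Gamma$, which, as you note, is not a fibration because of the copower $\otimes L$; so there is no reason for $\pi_*(W)$ to be fibrant, and the subsequent pullbacks and inverse limits of such objects inherit the problem. The paper avoids this by never taking a dependent product along a copower: it rewrites the extension problem via the power/copower adjunction and exhibits the representing object as a pullback of $\bigl(B^{F^f_\Gamma(B)}_\Gamma\bigr)^L_\Gamma$ along the pullback-corner map of the cofibration $K\to L$ against the fibration $B^{F^f_\Gamma(B)}_\Gamma\to A^{F^f_\Gamma(B)}_\Gamma$; SM7 makes that corner map a fibration, hence $V\to\Gamma$ is a fibration. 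This is also the only place the \emph{cell} hypothesis (that $K\to L$ is a cofibration of simplicial sets) is used, and your absolute-lift computation for a single free monad $\dT^{f,L}$ never sees it.

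A related organizational point: the paper strengthens the induction hypothesis to \emph{relative} cell monads $\dS\to\dT$, representing extensions of a given $\dS$-structure to a $\dT$-structure by a \emph{fibration} $V\fib\Gamma$. That strengthening is what makes the pushout and transfinite steps close up: for a pushout cell the relative representing object is literally the one for $\dT^{f,K}\to\dT^{f,L}$, and for a transfinite composite one takes an inverse composite of representing fibrations, which is again a fibration. Your version — pulling back three absolute representing objects $V_{\dS}\times_{V_{\dT^{f,K}}}V_{\dT^{f,L}}$ and taking limits of representables — identifies the correct functors, but to conclude fibrancy of the pullback and of the transfinite limit you would still need the comparison map $V_{\dT^{f,L}}\to V_{\dT^{f,K}}$ (restriction along $K\to L$) to be a fibration, which is the same pullback-corner argument. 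So the missing step is not optional bookkeeping: until it is supplied, the proof does not establish the theorem as stated.
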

\begin{proof}
  To make the induction go through, we will prove the following stronger result.
  Let $\dS\to\dT$ be a \emph{relative} cell monad and let $\Gamma$ be a fibrant object.
  For any fibrations $B\fib A\fib \Gamma$ such that $A\fib \Gamma$ is a $\dT_\Gamma$-algebra and $B\fib\Gamma$ is an $\dS_\Gamma$-algebra and $B\to A$ is an $\dS_\Gamma$-algebra map, we will show that there is a representing object $V$ for extensions of the \dS-algebra structure on $B$ to a \dT-algebra structure making $B\to A$ into a \dT-algebra map, and that moreover the map $V\to\Gamma$ is a fibration.

  In particular, this property implies that $V$ is fibrant, as it must be since we are considering $\dT\algf$ rather than $\dT\alg$.
  But the stronger assumption that $V\to\Gamma$ is a fibration ensures that the property is preserved by both algebraic pushouts and (possibly transfinite) composites of monad morphisms.
  In the latter case we take a (possible transfinite) inverse composite of the representing fibrations.

  Thus, it suffices to show that the property holds for monad cells $\dT^{f,K} \to \dT^{f,L}$.
  In other words, we must show that if $A\fib \Gamma$ is an $(F^f\otimes L)$-algebra and $B\fib\Gamma$ is an $(F^f\otimes K)$-algebra and $B\to A$ is a $(F^f\otimes K)$-algebra map, there is a representing object for extensions of the $(F^f\otimes K)$-algebra structure on $B$ to a $(F^f\otimes L)$-algebra structure making $B\to A$ into a $(F^f\otimes L)$-algebra map.

  Such an extension is a dotted arrow filling in the following diagram:
  \[
  \begin{tikzcd}
    F^f_\Gamma(B) \otimes K \ar[rr] \ar[dr] \ar[dd] && B \ar[dd,fib]\\
    & F^f_\Gamma(B) \otimes L \ar[ur,dashed] \\
    F^f_\Gamma(A) \otimes K \ar[rr] \ar[dr] && A\\
    & F^f_\Gamma(A) \otimes L \ar[ur] \ar[from=uu,crossing over]
  \end{tikzcd}
  \]
  We define the representing object to be the following pullback:
  \[
  \begin{tikzcd}[column sep=huge]
    V \ar[r] \ar[d,fib] \ar[dr,phantom,near start,"\lrcorner"] & \left(B^{F^f_\Gamma(B)}_\Gamma\right)^L_\Gamma \ar[d,fib]\\
    \Gamma \ar[r] &
    \left(B^{F^f_\Gamma(B)}_\Gamma\right)^K_\Gamma \times_{\left(A^{F^f_\Gamma(B)}_\Gamma\right)^K_\Gamma} \left(A^{F^f_\Gamma(B)}_\Gamma\right)^L_\Gamma
  \end{tikzcd}
  \]
  Here $B^{F^f_\Gamma(B)}_\Gamma$ and $A^{F^f_\Gamma(B)}_\Gamma$ denote exponentials in $\sM/\Gamma$, while $(-)^K_\Gamma$ and $(-)^L_\Gamma$ denote simplicial powers in $\sM/\Gamma$.
  Of course, $(X^Y_\Gamma)^K_\Gamma \cong X^{Y\otimes K}_\Gamma$, but we have written it using the former notation to make it clear that the right-hand map is the pullback corner map for the cofibration $K\to L$ and the map $B^{F^f_\Gamma(B)}_\Gamma \to A^{F^f_\Gamma(B)}_\Gamma$, which is a fibration since $B\fib A$ and $F^f_\Gamma(B) \fib \Gamma$ are.
  Hence the right-hand map is a fibration, and thus so is its pullback on the left.

  The bottom map is determined by the solid arrows in the previous diagram, and a choice of dotted arrow in the previous diagram would be equivalently a lift of this map to $\left(B^{F^f_\Gamma(B)}_\Gamma\right)^L_\Gamma$, and hence also equivalently a section of $V\fib\Gamma$.
  Moreover, the entire diagram lives in $\sM/\Gamma$ and is preserved by pullback; thus a choice of such a lift after pullback along $\sigma$ is the same as a factorization of $\sigma$ through $V$.
  This says exactly that $V$ has the desired universal property.
\end{proof}

\begin{cor}
  If \sM is an excellent model category and \dT is a cell monad on it, then $\fibmfbang$ has strictly stable typal initial $\dT\algf$-algebras.\qed
\end{cor}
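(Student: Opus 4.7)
The plan is straightforward: the corollary is essentially an immediate consequence of combining the two preceding theorems with the general local universes lemma about typal initial \S-algebras established earlier in the section. First I would identify the three ingredients: \cref{thm:cell-wkstab} supplies the weakly stable typal initial $\dT\algf$-algebras for $\fibmf$; \cref{thm:cell-replift} supplies the representable lifts for $\dT\algf$; and the general coherence lemma (the unnamed lemma preceding \cref{defn:replift}) says that for any comprehension category $(\C,\T)$ with a terminal object, any fibred category of structures $\S$ with representable lifts, and any weakly stable typal initial $\S$-algebras, the left adjoint splitting $\C_!$ has strictly stable typal initial $\S$-algebras.

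Next I would verify the remaining hypothesis, namely that $\fibmf$ has a terminal object. Since $\sM$ is a good model category, its terminal object $1$ is fibrant (it is a retract of any fibrant object, or simply by definition the limit of the empty diagram, which lies in $\Mf$), so $1 \in \Mf$ serves as the terminal object of $\fibmf$. Moreover, $\fibmf$ satisfies~\eqref{eq:lf} by the lemma just before \cref{thm:nat-stable}, which is stronger than needed.

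Having checked all hypotheses, I would apply the general lemma to $(\C,\T) = \fibmf$ and $\S = \dT\algf$, concluding that $\fibmfbang$ has strictly stable typal initial $\dT\algf$-algebras. No main obstacle is expected: the substantive work was already done in establishing weak stability (\cref{thm:cell-wkstab}) and representability of lifts (\cref{thm:cell-replift}), both of which required the cofibrancy of \dT. The present corollary is purely a matter of assembling those pieces through the local universes machinery.
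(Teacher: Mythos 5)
Your proposal is correct and matches the paper's intended argument exactly: the corollary is stated with an immediate \qed, being precisely the combination of \cref{thm:cell-wkstab}, \cref{thm:cell-replift}, and the earlier local-universes lemma for fibred categories of structures. Your additional check that $\Mf$ has a terminal object (the terminal object of $\sM$ is fibrant) is the only hypothesis the paper leaves tacit, and you verify it correctly.
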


We refrain from discussing examples until after the generalization to monads with parameters in the next section.

\begin{rmk}
  There is a natural generalization of a cell monad that allows cells generated by endofunctor morphisms $F^f\times K \to F^f \times L$ for any fibration $f$ and any cofibration $K\to L$ in \sM, rather than requiring $K\to L$ to be a cofibration of simplicial sets.
  (This is more general since $1\otimes K \to 1\otimes L$ is a cofibration in \sM for any cofibration $K\to L$ of simplicial sets, and $X\times (1\otimes K) \cong X\otimes K$.)
  The above proofs go through for this generalization as long as the cofibration $K\to L$ satisfies the pushout-product property in \sM, such as if \sM is itself a cartesian monoidal model category.
  Similarly, we could replace simplicial sets by another suitable monoidal model category over which \sM is enriched.
\end{rmk}

\section{Cell monads with parameters}
\label{sec:cell-monads-param}

Now we generalize the construction of \cref{sec:cell-monads} to allow parameters, i.e.\ premises for the formation and introduction rules.
Because our type theory has no universes, we have to treat type parameters separately from term parameters.
Intuitively, a ``parameter scheme'' is a list of hypothesized judgments in type theory; but in this section we take a semantic point of view instead.

Recall that in a comprehension category $(\C,\T)$, a \textbf{dependent projection} is a map that is isomorphic to the comprehension of some (unspecified) type $\Gamma.A\to\Gamma$, and a \textbf{display map} is a finite composite of dependent projections.
All pullbacks of dependent projections and display maps exist and are again of the same sort.
We view a display map as a semantic version of a \emph{context extension}.
Let $\D_1$ and $\D$ be the categories whose objects are dependent projections and display maps, respectively, and whose morphisms are pullback squares; then the codomain projections $\D_1\to\C$ and $\D\to\C$ are fibrations with groupoid fibers.

Let $\T_{\cong}$ be the subcategory of $\T$ containing only the cartesian arrows; then $\T_{\cong}\to\C$ is also a fibration with groupoid fibers.
Finally, let $\D_{1,*}$ be the category whose objects are dependent projections equipped with a section, and whose morphisms are cartesian ones commuting with the sections.
The forgetful functor $\D_{1,*} \to \D_1$ is a discrete fibration.

\begin{defn}\label{defn:param-scheme}
  Let $(\C,\T)$ be a comprehension category.
  We define the set of \textbf{parameter schemes over $(\C,\T)$} together with, for every such parameter scheme $\P$, a fibration $\cInst(\P)\to\C$ of \textbf{instantiations of \P}, mutually inductively as follows.
  \begin{itemize}
  \item There is an \textbf{empty parameter scheme} $\emptyps$, and $\cInst(\emptyps) = \C$ (i.e.\ there is a unique instantiation of $\emptyps$ over every $\Gamma\in\C$).
  \item If \P is a parameter scheme, an \textbf{extension of \P by a type parameter} is a new parameter scheme $\typeps{\P}{\al}$ determined by a cartesian functor $\al:\cInst(\P)\to\D$ over \C.
    Its instantiations are determined by the following pullback:
    \[
    \begin{tikzcd}
      \cInst(\typeps{\P}{\al}) \ar[rr] \ar[d] \pullback{dr} && \T_{\cong} \ar[d,"\mathrm{cod}"] \\
      \cInst(\P) \ar[r,"\al"] \ar[d] & \D \ar[r,"\mathrm{dom}"] & \C\\
      \C
    \end{tikzcd}
    \]
  \item If \P is a parameter scheme, an \textbf{extension of \P by a term parameter} is a new parameter scheme $\termps\P\al\be$ determined by a cartesian functor $\al:\cInst(\P)\to\D$ over \C and a cartesian functor $\be:\cInst(\P) \to \D_1$ over $\cInst(\P) \xto{\al} \D \xto{\mathrm{dom}} \C$.
    Its instantiations are determined by the following pullback:
    \[
    \begin{tikzcd}
      \cInst(\termps{\P}{\al}{\be}) \ar[rr] \ar[d] \pullback{dr} & & \D_{1,*} \ar[d] \\
      \cInst(\P) \ar[dr,"\al"] \ar[d] \ar[rr,"\be"] & \phantom{.} & \D_1 \ar[d,"\mathrm{cod}"]\\
      \C & \D \ar[r,"\mathrm{dom}"] & \C
    \end{tikzcd}
    \]
  \end{itemize}
\end{defn}

\begin{rmk}{}
  The above definition could be made more general; as stated it essentially assumes that the parameters are defined using only ``pseudo-stable structure'', but we could in principle allow weakly stable structure as well.
  However, it is hard to think of examples requiring the more general version, so we refrain from introducing the complication.
\end{rmk}

Intuitively, $\al$ assigns the context extension in which a type or term parameter lives, while $\be$ assigns the type to which a term parameter belongs (which can depend on the context extension).
For instance, if a type parameter is just $\Gamma\types B\type$, then \al will assign to every instantiation over $\Gamma$ the terminal display map $1_\Gamma$.
On the other hand, if the parameter scheme \P already contains a type parameter $A$, then the additional type parameter $\Gamma,x:A \types B\type$ will have a functor $\al$ that assigns to each instantiation over $\Gamma$ the comprehension of the corresponding type $A$.

\begin{defn}
  A \textbf{fibred category of structures with parameters} over a comprehension category $(\C,\T)$ consists of a parameter scheme $\P$, a fibration $\S\to \C$, and a faithful amnestic isofibration $\S \to \cInst(\P)\times_C C^\to$ over \C.
\end{defn}

If $\Theta$ is an instantiation of \P over $\Gamma$, the objects of the fiber over $(\Theta,X\to\Gamma)$ will be called \textbf{$\S(\Th)$-structures on $X$}; as before they form a partially ordered set.

\begin{defn}
  Let $(\C,\T)$ be a comprehension category and \S a fibred category of structures with parameters over \C.
  For any instantiation $\Th$ of the parameter scheme of \S over $\Gamma\in \C$, a \textbf{typal initial $\S(\Th)$-algebra} consists of a type $H\in\T(\Gamma)$ and an $\S(\Th)$-structure on $\Gamma.H\to \Gamma$, such that for any $C\in\T(\Gamma.H)$ together with an $\S(\Th)$-structure on the composite $\Gamma.H.C\to\Gamma.H\to\Gamma$ such that $\Gamma.H.C\to\Gamma.H$ is an $\S(\Th)$-morphism, there exists a section $\Gamma.H \to \Gamma.H.C$ of $\Gamma.H.C\to\Gamma.H$ that is also an \S-morphism.

  We say $\C$ has \textbf{weakly stable typal initial \S-algebras} if for any $\Gamma$ and $\Th$ there exists a typal initial $\S(\Th)$-algebra whose reindexing along any $\sigma:\Delta\to\Gamma$ is a typal initial $\S(\Theta[\sigma],\theta[\sigma])$-algebra.
  If $\C$ is split, we say it has \textbf{strictly stable typal initial \S-algebras} if we have an operation assigning to each $\Gamma$ and $\Th$ a typal initial $\S(\Th)$-algebra over $\Gamma$ with specified $\S$-algebra sections, in a way that is strictly preserved by the split reindexing functors.
\end{defn}

\begin{defn}\label{defn:replift-param}
  Let $(\C,\T)$ be a comprehension category and \S a fibred category of structures with parameters over \C.
  Given $A\in\T(\Gamma)$ and $B\in\T(\Gamma.A)$ and an instantiation $\Th$ of the parameter scheme over $\Gamma$, and any $\S(\Th)$-structure $\sA$ on $\Gamma.A\to\Gamma$, an \textbf{\S-lift} of $\sA$ to $B$ is an $\S(\Th)$-structure on the composite $\Gamma.A.B\to\Gamma.A\to\Gamma$ such that $\Gamma.A.B\to\Gamma.A$ becomes an $\S(\Th)$-morphism over $\Gamma$.
  We say that \S has \textbf{representable lifts} if for any $A,B,\Th,\sA$, the functor $(\C/\Gamma)\op \to \mathbf{Set}$ defined by
  \begin{equation*}
    (\sigma:\Delta\to\Gamma) \mapsto \{ \S\text{-lifts of $\sA[\sigma]$ to $B[\sigma]$}\}
  \end{equation*}
  is representable.
  In other words, there exists a map $\varpi:V_{\sA,B}\to\Gamma$ and an $\S$-lift $\sB$ of $\sA[\varpi]$ to $B[\varpi]$, such that for any $\sigma:\Delta\to\Gamma$ and any $\S$-lift $\sB'$ of $\sA[\sigma]$ to $B[\sigma]$, there is a unique map $\tau:\Delta\to V_{\sA,B}$ such that $\varpi \circ \tau = \sigma$ and $\sB' = \sB[\tau]$.
\end{defn}

We are almost ready to prove the local universes coherence theorem, but first we need to address the difference between parameters over $\C$ and parameters over $\C_!$.
Note that the dependent projections and display maps of $\C_!$ are exactly the same as those of \C.

\begin{prob}
  Every parameter scheme \P over $(\C,\T)$ gives rise to a parameter scheme $\P_!$ over $(\C,\T_!)$ together with a cartesian functor $\rho:\cInst(\P_!)\to\cInst(\P)$ over \C.
\end{prob}
\begin{constr}
  By induction on \P.
  Of course, $\emptyps_!=\emptyps$.
  We define $\typeps{\P}{\al}_! = \typeps{\P_!}{\al\circ \rho}$ and $\termps{\P}{\al}{\be}_! = \termps{\P_!}{\al\circ \rho}{\be\circ \rho}$.
  The well-typedness of the latter for $\T_!$ is why we defined $\be$ to take values in $\D_1$ rather than $\T_{\cong}$.
  The maps $\rho$ in the two inductive steps are defined by the following diagrams:
    \[
    \begin{tikzcd}
      \cInst(\typeps{\P}{\al}_!) \ar[dr,"\rho"] \ar[rr] \ar[dd] && (\T_!)_{\cong} \ar[dr] \ar[ddr] \\
      & \cInst(\typeps{\P}{\al}) \ar[rr,crossing over] \ar[d] \pullback{dr} && \T_{\cong} \ar[d,"\mathrm{cod}"] \\
      \cInst(\P_!) \ar[r,"\rho"] \ar[dr] & \cInst(\P) \ar[r,"\al"] \ar[d] & \D \ar[r,"\mathrm{dom}"] & \C\\
      & \C
    \end{tikzcd}
    \]
    \[
    \begin{tikzcd}
      \cInst(\termps{\P}{\al}{\be}_!) \ar[r,"\rho"] \ar[d] & \cInst(\termps{\P}{\al}{\be}) \ar[rr] \ar[d] \pullback{dr} & & \D_{1,*} \ar[d] \\
      \cInst(\P_!) \ar[r,"\rho"] \ar[dr] & \cInst(\P) \ar[dr,"\al"] \ar[d] \ar[rr,"\be"] & \phantom{.} & \D_1 \ar[d,"\mathrm{cod}"]\\
      & \C & \D \ar[r,"\mathrm{dom}"] & \C
    \end{tikzcd}
    \]
\end{constr}

\begin{defn}
  If \S is a fibred category of structures with parameters over $\C$, with parameter scheme \P, we define $\S_!$ to be the fibred category of structures with parameter scheme $\P_!$ over $C_!$ obtained by pullback along $\rho:$
  \[
  \begin{tikzcd}
    \S_! \ar[r] \ar[d] \pullback{dr} & \S \ar[d] \\
    \cInst(\P_!)\times_C \C^\to \ar[r,"\rho\times 1"] & \cInst(\P)\times_C \C^\to
  \end{tikzcd}
  \]
\end{defn}

\begin{thm}\label{thm:lu-str-param}
  Let $(\C,\T)$ be a comprehension category and \S a fibred category of structures with parameters over \C.
  If \C has weakly stable typal initial \S-algebras, \C satisfies~\eqref{eq:lf}, and \S has representable lifts, then $\C_!$ has strictly stable typal initial $\S_!$-algebras.
\end{thm}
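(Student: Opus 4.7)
The plan is to follow the strategy of the preceding Lemma while threading through the parameter scheme $\P$; the essential new ingredient is a ``local universe for instantiations'' that replaces the terminal object used there. Specifically, for each parameter scheme $\P$ equipped with a tuple $\vec V$ of slot universes---one object-plus-type in $\C$ for each type-parameter slot of $\P$---I would construct inductively an object $U_{\P,\vec V} \in \C$ equipped with a universal instantiation $\Theta^\bullet \in \cInst(\P_!)(U_{\P,\vec V})$ whose type parameters use $\vec V$, such that for any $\Gamma$ the assignment $\sigma \mapsto \Theta^\bullet[\sigma]$ is a bijection from $\C(\Gamma, U_{\P,\vec V})$ onto the set of instantiations of $\P_!$ over $\Gamma$ whose type-parameter slots use $\vec V$. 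The inductive steps all rely on~\eqref{eq:lf}:
\begin{itemize}
\item $U_\emptyps = 1$.
\item $U_{\typeps{\P}{\al},\, \vec V \cdot (V,E)}$ is the dependent product along the display map $\al(\Theta^\bullet)$ of the pullback of $V$, so that maps into it classify both an instantiation of $\P_!$ and a classifying map to $V$ from the new context extension.
\item $U_{\termps{\P}{\al}{\be},\, \vec V}$ is the dependent product along $\al(\Theta^\bullet)$ of the display map $\be(\Theta^\bullet)$, classifying an instantiation of $\P_!$ together with a term of the type named by $\be(\Theta^\bullet)$.
\end{itemize}

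Once $U_{\P,\vec V}$ has been built, the remainder of the argument runs in parallel with the non-parametrized local universes proof. Applying weak stability in $\C$ to $\Theta^\bullet$ gives a typal initial $\S(\Theta^\bullet)$-algebra $H^\bullet \in \T(U_{\P,\vec V})$ carrying an $\S$-structure $\sH^\bullet$; the strictly stable formation/introduction datum in $\C_!$ is then defined by, for each $\Theta \in \cInst(\P_!)(\Gamma)$ with slot universes $\vec V$, taking $(V_H, E_H, \name{H}_\Theta) = (U_{\P,\vec V}, H^\bullet, \Theta^\sharp)$, with $\Theta^\sharp : \Gamma \to U_{\P,\vec V}$ the classifying map; strict stability is composition of names. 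For the eliminator, following the triangle-construction from the preceding Lemma, I would form $V := U_{\P,\vec V} \triangleleft V_C$, which classifies $\Theta$ together with a $\T_!$-type $C \in \T_!(\Gamma.H)$ with universe $V_C$, and apply the representable-lifts hypothesis to produce a further local universe $V_{\sH^\bullet[V],\, E_C[c]} \to V$ universal for $\S$-lifts of $H^\bullet$ to $E_C[c]$. The eliminator of the weakly stable initial algebra $H^\bullet$ applied in this universal case reindexes to the strictly stable eliminator with its judgemental computation rule.

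The main obstacle I anticipate is the bookkeeping in the inductive step: verifying that the cartesian functoriality of $\al$ and $\be$ in \cref{defn:param-scheme} is strong enough to make $\Theta^\bullet$ reindex \emph{strictly}, rather than merely up to canonical isomorphism, under arbitrary $\sigma : \Delta \to \Gamma$. This should follow from unwinding the pullback-square definitions of $\cInst(\P_!)$ together with the fact that $\T_!$ was designed precisely to be strictly reindexed, but tracking the compatibilities through the iterated $\Sigma$/$\Pi$ structure of $U_{\P,\vec V}$ is the part of the argument most likely to require care.
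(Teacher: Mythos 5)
Your proposal is correct and follows essentially the same route as the paper: the paper likewise builds the local universe for the formation/introduction data by induction on the parameter scheme, using the terminal object for $\emptyps$, a local exponential $(V_\Th\times V_A\to V_\Th)^{\al(\rho(\Xi))}$ for a type parameter, and the dependent exponential $\al(\rho(\Xi))_*(\be(\rho(\Xi)))$ for a term parameter (all supplied by~\eqref{eq:lf}), together with a universal instantiation $\Xi$ and a classifying map $\name{\Th}$ with $\Th=\Xi[\name{\Th}]$; your $U_{\P,\vec V}$ with its universal $\Theta^\bullet$ is exactly this object, with $\vec V$ recording the slot universes $(V_A,E_A)$ on which the paper's $V_\Th$ actually depends. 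The elimination step via $V_H\triangleleft V_C$ followed by the representable-lifts universe is also identical to the paper's argument.
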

\begin{proof}
  Suppose given $\Gamma\in\C$ and an instantiation $\Th$ over $\Gamma$ of the parameter scheme $\P_!$ of $\S_!$.
  By induction on $\P$, we construct a local universe $V_\Th$ for the formation and introduction rules (i.e.\ the type $H\in\T(\Gamma)$ and its $\S(\Th)$-structure), an instantiation $\Xi\in\cInst(\P_!)_{V_\Th}$, and a map $\name{\Th} : \Gamma\to V_\Th$ such that $\Th = \Xi[\name{\Th}]$.

  In the case of $\emptyps$, we take $V_\Th = 1$, and $\Xi$ and $\name{\Th}$ are unique.

  In the case of $\typeps{\P}{\al}$, we have inductively $\Th\in\cInst(\P_!)$ along with $V_\Th$, $\Xi$, and $\name{\Th}$, and also $\al:\cInst(\P)\to\D$ over \C and (as the additional data extending $\Th$ to a $\T_!$-instantiation of $\typeps{\P}{\al}_!$) a type $A\in\T_!(\mathrm{dom}(\al(\rho(\Th))))$, determined as usual by $(V_A,E_A,\name{A})$.
  Let $\pi_A : V_{(\Th,A)}\to V_\Th$ be the local exponential 
  \[ (V_\Th \times V_A \to V_\Th)^{\al(\rho(\Xi))} \]
  in $\C/V_\Th$, which exists by~\eqref{eq:lf}.
  Its comes with a universal evaluation map $V_{(\Th,A)} \times_{V_\Th} \al(\rho(\Xi)) \to V_A$.
  But $V_{(\Th,A)} \times_{V_\Th} \al(\rho(\Xi))$ is (isomorphic to) the domain of $\al(\rho(\Xi[\pi_A]))$, so together with $V_A$ and $E_A$ this evaluation map determines a type in $T_!$ over this domain, which is exactly what we need to extend $\Xi[\pi_A]$ to an instantiation of $\typeps{\P}{\al}_!$ over $V_{(\Th,A)}$.
  Finally, the universal property of $V_{(\Th,A)}$ says that $\name{(\Th,A)} : \Gamma \to V_{(\Th,A)}$ can be determined by a map $\Gamma \times_{V_\Th} \al(\rho(\Xi)) \to V_A$, but
  \[\Gamma \times_{V_\Th} \al(\rho(\Xi)) \cong \mathrm{dom}(\al(\rho(\Xi)))[\name{\Th}] \cong \mathrm{dom}(\al(\rho(\Xi[\name{\Th}]))) \cong \mathrm{dom}(\al(\rho(\Th)))  \]
  so $\name{A}$ is exactly such a map.

  In the case of $\termps{\P}{\al}{\be}$, we have again inductively $\Th$, $V_\Th$, $\Xi$, and $\name{\Th}$, and also $\al:\cInst(\P)\to\D$ over \C and $\be:\cInst(\P)\to\D_1$ over $\mathrm{dom}\circ \al$, and (as the additional data extending $\Th$ to a $\T_!$-instantiation of $\termps{\P}{\al}{\be}_!$) a section $a$ of $\be(\rho(\Th))$.
  Let $\pi_a : V_{(\Th,a)} \to V_\Th$ to be the dependent exponential
  \[ {\al(\rho(\Xi))}_*(\be(\rho(\Xi))) \]
  which exists by~\eqref{eq:lf}.
  It comes with a universal evaluation map $V_{(\Th,a)} \times_{V_\Th} \al(\rho(\Xi)) \to \be(\rho(\Xi))$ over $\mathrm{dom}(\al(\rho(\Xi))) = \mathbf{cod}(\be(\rho(\Xi)))$, i.e.\ a section of $\be(\rho(\Xi))[\pi_a]$, which is exactly what we need to extend $\Xi[\pi_a]$ to an instantiation of $\termps{\P}{\al}{\be}_!$ over $V_{(\Th,a)}$.
  Finally, the universal property of $V_{(\Th,a)}$ says that $\name{(\Th,a)} : \Gamma \to V_{(\Th,a)}$ can be determined by a map $\al(\rho(\Xi))[\name{\Th}] \to \be(\rho(\Xi))$, but $\al(\rho(\Xi))[\name{\Th}] \cong \al(\rho(\Xi[\name{\Th}])) \cong \al(\rho(\Th))$ and similarly for $\be(\rho(\Xi))$, so $a$ is itself such a map.
  
  This completes the definition of $V_\Th$, $\Xi\in\cInst(\P_!)_{V_\Th}$, and $\name{\Th} : \Gamma\to V_\Th$ such that $\Th = \Xi[\name{\Th}]$.
  Now let $V_H = V_\Th$ and $\name{H} = \name{\Th}$, and let $E_H\in\T(V_H)$ be a weakly stable typal initial $\S(\Xi)$-algebra, with $\S(\Xi)$-structure $\sH$.
  Then $(V_H,E_H,\name{H})\in\T_!(\Gamma)$ is the specified typal initial $\S(\Th)$-algebra in our intended strictly stable structure for $\T_!$.
  As usual, this is strictly stable by composition of names.

  For the elimination and computation rules, given $C\in \T_!(\Gamma.H)$ determined by $(V_C,E_C,\name{C})$, consider first the object $\Vtil = V_H\triangleleft V_C$ constructed as in~\cite{lw:localuniv}, which classifies maps $\si : \Delta \to V_H$ together with maps $E_H[\si] \to V_C$.
  In particular, we have universal maps $v:\Vtil\to V_H$ and $c:\Vtil.E_H[v]\to V_C$ giving $E_C[c]\in \T(\Vtil.E_H[v])$.

  Now since \S has representable lifts, we have a map $V_{\sH[v],E_C[c]} \to \Vtil$ representing lifts of the \S-structure $\sH[v]$ of $E_H[v]$ to $E_C[c]$.
  Combining universal properties, we see that maps $\Gamma\to V_{\sH[v],E_C[c]}$ correspond to choices of maps $\name{H}:\Gamma\to V_H$ and $\name{C}:\Gamma.E_H[\name{H}]\to V_C$ together with a lift of $\sH[\name{H}]$ to $E_C[\name{C}]$.
  Thus, we can we take $V_{\sH[V],E_C[c]}$ as the local universe for the elimination and computation rules.
  The weak stability of $H$ implies that the universal lift over $V_{\sH[V],E_C[c]}$ has a section that is an \S-morphism, which we can then reindex to $\Gamma$ to obtain a strictly stable eliminator.
\end{proof}

Now we move on to monads with parameters.
For any category \C with pullbacks and $\Gamma\in\C$, let $\cMnd(\C)_\Gamma$ denote the category of fibred monads on $\C/\Gamma$.
A fibred monad can be restricted along the projection $\C/\Delta\to\C/\Gamma$ for any $\sigma:\Delta\to\Gamma$, so these categories assemble into a fibration $\cMnd(\C) \to \C$.

\begin{defn}
  If $(\C,\T)$ is a comprehension category with pullbacks, a \textbf{fibred monad with parameters} on \C is a parameter scheme $\P$ together with a cartesian morphism $\dT:\cInst(\P) \to \cMnd(\C)$ over \C.
\end{defn}

Thus, for any instantiation $\Th$ over $\Gm$, we have an indexed monad $\dT(\Th)$ on $\C/\Gm$, whose fiber over $1_\Gm$ is an ordinary monad $\dT(\Th)_\Gm$ on $\C/\Gm$.
We write $\dT\alg_{\Th}$ for the category of algebras of $\dT(\Th)_\Gm$, each of which has an underlying object of $\C/\Gamma$, i.e.\ of $\C^\to$ over $\Gm$.
These categories vary functorially with respect to cartesian morphisms of $\Th$ (in particular, isomorphisms and reindexing along morphisms in \C), so we have a fibration $\dT\alg \to \C$ with a projection $\dT\alg \to \cInst(\P)\times_\C \C^\to$ over \C.
The following are easy to verify.

\begin{lem}
  For any fibred monad with parameters \dT on a comprehension category \C with pullbacks, $\dT\alg$ is a fibred category of structures with parameters over \C (with the same parameter scheme).\qed
\end{lem}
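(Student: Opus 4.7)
The plan is to verify the two requirements of Definition of ``fibred category of structures with parameters'': first, that the forgetful functor $\dT\alg \to \C$ is a fibration; second, that the projection $\dT\alg \to \cInst(\P)\times_\C \C^\to$ is faithful, amnestic, and an isofibration. Both facts should follow routinely from standard properties of categories of monad algebras, together with the two cartesianness hypotheses built into the definition: that each $\dT(\Th)$ is a \emph{fibred} monad on $\C/\Gm$ (so its algebra structure pulls back along morphisms into $\Gm$), and that $\dT:\cInst(\P)\to\cMnd(\C)$ is \emph{cartesian} over $\C$ (so reindexing the instantiation gives a canonically isomorphic monad).

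For the fibration claim, given $\sigma:\Delta\to\Gm$ and an object $(\Th,X,\xi)$ of $\dT\alg$ over $\Gm$ — consisting of an instantiation $\Th\in\cInst(\P)_\Gm$, a carrier $X\in\C/\Gm$, and a $\dT(\Th)_\Gm$-algebra structure $\xi$ on $X$ — I would construct the cartesian lift in three stages. First, use that $\cInst(\P)\to\C$ is a fibration to produce a cartesian lift $\Th[\sigma]\to\Th$ over $\sigma$; this is built by the evident inductive construction tracking through the pullback squares in Definition of parameter schemes. Second, invoke cartesianness of $\dT$ to identify the fibred monad $\dT(\Th[\sigma])$ on $\C/\Delta$ with $\sigma^*\dT(\Th)$. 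Third, apply the canonical lift of the pullback functor $\sigma^*\colon\C/\Gm\to\C/\Delta$ to algebras, which exists because $\dT(\Th)$ is fibred, to transport $(X,\xi)$ to a $\dT(\Th[\sigma])_\Delta$-algebra on $\sigma^*X$. Universality of this lift reduces to the universality of the pullback of $X$ along $\sigma$ in $\C^\to$, together with the universality of the reindexing of $\Th$.

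For the second claim, faithfulness is immediate: a morphism in $\dT\alg$ is merely a morphism of underlying pairs $(\Th,X)\to(\Th',X')$ subject to an algebra-compatibility condition, so it is determined by its image in $\cInst(\P)\times_\C\C^\to$. Amnesticity follows because if two algebra structures $\xi_1,\xi_2$ on the same carrier become identified by an endo-isomorphism whose underlying carrier map is $1_X$, that isomorphism is by definition an algebra map $1_X:(X,\xi_1)\to(X,\xi_2)$, forcing $\xi_1=\xi_2$. The isofibration property follows by transport: an isomorphism $(\Th,X)\cong(\Th',X')$ over the identity of $\Gm$ in $\cInst(\P)\times_\C\C^\to$ induces, via cartesianness of $\dT$, a monad isomorphism $\dT(\Th)_\Gm\cong\dT(\Th')_\Gm$ and a carrier isomorphism $X\cong X'$, along which any $\dT(\Th)_\Gm$-algebra structure on $X$ transports to a $\dT(\Th')_\Gm$-algebra structure on $X'$.

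There is no real obstacle here; the only mildly fiddly point is the bookkeeping in the fibration step to check that the composite of the three canonical constructions is genuinely cartesian in $\dT\alg$, which amounts to noting that a factorization through the lift in $\dT\alg$ is forced once we have factorizations in $\cInst(\P)$ and $\C^\to$, since preservation of algebra structure is a property of the underlying morphism.
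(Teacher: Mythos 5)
Your verification is correct and is exactly the routine check the paper omits: the lemma is stated with a \qed immediately after the remark ``The following are easy to verify,'' and the preceding paragraph already sketches your fibration argument (the fiber categories $\dT\alg_\Th$ vary functorially with cartesian morphisms of $\Th$, which is your three-stage cartesian lift). Faithfulness, amnesticity, and the isofibration property all come down to the fact that being a $\dT(\Th)$-algebra morphism is a property of the underlying arrow and that algebra structures transport along isomorphisms, just as you say.
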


\begin{lem}
  If \dT is a fibred monad with parameters on a model category \sM, then the full subcategory $\dT\algf$ of $\dT\alg$ consisting of \dT-algebra structures on fibrations over fibrant objects is a fibred category of structures with parameters over \Mf.\qed
\end{lem}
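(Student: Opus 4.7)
The plan is to deduce this directly from the immediately preceding lemma, which establishes that $\dT\alg$ is a fibred category of structures with parameters over $\sM$ (with the same parameter scheme $\P$). Three things need verification: (a) the projection $\dT\algf \to \Mf$ is a fibration; (b) the induced projection $\dT\algf \to \cInst(\P) \times_{\Mf} (\Mf)^\to$ is faithful, amnestic, and an isofibration; and (c) the parameter scheme $\P$ of $\dT$ makes sense as a parameter scheme over $\fibmf$ in the first place, so that the target of this projection is even defined.

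For (a), the key observation is that the defining subclass of $\dT\alg$ is stable under the cartesian reindexing provided by $\dT\alg \to \sM$. Indeed, for any $\sigma : \Gamma \to \Gamma'$ between fibrant objects and any fibration $X \fib \Gamma'$ carrying a $\dT$-algebra structure, the pullback $\sigma^* X$ is again a fibration (fibrations being pullback-stable in any model category), and its codomain $\Gamma$ is fibrant by hypothesis; since $\dT$ is fibred, the cartesian morphism in $\dT\alg$ over $\sigma$ automatically lies in $\dT\algf$. For (b), faithfulness and amnesticity are inherited verbatim from the projection $\dT\alg \to \cInst(\P) \times_\sM \sM^\to$, because $\dT\algf$ is a full subcategory of $\dT\alg$ and these properties are pointwise. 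The isofibration property reduces to the observation that being a fibration with fibrant codomain is invariant under isomorphism, so the existing isofibration lifts in $\dT\alg$ again automatically land in $\dT\algf$.

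For (c), the dependent projections and display maps in the comprehension category $\fibm$ are (iterated) fibrations; restricted to $\Mf$, their codomains are by assumption fibrant and hence so are their domains (a fibration into a fibrant object has fibrant source), so the ambient data used in \cref{defn:param-scheme} are all available inside $\fibmf$. The inductive construction of $\cInst(\P)$ therefore carries over clause by clause, with instantiations over a fibrant base $\Gamma$ assembling into a fibration $\cInst(\P)|_{\Mf} \to \Mf$; this is the target over which $\dT\algf$ is to be fibred.

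I anticipate no substantive obstacle: the argument is essentially a bookkeeping verification that passing from $\sM$ to $\Mf$ and from arbitrary algebras to algebras on fibrations respects every structural property already checked in the preceding lemma. The only mildly delicate point is ensuring that the parameter scheme restricts cleanly, and this follows at once from the two stability facts that fibrations are closed under pullback and that any fibration with fibrant codomain has fibrant domain.
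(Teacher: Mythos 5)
Your verification is correct, and it coincides with what the paper intends: the lemma is stated with its proof omitted as ``easy to verify,'' and the three checks you carry out --- stability of the subclass of fibrations-over-fibrant-objects under cartesian lifts and under isomorphism, inheritance of faithfulness/amnesticity/isofibrancy by a full subcategory, and the clause-by-clause restriction of the parameter scheme to $\Mf$ (using that a fibration with fibrant codomain has fibrant domain) --- are exactly the routine verifications being elided. No gap.
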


\begin{defn}\label{defn:cell-param}
  If \sM is an excellent model category, then a \textbf{(finite) cell monad with parameters} on \sM is a fibred monad with parameters such that each fibred monad $\dT(\Th)$ is a (finite) cell monad on $\sM/\Gamma$.
\end{defn}

\begin{thm}\label{thm:cell-wkstab-param}
  If \dT is a cell monad with parameters on an excellent model category \sM, then \fibmf has weakly stable typal initial $\dT\algf$-algebras.
\end{thm}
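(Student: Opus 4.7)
The plan is to mirror the proof of \cref{thm:cell-wkstab}, only now carrying along the instantiation $\Theta$ of the parameter scheme. For each $\Gamma$ and each $\Theta \in \cInst(\P)_\Gamma$, the fibred monad $\dT(\Theta)$ is, by \cref{defn:cell-param}, a cell monad on $\sM/\Gamma$, so we can form the initial $(\dT(\Theta)_\Gamma + \dR_\Gamma)$-algebra $H \fib \Gamma$. Exactly as before, this is both a fibration (being an $\dR_\Gamma$-algebra) and a $\dT(\Theta)_\Gamma$-algebra, and the usual free-algebra/lifting argument — choosing an $\dR$-structure on any $\dT(\Theta)$-algebra fibration $C \fib H$ and then invoking initiality — shows that $H$ is a typal initial $\dT\algf$-algebra over $(\Gamma, \Theta)$.

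For weak stability, suppose $\sigma:\Delta \to \Gamma$ is arbitrary; we must show that $\sigma^* H$ is a typal initial $\dT\algf$-algebra over $(\Delta, \Theta[\sigma])$. The crucial new ingredient is the fibredness of $\dT$ in its parameters: since $\dT$ is a cartesian morphism $\cInst(\P) \to \cMnd(\C)$ over $\C$, we have coherent natural isomorphisms $\sigma^* \circ \dT(\Theta)_\Gamma \cong \dT(\Theta[\sigma])_\Delta \circ \sigma^*$ compatible with units and multiplications. In other words, $\sigma^*:\sM/\Gamma \to \sM/\Delta$ acts as a strong monad morphism from $\dT(\Theta)_\Gamma$ to $\dT(\Theta[\sigma])_\Delta$, and by doctrinal adjunction the adjunction $\sigma^* \dashv \sigma_*$ lifts to an adjunction between $\dT(\Theta[\sigma])_\Delta$-algebras and $\dT(\Theta)_\Gamma$-algebras — the direct analogue of \cref{thm:fibmnd-radj} in the parameterized setting.

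Having this, we factor $\sigma$ as an acyclic cofibration followed by a fibration and deal with each case as in the proof of \cref{thm:cell-wkstab}. In the fibration case, given a $\dT(\Theta[\sigma])_\Delta$-algebra fibration $C \fib \sigma^* H$, form the $\dT(\Theta)_\Gamma$-algebra fibration $\sigma_* C \fib \sigma_* \sigma^* H$, pull it back along the unit $\eta_H : H \to \sigma_* \sigma^* H$ to get a $\dT(\Theta)_\Gamma$-algebra fibration over $H$, apply the $\dT(\Theta)$-eliminator to obtain a section, and transpose back across $\sigma^* \dashv \sigma_*$. In the acyclic cofibration case, $\sigma^* H \to H$ is an acyclic cofibration between fibrant objects and hence has a retraction $r$; then for $C \fib \sigma^* H$ a $\dT(\Theta[\sigma])_\Delta$-algebra fibration we set $D = r^* C$, use a parameterized version of \cref{thm:cell-cofmnd} to extend the $\dT(\Theta[\sigma])_\Delta$-algebra structure on $C$ to a $\dT(\Theta)_\Gamma$-algebra structure on $D$ such that $D \to H$ is a $\dT(\Theta)_\Gamma$-morphism pulling back to $C \to \sigma^*H$, apply the $\dT(\Theta)$-eliminator to $D$, and pull the resulting section back to $\sigma^* H$.

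The main obstacle is the parameterized version of \cref{thm:cell-cofmnd}: the two monads now live over different bases, but the fibredness isomorphism $\sigma^* \dT(\Theta)_\Gamma \cong \dT(\Theta[\sigma])_\Delta \sigma^*$ translates the original proof verbatim. More precisely, by an induction on the cellular presentation of $\dT$ in the fibred monad category over $\C$ — with algebraic pushouts and transfinite composites preserving the property, as in the unparameterized case — it suffices to treat a single monad cell $\dT^{f,K} \to \dT^{f,L}$, at which point the Beck--Chevalley-type isomorphisms between $(\sigma^* f)_*(\sigma^* B)^*$ and $\sigma^* f_* B^*$ that powered \cref{thm:W-cofmnd,thm:cell-cofmnd} apply without modification, reducing the extension problem to the usual pushout-product of an acyclic cofibration with a cofibration of simplicial sets.
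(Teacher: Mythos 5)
Your proposal is correct and follows essentially the same route as the paper: the paper's entire proof consists of observing that, by cartesianness of $\dT:\cInst(\P)\to\cMnd(\C)$, the monad $\dT(\Theta[\sigma])_\Delta$ coincides with $\dT(\Theta)_\sigma$ (the restriction of the fibred monad $\dT(\Theta)$), and then repeating the proof of \cref{thm:cell-wkstab} for each instantiation. You make the identical key observation (phrased as the strong monad morphism $\sigma^*\dT(\Theta)_\Gamma\cong\dT(\Theta[\sigma])_\Delta\sigma^*$) and simply carry out the repetition explicitly, including the fibration/acyclic-cofibration case split and the appeal to the analogues of \cref{thm:fibmnd-radj,thm:cell-cofmnd}.
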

\begin{proof}
  Note that for any $\sigma:\Delta\to\Gamma$ and any instantiation $\Th$ over $\Gm$, the monad $\dT(\Th[\si])_\De$ (the value of the fibred monad $\dT(\Th[\si])$ on $\C/\Delta$ at $1_\Delta$) coincides with the monad $\dT(\Th)_{\si}$ (the value of the fibred monad $\dT(\Th)$ on $\C/\Gamma$ at $\si$).
  This is because $\dT: \cInst(\P) \to \cMnd(\C)$ preserves cartesian arrows, and definition of the reindexing of a fibred monad and the fact that $(\C/\Gm)/(\si:\De\to\Gm) \simeq \C/\De$.
  Therefore, we can essentially repeat the proof of \cref{thm:cell-wkstab} once for each instantiation.
\end{proof}

\begin{thm}\label{thm:cell-replift-param}
  If \dT is a cell monad with parameters on an excellent model category \sM, then $\dT\algf$ has representable lifts.
\end{thm}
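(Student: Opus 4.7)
The plan is to follow the template of \cref{thm:cell-replift}, adapted parameter-by-parameter. As in the proof of \cref{thm:cell-wkstab-param}, the key observation is that $\dT: \cInst(\P) \to \cMnd(\C)$ is cartesian, so for any instantiation $\Th$ over $\Gm$ and any $\si: \De \to \Gm$, the monad $\dT(\Th[\si])_\De$ coincides with the value $\dT(\Th)_\si$ of the fibred monad $\dT(\Th)$ at $\si$. This means the ``representing lifts'' functor genuinely lives in $\sM/\Gm$, so it suffices to mimic the unparametrized construction once per $\Th$.

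First, I would fix $\Gm$, $A \in \T(\Gm)$, $B \in \T(\Gm.A)$, an instantiation $\Th$ of $\P$ over $\Gm$, and a $\dT(\Th)_\Gm$-algebra structure $\sA$ on $\Gm.A \fib \Gm$. The goal is to exhibit a map $\varpi: V_{\sA,B} \to \Gm$ and a lift over $V_{\sA,B}$ representing the functor sending $\si: \De \to \Gm$ to the set of $\dT(\Th[\si])_\De$-algebra lifts of $\sA[\si]$ to $B[\si]$. As before, to make the induction go through I would prove the stronger claim: for any relative cell monad $\dS \to \dT$ \emph{of fibred monads over $\cInst(\P)$}, given $\Th$ over $\Gm$, fibrations $B \fib A \fib \Gm$ with $A$ a $\dT(\Th)_\Gm$-algebra and $B$ an $\dS(\Th)_\Gm$-algebra making $B \to A$ an $\dS(\Th)_\Gm$-morphism, the representing object for extensions of the $\dS(\Th)$-structure on $B$ to a $\dT(\Th)$-structure compatible with $B \to A$ is a \emph{fibration} over $\Gm$. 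This strengthened property is closed under algebraic pushouts of monad morphisms and under (possibly transfinite) composites thereof, in the latter case by taking the inverse composite of the representing fibrations.

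The base case reduces to monad cells $\dT^{f,K} \to \dT^{f,L}$, where $f: B' \fib A'$ is a fibration of fibrant objects in $\sM/\Gm$ (with the dependence on $\Th$ absorbed into the choice of $f$ via the fibred monad structure) and $K \to L$ is a cofibration of simplicial sets. I would define the representing object exactly as in \cref{thm:cell-replift} by the pullback
\[
\begin{tikzcd}[column sep=large]
  V \ar[r] \ar[d,fib] \pullback{dr} & \left(B^{F^f_\Gm(B)}_\Gm\right)^L_\Gm \ar[d,fib]\\
  \Gm \ar[r] & \left(B^{F^f_\Gm(B)}_\Gm\right)^K_\Gm \times_{\left(A^{F^f_\Gm(B)}_\Gm\right)^K_\Gm} \left(A^{F^f_\Gm(B)}_\Gm\right)^L_\Gm
\end{tikzcd}
\]
in $\sM/\Gm$, where the right-hand map is the pullback-corner map for $K \to L$ and the fibration $B^{F^f_\Gm(B)}_\Gm \fib A^{F^f_\Gm(B)}_\Gm$, hence a fibration; sections of $V \fib \Gm$ (and, by the usual pullback stability of the whole diagram, of $\sigma^*V \fib \De$) correspond to the required dotted extensions.

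The main obstacle, and the reason for insisting on the ``simplicially locally cartesian closed'' clause in the definition of good model category, is that this entire diagram must be preserved under pullback along arbitrary $\si: \De \to \Gm$, including the copowers $(-) \otimes K$ and $(-) \otimes L$ and the local exponentials $(-)^{F^f_\Gm(B)}_\Gm$. Since \sM is locally cartesian closed as a simplicial category, both dependent exponentials and simplicial copowers commute with $\si^*$, so the pullback $\si^* V$ is again the analogous representing object over $\De$, and hence $\si$-indexed sections correspond bijectively to factorizations of $\si$ through $V$. This yields the universal property of representing lifts; the fibrancy of $V \fib \Gm$ then passes through the inductive constructions above.
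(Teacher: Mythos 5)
Your proof is correct and follows essentially the same route as the paper: the paper simply observes that, by inspecting \cref{defn:replift,defn:replift-param}, $\dT\algf$ has representable lifts exactly when each fibred monad $\dT(\Th)$ on $\sM/\Gamma$ does, and then cites \cref{thm:cell-replift}, which is precisely the per-instantiation reduction you make before re-running that construction. (One small remark: \cref{defn:cell-param} imposes no compatibility between the cell structures of the various $\dT(\Th)$, so the induction should be phrased for each fixed $\Th$ separately rather than over a relative cell monad ``of fibred monads over $\cInst(\P)$''---which is what your argument in effect does anyway.)
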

\begin{proof}
  Inspecting \cref{defn:replift,defn:replift-param}, we see that \dT has representable lifts just when each fibred monad $\dT(\Th)$ on $\C/\Gamma$ does.
  Thus, this follows directly from \cref{thm:cell-replift}.
\end{proof}

\begin{cor}\label{thm:cellmndparam}
  If \sM is an excellent model category and \dT is a cell monad with parameters on it, then $\fibmfbang$ has strictly stable typal initial $\dT\algf$-algebras.\qed
\end{cor}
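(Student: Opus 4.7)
The plan is to assemble this corollary directly from the three preceding results, in the same pattern as the earlier specialized cases (coproducts, pushouts, natural numbers, \W-types, propositional truncation). Specifically, I would invoke \cref{thm:lu-str-param} applied to the comprehension category $\fibmf$ and the fibred category of structures with parameters $\dT\algf$; the hypotheses of that theorem are exactly what the other two theorems of \cref{sec:cell-monads-param} supply.

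Concretely, first I would verify that $\fibmf$ satisfies~\eqref{eq:lf}. This was already observed in \cref{sec:natural-numbers} (the product of fibrant objects is fibrant, and the local-cartesian-closure of the good model category $\sM$ ensures that dependent exponentials of fibrations along fibrations are fibrations, hence dependent exponentials of display maps of $\fibmf$ along display maps again land in $\fibmf$). Second, \cref{thm:cell-wkstab-param} provides weakly stable typal initial $\dT\algf$-algebras. Third, \cref{thm:cell-replift-param} provides representable lifts for $\dT\algf$. These are precisely the three hypotheses of \cref{thm:lu-str-param}, so the theorem's conclusion gives strictly stable typal initial $(\dT\algf)_!$-algebras on $\fibmfbang$.

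The only small subtlety worth a sentence is to note that the $_!$-splitting of the fibred category of structures with parameters $\dT\algf$ is the one that \cref{thm:lu-str-param} produces, and that this matches the intended notion of ``$\dT\algf$-algebra'' used when we apply the corollary to concrete higher inductive types: reindexing of instantiations in $\P_!$ recovers the pullback behaviour of instantiations in $\P$ via the cartesian functor $\rho$, and the representable-lifts construction at each instantiation produces the appropriate local universe.

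I do not expect a genuine obstacle here; the real work has been done in \cref{thm:lu-str-param,thm:cell-wkstab-param,thm:cell-replift-param}, and the corollary is a one-line combination. If anything, the closest thing to a pitfall is being careful that the ``cofibrancy'' of the cell monad is used in \emph{both} of the two theorems being combined --- it enters weak stability through the algebraic-pushout argument of \cref{thm:cell-cofmnd} and enters representability through the transfinite inverse limit of fibrations in \cref{thm:cell-replift} --- so that one does not accidentally weaken a hypothesis when quoting the corollary downstream.
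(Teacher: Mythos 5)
Your proposal is correct and matches the paper exactly: the corollary is left as an immediate combination of \cref{thm:cell-wkstab-param}, \cref{thm:cell-replift-param}, the observation that \fibmf satisfies~\eqref{eq:lf}, and the coherence theorem \cref{thm:lu-str-param}, which is precisely the assembly you describe (and is how the paper itself restates the argument in \cref{sec:summary}). Your remark identifying the strictly stable structure as living over the $_!$-translated parameter scheme via $\rho$ is a point the paper elides but is consistent with its intent.
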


The proofs of \cref{thm:cell-wkstab-param,thm:cell-replift-param} may seem almost too slick, as if something is being swept under the rug.
In particular, note that in \cref{defn:cell-param} we did not assume any compatibility between the \emph{cell structures} of the monads $\dT(\Th)$!
However, we did assume that the resulting monads fit together into a cartesian morphism from instantiations to \emph{fibred} monads, which implies that \emph{however} the parameters are being used in the construction of the monads $\dT(\Th)$ \emph{must} be stable under reindexing, and this is all that is required for \cref{thm:cell-wkstab-param,thm:cell-replift-param}.
In fact, the internals of $\cInst(\P)$ are only needed to prove the local universes coherence theorem, \cref{thm:lu-str-param}; for \cref{thm:cell-wkstab-param,thm:cell-replift-param}, $\cInst(\P)$ could just as well be \emph{any} fibration over $\C$.

Although \cref{thm:cellmndparam} is very general, unlike our results in \crefrange{sec:pushouts-type-theory}{sec:prop-trunc} it requires more work to extract from \cref{thm:cellmndparam} a statement about models of type theory.
Specifically, in order to apply it to any particular higher inductive type $\iH$, we have to do the following.
\begin{enumerate}
\item Manually translate the parameters of $\iH$ to a parameter scheme (\cref{defn:param-scheme}).
  This is practically algorithmic along the lines discussed above: for each parameter, $\al$ corresponds to the extended context of that parameter, and in the case of a term parameter, $\be$ corresponds to the type of that parameter.
  It's just that we don't have a general theorem that does the work for us.
\item Manually construct a cell monad \dT (with parameters) from the constructors of $\iH$.
  Although it may not be obvious from the examples we have seen so far, this is also pretty close to algorithmic, although there are certain limits on the constructors we can handle.
  We will explain in more detail below.\label{item:hit-mnd}
\item Manually generalize the notions of weakly and strictly stable typal initial $\dT\algf$-algebras to refer to stable classes of identity types, dependent identity types, and identity applications rather than the simplicial structure of $\sM$, and prove an analogue of \cref{thm:transfer-depid}.
  This is also practically algorithmic: transpose the left homotopies to right homotopies, replace the simplicial path objects by the type-theoretic structures, and transfer by composing with the equivalences $h$ and $k$ from \cref{thm:transfer-depid}.
\end{enumerate}
Ideally, of course, we would like a general correspondence between syntax and semantics that does all this work for us.
This would entail giving a precise \emph{syntactic} definition of what a ``higher inductive type'' is, which is already a significant task.\footnote{The approach of~\cite{acdf:qiits} is promising, but inapplicable as it stands to homotopy type theory, due to the unsolved problem of defining $(\infty,1)$-categories and functors internally in type theory.}
We leave it for future work.

The construction of a cell monad from a list of higher inductive constructors (item~\ref{item:hit-mnd}) goes roughly as follows.
Each constructor of $\iH$ corresponds to a \emph{cell} of the monad \dT.
The domain of that constructor, which we require to be ``strictly positive'' in $\iH$, determines the polynomial functor $F_{\W A B}$ in the same way as for ordinary inductive types.
(It may be possible to generalize further, for instance by allowing identity types of $\iH$ to appear in such domains, but we ignore that possibility for now.)
The ``dimension'' of the constructor determines the simplicial cofibration $K\to L$: ordinary point-constructors use the cofibration $\emptyset\to 1$, 1-dimensional path-constructors use $\mathbf{2}\to\ivl$, 2-path constructors use the ``inclusion of the boundary of a 2-globe'', and so on.
Finally, in the case of path-constructors, the source and target of the path determine the ``attaching map'' of the cell.
This is all made clearer by a discussion of examples.

\begin{eg} \label{eg:ordinary-inductives}
  Ordinary inductive types are the special case of higher inductive types that have no path-constructors.
  In this case, each cell is of the form $\emptyset \to \dT_{\W A B}$, so instead of a ``cell complex'' we are really just building a \emph{coproduct} of the monads $\dT_{\W A B}$ corresponding to each constructor.
  In the case of only one constructor, we just get $\dT_{\W A B}$ itself, as we used in \cref{sec:w-types}.

  An example with two constructors is the natural numbers, for which the constructor domains yield the maps $\emptyset\to 1$ and $1\to 1$ respectively.
  The former yields the monad for pointed objects, the latter the monad for objects with an endomorphism; thus their algebraic coproduct is the monad for pointed objects with an endomorphism, the same one we used in \cref{sec:natural-numbers}.
\end{eg}

\begin{eg}\label{eg:glob-cx}
  Any cell complex construction of a simplicial set $K$ using ``globular'' cells can be translated into a cell monad for a higher inductive version of $K$.
  For instance, the circle has a 0-dimensional cell and a 1-dimensional cell:
  \[
  \begin{tikzcd}
    \emptyset \ar[rrr] &&& 1 \ar[rrr] &&& S^1 \\
    & \emptyset \ar[ul] \ar[r] \ar[urr,phantom,near end,"\llcorner"] & 1 \ar[ur] &
    & \mathbf{2} \ar[ul] \ar[r] \ar[urr,phantom,near end,"\llcorner"] & \ivl \ar[ur]
  \end{tikzcd}
  \]
  If we take simplicial copowers of this diagram with the constant endofunctor at $1\in \sM$, which is $F_{\W 1 \emptyset}$, we get a ``cell endofunctor''; and then taking free monads we obtain a cell monad.
  The corresponding higher inductive type is the usual $S^1$.
  Similarly, we can construct the sphere $S^2$, and so on.
  In such cases, of course, the technology of cell monads is overkill; we can simply take the simplicial copower $1\otimes K$ and fibrantly replace it, arguing as in \crefrange{sec:pushouts}{sec:pushouts-type-theory}.
\end{eg}

\begin{eg} \label{eg:non-rec}
  More general non-recursive higher inductive types are constructed in a similar way with nontrivial endofunctors.
  For instance, given $f_i : A \to B_i$ as in \crefrange{sec:pushouts}{sec:pushouts-type-theory}, we have a cell endofunctor
  \[ \mathclap{
  \begin{tikzcd}[ampersand replacement=\&]
    \emptyset \ar[rrr] \&\&\& F^{\emptyset\to(B_1+B_2)} \ar[rrr] \&\&\& F \\
    \& \emptyset \ar[ul] \ar[r] \ar[urr,phantom,near end,"\llcorner"] \& F^{\emptyset \to (B_1+B_2)} \ar[ur] \&
    \& F^{\emptyset\to A} \otimes \mathbf{2} \ar[ul,"{[\inl\circ f_1,\inr\circ f_2]}"] \ar[r] \ar[urr,phantom,near end,"\llcorner"] \& F^{\emptyset\to A} \otimes \ivl \ar[ur]
  \end{tikzcd}
  } \]
  Note that the endofunctor $F^{\emptyset\to B_1+B_2}$ is constant at $B_1+B_2$, while $F^{\emptyset\to A}\otimes \mathbf{2}$ is constant at $A\otimes \mathbf{2} \cong A+A$.
  Thus the map marked $[\inl\circ f_1,\inr\circ f_2]$ really is just such a copairing.
  Taking free monads then yields a cell monad whose higher inductive type is the pushout of $f_1$ and $f_2$.

  This construction applies to a \emph{particular} $A,B_i,f_i$ in some slice $\sM/\Gamma$; in the general case we construct a parameter scheme as follows.
  First there are three type parameters for which $\al:\cInst(\P)\to\D$ is fiberwise constant at $1_\Gamma$ (i.e.\ $A,B_1,B_2$ are all in the same arbitrary context $\Gamma$).
  This yields a parameter scheme $\P_3$ whose instantiations consist of three types $A,B_1,B_2$ in the same context.
  Then we have two term parameters for which $\al(A,B_1,B_2) = (\Gamma.A\to \Gamma)$ and for which $\be(A,B_1,B_2)$ is $\Gamma.A.B_1[A]\to \Gamma.A$ and $\Gamma.A.B_2[A]\to \Gamma.A$ respectively.
  An instantiation of the resulting parameter scheme thus consists of three types $A,B_1,B_2$ and morphisms $A\to B_1$ and $A\to B_2$ over $\Gamma$.

  We then need to verify that this construction is stable, up to isomorphism, under reindexing of these data when performed in arbitrary slices.
  This is always obvious, so subsequently we omit to mention it.
\end{eg}

\begin{eg} \label{eg:prop-trunc}
  The propositional truncation (\cref{sec:prop-trunc}) expressed as a higher inductive type has two constructors:
  \begin{itemize}
  \item $\tr:A \to\brck{A}$
  \item $\treq : \prod_{x,y:\brck A} \id[\brck A]{x}{y}$
  \end{itemize}
  The only parameter is $A$, so the parameter scheme has one type parameter with $\al$ fiberwise constant at $1_\Gamma$.
  Again we have a cell monad with two cells, but now it no longer arises from a cell endofunctor.
  \[
  \begin{tikzcd}
    \emptyset \ar[rrr] &&& \dT_1 \ar[rrr] &&& \dT_2 \\
    & \emptyset \ar[ul] \ar[r] \ar[urr,phantom,near end,"\llcorner"] & F^{\emptyset\to A} \ar[ur] &
    & F^{2\to 1} \otimes \mathbf{2} \ar[ul] \ar[r] \ar[urr,phantom,near end,"\llcorner"] & F^{2\to 1} \otimes \ivl \ar[ur]
  \end{tikzcd}
  \]
  By definition we have $F^{2\to 1}(X) = X\times X$, while it is easy to verify that $\dT_1(X) = X+A$ (the free monad on the constant endofunctor at $A$, whose algebras are objects under $A$).
  The attaching map
  \[F^{2\to 1}(X)\otimes \mathbf{2} \cong (X\times X) + (X\times X) \to \dT_0(X) = X+A\]
  consists of the first and second product projections, injected into $X$.
  This is the semantic version of the syntax $\prod_{x,y:\brck A} \id[\brck A]{x}{y}$, which says that we have two copies of the type being defined in the domain ($X\times X$) and the source and target of the path ($\id[\brck A]{x}{y}$) come from the first and the second copies respectively.
  Note that this map does not land in $F^{\emptyset\to A}$, only in the free monad it generates.
\end{eg}

In general, when describing the attaching maps of a cell monad it is useful to know that monad morphisms $\dS\to\dT$ are equivalent to functors $\dT\alg\to\dS\alg$ over \sM, i.e.\ natural ways to assign an \dS-algebra structure to any \dT-algebra.
In particular, if \dS is algebraically-free on an endofunctor $F$, then such a morphism is determined by a natural way to assign an $F$-endofunctor-algebra structure to any \dT-algebra.

\begin{eg} \label{eg:james}
  The \textbf{James construction} $JA$ of a type $A$ equipped with a point $\star : A$, as defined in~\cite{brunerie:thesis}, has three constructors:
  \begin{itemize}
  \item $\varepsilon : JA$
  \item $\alpha : A \to JA \to JA$
  \item $\delta : \prod_{x:JA} \id[JA]{x}{\alpha(\star,x)}$
  \end{itemize}
  The parameters are $A$ and $\star$, so our parameter scheme has one type parameter with $\al$ fiberwise constant at $1_\Gamma$, and one term parameter with $\al$ again constant at $1_\Gamma$ while $\be(A) = (\Gamma.A\to \Gamma)$.

  The first cell is $\emptyset \to F^{0\to 1}$, yielding a monad whose algebras are pointed objects.
  The second cell is $\emptyset \to F^{A\to A}$, yielding a monad whose algebras are objects with an ``$A$-indexed family of endomorphisms'' $A\times X\to X$.

  The third cell is $F^{1\to 1}\otimes \mathbf{2}\to F^{1\to 1}\otimes \ivl$.
  Since $F^{1\to 1}(X) = X$, an endofunctor-algebra for $F^{1\to 1}\otimes \mathbf{2}$ is an object equipped with two endomorphisms.
  The nontrivial attaching map $F^{1\to 1}\otimes \mathbf{2} \to \dT_2$ therefore consists of a natural way to assign two endomorphisms to a pointed object equipped with a map $\alpha:A\times X\to X$.
  The syntax tells us that first endomorphism is the identity $1_X$, while the second is $\alpha \circ (\star,1_X)$.
  We leave it to the reader to translate from typal initial algebras to dependent identity types as in \cref{thm:transfer-depid}.
\end{eg}

\begin{eg} \label{eg:localization}
  Given a type $I$, two types $i:I\types S_i\type$ and $i:I\types T_i\type$, and a family of functions $i:I,s:S_i \types f_i(s) : T_i$, corresponding to a map $f:S\to T$ in $\sM/I$, an object $X\in \sM$ is \textbf{internally $f$-local} if the induced precomposition map $(I^*X)^T \to (I^*X)^S$ in $\sM/I$ is an equivalence.
  The \textbf{$f$-localization} of a type $A$ can be defined as a higher inductive type with the following five constructors:
  \begin{itemize}
  \item $\mathsf{loc}:A\to L_f A$
  \item $\mathsf{ext}:\prod_{i:I} \prod_{g:S_i \to L_f A} T_i \to L_f A$
  \item $\mathsf{ext}':\prod_{i:I} \prod_{g:S_i \to L_f A} T_i \to L_f A$
  \item $\mathsf{rtr}:\prod_{i:I} \prod_{g:S_i \to L_f A} \prod_{s:S_i} \id[L_f A]{g(s)}{\mathsf{ext}(i,g,f_i(s))}$
  \item $\mathsf{rtr}':\prod_{i:I} \prod_{h:T_i \to L_f A} \prod_{t:T_i} \id[L_f A]{h(s)}{\mathsf{ext}'(i,h\circ f_i,t)}$
  \end{itemize}
  The first constructor $\mathsf{loc}$ equips $L_f A$ with a map from $A$, while the other four ensure that $L_f A$ is internally $f$-local, being essentially a reorganization of the ``bi-invertibility'' definition of equivalence from~\cite[Chapter 4]{hottbook}: $\mathsf{ext}$ and $\mathsf{ext}'$ give two maps in the opposite direction, while $\mathsf{rtr}$ and $\mathsf{rtr}'$ ensure that they are a section and a retraction of the precomposition map.
  (In~\cite{rss:modalities} localizations are constructed instead using pushouts and an auxilary higher inductive type $\mathcal{J}_f A$, which is just $L_f A$ with $\mathsf{ext}'$ and $\mathsf{rtr}'$ omitted.)

  The parameter scheme for $L_f A$ has one type parameter with $\al$ constant at $1_\Gamma$, two more type parameters with $\al(I) = (\Gamma.I\to\Gamma)$, and one term parameter with $\al(I,S,T) = (\Gamma.I.S\to \Gamma.I\to\Gamma)$ and $\be(I,S,T) = (\Gamma.I.S.T[S] \to \Gamma.I.S)$.
  The first cell is $\emptyset \to F^{\emptyset\to A}$.
  The second and third cells are both $\emptyset \to F^{T^*S \to T}$, where $T^*S$ denotes the pullback of $S\to I$ along $T\to I$.
  Finally, the fourth and fifth cells are $F^{S^*S\to S} \otimes (\mathbf{2}\to \ivl)$ and $F^{T^*T\to T} \otimes (\mathbf{2}\to \ivl)$, while the attaching maps can be obtained by interpreting the terms $g(s)$, ${\mathsf{ext}(i,g,f_i(s))}$, $h(s)$, and ${\mathsf{ext}'(i,h\circ f_i,t)}$ in the internal \emph{extensional} type theory of the locally cartesian closed category \sM.
\end{eg}

Hopefully these examples are convincing of the generality of our construction and the relative straightforwardness of its application to individual examples.
It does, however, have the following notable limitation: because the ``attaching map'' $F^{f,K}\to \dT_n$ must be a map of fibred monads, it must exist \emph{before} we take the monad coproduct with $\dR$, and therefore in defining it we cannot assume that $\dT_n(X)$ is fibrant.
Thus, the source and target terms of a higher constructor cannot use operations that only exist in fibrant types, such as path-concatenation and eliminators of other inductive types.

For instance, in the torus constructor $\mathsf{sq} : \id[{\id[T^2]{\mathsf{base}}{\mathsf{base}}}]{\mathsf{left}\ct\mathsf{right}}{\mathsf{right}\ct\mathsf{left}}$, the source and target terms involve path-concatenation, so we cannot deal with this constructor directly.
Reducing it to a 1-constructor using the ``hub-and-spoke'' method~\cite[\sec6.7]{hottbook} does not help, since the ``rim'' of the wheel, and hence the outer boundaries of the spokes, are defined using the eliminator of $S^1$, which is also only defined when the target is a (fibrant) type.

However, in this case (and many others) there is a different workaround.
As the cofibration $K\to L$ for the monad cell corresponding to $\mathsf{sq}$, we choose the inclusion $\partial(\ivl\times\ivl) \to \ivl\times\ivl$ of the boundary of the simplicial square.
The non-fibrant monad $\dT_2$ generated by the first three constructors does have simplicial paths $\mathsf{left}$ and $\mathsf{right}$, which we can glue together in the appropriate ways to get a map from $\partial(\ivl\times\ivl)$.
Then our new monad cell glues in a simplicial square with the appropriate boundary, and when we then fibrantly replace it we can deduce from this a ``globular'' 2-cell relating the two boundary composites.
We leave the details to the interested reader.

In particular, any simplicial set $K$ with finitely many nondegenerate simplices can be constructed as a finite complex of ``simplicial cells'' involving the boundary cofibrations $\partial\Delta^n \to \Delta^n$.
Thus, this method yields a higher inductive type representing $K$; although as in \cref{eg:glob-cx} such a simple example can be obtained more easily as a fibrant replacement of $1\otimes K$.

The general workaround applies also to many other examples, but is not universally applicable.
In particular, it does not apply to constructor~\ref{item:f4} from \cref{sec:blass}, which uses \N-induction to define the source and target!
However, this problem goes away when the 1-categorical natural numbers object happens to be fibrant, as in simplicial sets or sets, since then the natural numbers type is just the natural numbers object and so can eliminate into arbitrary objects.
So at least in such cases our construction does apply to yield the \iF that is unachievable within ZF.

\section{Summary}
\label{sec:summary}

Through the course of the paper, we have accumulated a large amount of machinery.
For reference, therefore, we give here a self-contained summary of what we have obtained, and what it yields specifically for the semantics of higher inductive types.

\begin{thm}
  Let $\sM$ be an excellent model category: that is, simplicial, combinatorial, right proper, simplicially locally cartesian closed, and with all monos cofibrations, and cofibrations stable under limits.
  Let $\dT$ be a cell monad with parameters on $\sM$.
  Then:
  \begin{enumerate}
  \item The comprehension category \fibmf of fibrant objects of \sM and fibrations of \sM has weakly stable typal initial $\dT\algf$-algebras (\cref{thm:cell-wkstab-param}).
  \item $\dT\algf$ has representable lifts (\cref{thm:cell-replift-param}), and \Mf satisfies~\eqref{eq:lf}.
  \item Hence by \cref{thm:lu-str-param}, its local universe splitting $\fibmfbang$ has strictly stable typal initial $\dT\algf$-algebras (\cref{thm:cellmndparam}).
  \end{enumerate}
\end{thm}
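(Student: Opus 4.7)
The plan is essentially bookkeeping: each clause of the summary theorem is a direct citation of work already done in \crefrange{sec:cell-monads}{sec:cell-monads-param}, so the proof collapses to assembling the three ingredients and checking that their hypotheses match.

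First I would dispatch clause~(1) by invoking \cref{thm:cell-wkstab-param} verbatim; the hypothesis there is exactly that $\sM$ is an excellent model category and $\dT$ a cell monad with parameters, which is the standing assumption of the summary theorem. For clause~(2), the representable-lifts half is immediate from \cref{thm:cell-replift-param}. The condition~\eqref{eq:lf} for $\fibmf$ is the earlier lemma immediately following the definition of $\fibmf$: since $\sM$ is simplicially locally cartesian closed, dependent exponentials of fibrations along fibrations are again fibrations, so their domains are fibrant whenever their codomains are, and finite products of fibrant objects are fibrant; hence $\Mf$ has finite products and the dependent exponentials demanded by~\eqref{eq:lf}. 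I would cite that lemma rather than reprove it.

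Finally, clause~(3) follows by feeding clauses~(1) and~(2) into the general local-universes coherence result, \cref{thm:lu-str-param}, applied to the fibred category of structures with parameters $\dT\algf$ over the comprehension category $\fibmf$. The output is strictly stable typal initial $(\dT\algf)_!$-algebras on $\fibmfbang$; one then observes that the parameter-scheme pullback $(\dT\algf)_!$ is by construction what one means by ``$\dT\algf$-algebras on $\fibmfbang$'', so no further identification is needed. This composite statement is already recorded as \cref{thm:cellmndparam}, so the proof closes with a reference to it.

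The only step requiring any thought, rather than pure citation, is confirming that the hypotheses of \cref{thm:lu-str-param} really are all in place: namely that $\dT\algf$ is indeed a fibred category of structures with parameters over $\fibmf$ (this is the lemma immediately preceding \cref{defn:cell-param}), that $\fibmf$ satisfies~\eqref{eq:lf} (above), and that the weakly stable typal initial algebras and representable lifts asserted by clauses~(1) and~(2) agree with the notions appearing in \cref{thm:lu-str-param}. All three checks are immediate from the way $\dT\algf$ and the relevant structures were defined in \cref{sec:cell-monads-param}, so there is no genuine obstacle; the summary theorem is a restatement rather than a new result.
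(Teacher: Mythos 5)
Your proposal is correct and matches the paper exactly: this summary theorem has no separate proof in the paper beyond the citations embedded in its statement, and your assembly of \cref{thm:cell-wkstab-param}, \cref{thm:cell-replift-param}, the lemma that $\fibmf$ satisfies~\eqref{eq:lf}, and \cref{thm:lu-str-param} (as packaged in \cref{thm:cellmndparam}) is precisely the intended argument. Your hypothesis-checking, including the observation that the local-universes machinery actually produces $(\dT\algf)_!$-algebras which are then identified with the stated structure, is the only non-citation content and is handled correctly.
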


From this general result along with \cref{eg:non-rec,eg:prop-trunc,eg:james,eg:localization}, and/or by the warm-up cases treated individually in \cref{sec:coproducts,sec:pushouts,sec:coherence-pushouts,sec:pushouts-type-theory,sec:natural-numbers,sec:w-types,sec:prop-trunc}, we obtain:
\begin{thm}
  Suppose $\sM$ an excellent model category.  
  Then the comprehension category \fibmf has weakly stable structure, and its local universes splitting $\fibmfbang$ has strictly stable structure, for the following type formers:
  \begin{enumerate}
  \item coproduct types;
  \item pushout types;
  \item a natural numbers type;
  \item $\W$-types;
  \item propositional truncations;
  \item James constructions;
  \item localizations;
  \item a torus type.
  \end{enumerate}
\end{thm}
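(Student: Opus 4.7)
The proof plan is essentially to collect and combine the results already established in the paper, using the general cell-monad machinery of \cref{sec:cell-monads,sec:cell-monads-param} for the recursive higher inductive examples and the direct constructions from earlier sections for the warm-up cases. For items (i)--(v), I would simply cite the corollaries proved in \cref{sec:coproducts,sec:pushouts-type-theory,sec:natural-numbers,sec:w-types,sec:prop-trunc} respectively; each of those sections establishes exactly a weakly stable structure on \fibmf and then deduces strict stability on \fibmfbang through local universes. No new work is required.

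For items (vi)--(viii), I would invoke the general framework. The plan is: first, specialize \cref{thm:cell-wkstab-param,thm:cell-replift-param,thm:cellmndparam} to each example. The James construction and localizations are addressed directly in \cref{eg:james,eg:localization}; in each case the parameter scheme is constructed explicitly (one or more type parameters plus term parameters encoding the specified functions), the cell monad is described cell-by-cell from the list of constructors, and the attaching maps for the path-constructors are given by reading off the source and target terms interior to the non-fibrant monad $\dT_n$. Once these data are in place, \cref{thm:cellmndparam} provides weakly stable typal initial $\dT\algf$-algebras in \fibmf and hence strictly stable ones in \fibmfbang.

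For (viii), the torus, the main subtlety is the one discussed at the end of \cref{sec:cell-monads-param}: the 2-constructor $\mathsf{sq}$ involves path-concatenation, which is fibrant structure and cannot be used directly inside an attaching map. The plan here is to adopt the workaround sketched in that section, namely to use the simplicial cofibration $\partial(\ivl\times\ivl)\to\ivl\times\ivl$ rather than the globular 2-cell $\mathbf{2}\otimes\mathbf{2}\to\ivl\otimes\ivl$ as the shape of the monad cell. The boundary map from $\partial(\ivl\times\ivl)$ is glued together from the simplicial paths corresponding to $\mathsf{left}$ and $\mathsf{right}$ that already exist in the non-fibrant monad generated by the point- and 1-path-constructors; once the square is glued in, fibrant replacement produces the expected globular 2-cell between the two rim composites. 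The parameter scheme here is empty.

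The only mildly non-routine step will be spelling out, for each of (vi)--(viii), the translation of the syntactic elimination rule (with its native type-theoretic dependent identity types and $\ap$) into a statement about typal initial $\dT\algf$-algebras in the sense of \cref{sec:cell-monads-param}, which uses the canonical simplicial path objects. As explained at the end of \cref{sec:cell-monads-param}, this translation is essentially algorithmic via an analogue of \cref{thm:transfer-depid}: transpose left homotopies to right homotopies, replace simplicial path objects by the chosen type-theoretic identity types, and transfer eliminator data across the resulting equivalence, accepting the weakening of the path-computation rule from a judgmental equality to a typal one. I expect this bookkeeping, rather than any genuine new construction, to be the main labor. With that done, the theorem follows by assembling the eight separate cases.
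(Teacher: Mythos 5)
Your proposal is correct and follows essentially the same route as the paper: the paper likewise obtains items (i)--(v) by citing the corollaries of the warm-up sections (noting that weakly stable structure on $\fibm$ restricts to $\fibmf$), and items (vi)--(viii) by instantiating the general cell-monad-with-parameters theorem at the examples worked out in \cref{sec:cell-monads-param}, including the $\partial(\ivl\times\ivl)\to\ivl\times\ivl$ workaround for the torus and the manual, essentially algorithmic transfer of dependent identity types and $\ap$ along the lines of \cref{thm:transfer-depid}. No substantive difference.
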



Moreover, we expect cell monads with parameters to suffice for obtaining many other higher inductive types, under the limitation that the constructors do not involve ``fibrant structure'', i.e.\ eliminations into the higher inductive type itself.

Finally, we recall that the ``excellent'' model categories to which these results apply include sets and simplicial sets (with their standard model structures); and more generally all locally presentable locally cartesian closed 1-categories with a trivial model structure, and all right proper, simplicially locally cartesian closed, simplicial Cisinski model categories, which suffice to present all locally presentable locally cartesian closed $(\infty,1)$-categories, including all Grothendieck $(\infty,1)$-toposes.
 
\bibliographystyle{alphamod}
\bibliography{all}

\end{document}

